\newtheorem{proposition}{Proposition}
\newtheorem{theorem}{Theorem}
\newtheorem{lemma}{Lemma}
\def\N{{\mathbb N}}
\def\R{{\mathbb R}}
\def\Z{{\mathbb Z}}
\def\P{{\mathbb P}}
\def\E{{\mathbb E}}
\def\cal{\mathcal}
\newcommand\ind[1]{\mathbbm{1}_{\{#1\}}}
\newcommand\alpetu[1]{{\alpha^*_{#1}}}
\def\eps{\varepsilon}
\def\e{\varepsilon}
\def\diff{{\rm{d}}}
\title[]{A Scaling Analysis of a Star Network with Logarithmic Weights}
\author{Philippe Robert}
\address[Ph. Robert]{INRIA Paris \\2 rue Simone Iff, CS 42112, 75589 Paris Cedex 12, France.}
\email{Philippe.Robert@inria.fr}
\author{Amandine V\'eber}
\address[A. V\'eber]{CMAP, École Polytechnique
\\ Route de Saclay, 91128 Palaiseau Cedex, France}
\email{Amandine.Veber@cmap.polytechnique.fr}
\subjclass[2010]{Primary:60K25, 60K30, 60F05; Secondary:68M20, 90B22 }
\date{\today}
\begin{document}

\begin{abstract}
The paper investigates the properties of a class of resource allocation algorithms for communication networks: if a node of this network has $x$ requests to transmit, then it receives a fraction of the capacity proportional to $\log(1{+}L)$, the logarithm of its current load $L$. A stochastic model of such an algorithm is investigated in the case of the star network, in which $J$ nodes can transmit simultaneously, but interfere with a  central node $0$ in such a way that node $0$ cannot transmit while one of the other nodes does. One studies the impact of the  log policy on these $J+1$ interacting communication nodes.  A  fluid scaling  analysis of the network is derived with the scaling parameter $N$ being  the norm of the initial state. It is shown that the asymptotic fluid behaviour of the system is a consequence of the evolution of the state of the network on a specific time scale $(N^t,\, t{\in}(0,1))$.  The main result is that, on this time scale and under appropriate conditions, the state of a  node with index $j\geq 1$  is of the order of $N^{a_j(t)}$, with $0{\leq}a_j(t){<}1$, where $t\mapsto a_j(t)$ is a piecewise linear function. Convergence results on the fluid time scale  and a stability property are derived as a consequence of this study.
\end{abstract}

\maketitle

\bigskip

\hrule

\vspace{-3mm}

\tableofcontents

\vspace{-1cm}

\hrule

\bigskip

\section{Introduction}
This paper is an extension of the study of algorithms of resource allocation with logarithmic weights started in Robert and V\'eber~\cite{RV}.  For the architectures of communication networks considered, in a wireless context for example,  if two nodes of this network are too close then,  because of interferences, they cannot use the local communication channel at the same time.  For this reason an algorithm has to be designed so that nodes can share the channel in a distributed way in order to transmit their messages. A natural class of algorithms in this setting are random access protocols: A given node waits for  some random duration of time before  transmission. If  the channel is free at that time then it starts  transmitting. Otherwise, if the channel is busy because a communication is already underway in the neighborhood, then the node waits for another random amount of time. For the algorithms investigated in this paper,  the waiting time is exponentially distributed with a rate proportional to the logarithm of the load of the node, i.e., of the form $K\log(1{+}L)$, where $L$ is the number of pending requests at this node and $K$ is some constant. These algorithms are now quite popular, see Shah and Wischik~\cite{Shah}, Bouman et al.~\cite{BBLP} and Ghaderi et al.~\cite{GBW}. They have nice properties in terms of fairness and efficiency. See~\cite{RV} for a discussion of their use in communication networks.

\subsection*{Interaction of Communication Channels}
The results obtained in our previous work~\cite{RV} mainly deal with a network with two nodes. In this case, there is a single communication channel which can be used by only one of the two nodes at any given time. The impact of a $log$-policy was investigated in this case. In the present paper, one considers an additional important feature,  with several communication channels which can be used at the same time provided that they do not interfere.  The network analyzed  has a star topology with $J{+}1$ nodes: there are $J$ nodes, numbered from $1$ to $J$, which can transmit at the same time (i.e., their local communication channels do not interact because of interferences, see Figure~\ref{figstar}), and a central node with index $0$, which is  interacting with  the communication channels of all the other nodes.

As a consequence, node~$0$ cannot transmit at the same time as any of the other nodes. Let $L_i$ be  the current number of pending messages at node $i$. In idle state node~$0$ tries to transmit  at  rate  $K\log(1{+}L_0)$, and the attempt is successful only if all the channels are free, i.e. if none of the nodes with index greater than or equal to $1$ are currently transmitting at that time. When no communication is active, node $0$ is therefore in competition with all the other nodes for transmission. Consequently, it succeeds at rate  $K \log(1{+}L_0)$ or  one of the other nodes starts transmitting at  rate $K(\log(1{+}L_1)+\log(1{+}L_2)+\cdots+\log(1{+}L_J))$.

This situation will  be represented as follows. Suppose the transmission times of requests at node $j$ are exponentially distributed with rate $\mu_j$ and the state of the network of $J{+}1$ queues is $L=(L_j,0\leq j\leq J)$. Then  in our model,  as in~\cite{RV},  any non-empty node with index greater than or equal to $1$ receives the instantaneous capacity $W(L)$ to transmit and node $0$ receives $1-W(L)$ (the total capacity of the channel is assumed to be $1$), where
\begin{equation}\label{WL}
\displaystyle W(L)\stackrel{\text{def.}}{=}\frac{\log(1{+}L_1)+\cdots+\log(1{+}L_J)}{\log(1{+}L_0)+\log(1{+}L_1)+\cdots+\log(1{+}L_J)}.
\end{equation}
In particular node $j$,  $1\leq j\leq J$,  (resp.  node $0$) completes a transmission at rate $\mu_j W(L)$ (resp. $\mu_0(1-W(L))$).  This model assumes in fact that the constant  of proportionality $K$ is sufficiently large so that the waiting times to try to access the channel are negligible.
\begin{figure}[ht] \label{figstar}
\scalebox{0.3}{\includegraphics{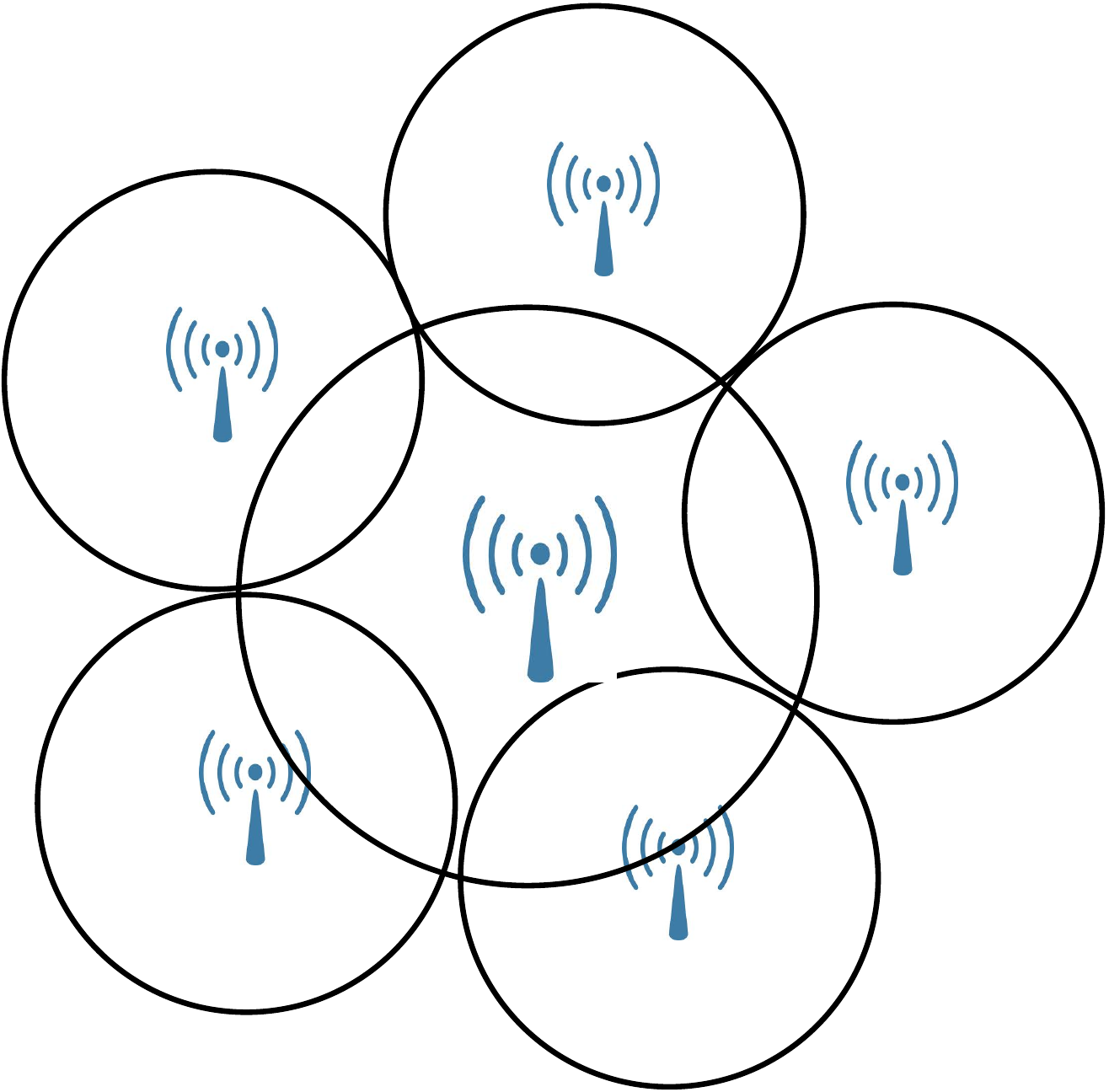}}
\caption{Star Network with $J{=}5$.}
\end{figure}

\subsection*{Assumptions and Notation}
 Requests arrive at node $0\leq j\leq J$ according to a Poisson process with rate $\lambda_j$ and their transmission times  are exponentially distributed with parameter $\mu_j$.  The quantity $\rho_j$ is the load of node $j$, $\rho_j{=}\lambda_j/\mu_j$.  Throughout the paper, without loss of generality one assumes that $\rho_1{<}\rho_2{<}\cdots{<}\rho_J$. That is, excluding node $0$, node $J$ is the most loaded.  One also defines
\begin{equation}\label{alpha}
\alpetu{j}{=}\frac{\rho_j}{1{-}\rho_j},\quad  1\leq j\leq J.
\end{equation}
For $t\geq 0$, $L_j(t)$ denotes the number of requests at node $j$ at time $t$. In what follows, the convergence of a sequence of processes on a time interval $I$  is that associated to the topology of uniform convergence on compact subsets of $I$.

\subsection*{Scaling analysis}
The purpose of this paper is to provide a fluid analysis of this network. This amounts to investigating the convergence properties of the following sequence of processes:
\[
\left( \frac{L_j(Nt)}{N}, 0\leq j\leq J\right),
\]
where $N$ is the norm of the initial state and tends to infinity.
It was shown in Section~7 of~\cite{RV} that such an analysis of the evolution of the state of the network with a fluid scaling  also leads to results on the asymptotic behaviour of the invariant distribution in a heavy traffic regime, and also on the transience properties of the overloaded network.

Among all  possible large initial states, one will consider the most interesting (i.e. difficult) case in which the central node, with index $0$, has $N$ requests and all the other nodes are initially empty:
\begin{equation}\label{initstate}
L_0(0)=N \text{ and } L_j(0)=0 \text{ for } 1\leq j\leq J.
\end{equation}
In the sequel, a superscript $N$ will be used to recall the dependency on $N$ and the process with initial condition~\eqref{initstate} will thus be denoted by
\[
L^N(t)=(L_0^N(t),L_1^N(t), \ldots, L_J^N(t)), \quad t\geq 0.
\]
The other cases for the initial state can be treated in a similar (sometimes easier) way. See the discussion at the end of Section~\ref{SecGen}.  The main problem is to describe how the numbers of requests at the initially empty nodes increase with time and the scaling parameter $N$.  Due to the  interacting communication channels, such an analysis is much more challenging than in our previous work.

To stress the differences with our previous analysis in~\cite{RV}, let us review the main results obtained on the time evolution of the network with two nodes, or $J=1$. In all that follows, one uses the notation $x{\wedge}y=\min \{x,y\}$ and $x^+{=} \max\{x,0\}$.

\subsection{Results for the network with two nodes}\label{ss: two nodes}
This is the case $J{=}1$ with only one communication channel. It was shown in~\cite{RV}  that  two other time scales have to be investigated to understand the convergence properties of the fluid scaling properly.
It turns out that the most interesting case  is when $\rho_1{<}1/2$, or equivalently when $\alpetu{1}$ defined by Equation~\eqref{alpha} satisfies $\alpetu{1}{<}1$.
\begin{enumerate}
\item \label{casa}  {{\em The time scale} ${t\to N^t}$ for $t<\alpetu{1}\wedge 1$.}\ \\
If the initial state is $(L_0^N(0),L_1^N(0))=(N,0)$,  the convergence in distribution
\begin{equation}\label{Nt2}
\lim_{N\to+\infty} \left(\frac{L_1^N(N^t)}{N^t},  0<t<\alpetu{1}\wedge 1\right)= \left(\lambda_1-\mu_1\frac{t}{t+1} ,  0<t<\alpetu{1}\wedge 1\right)
\end{equation}
holds and the first order of the  state  $L_0^N$ of node $0$ stays at $N$ on this time scale.
\medskip
\item {{\em The time scale} ${t\to N^{\alpetu{1}}(\log N)\,t}$.}\ \\
If $(L_0^N(0),L_1^N(0)) =(N,\lfloor N^{\alpetu{1}}\rfloor)$,  the convergence in distribution
\[
\lim_{N\to+\infty}\bigg(\frac{L_1^N(N^{\alpetu{1}}(\log N)\,t)-N^{\alpetu{1}}}{\sqrt{N^{{\alpetu{1}}}\log N}}\bigg)= (Z(t))
\]
holds, where $(Z(t))$ is an Ornstein-Uhlenbeck process.  On this time scale, $L_1^N$ stabilizes around the value $N^{\alpetu{1}}$ and the process $L_0^N$ still remains at $N$.
\medskip
\item {{\em The fluid time scale} ${t\to Nt}$.}\\
The relation
\[
\lim_{N\to+\infty}\bigg(\bigg(\frac{L_0^N(Nt)}{N},\frac{L_1^N(Nt)}{N^{\alpetu{1}}}\bigg),t>0\bigg)=\Big(\big(\gamma(t),\gamma(t)^{\alpetu{1}}\big),t>0\Big)
\]
holds for the convergence in distribution, with
\[
\gamma(t)=(1+(\lambda_0-\mu_0(1-\rho_1))t)^{+}.
\]
\end{enumerate}

\subsection{Evolution of the state of the network with a star topology}
The main results on the fluid behaviour are gathered in Theorem~\ref{th: fluid general2} of Section~\ref{SecGen}. They are summarized as follows.

\medskip
\noindent
    {\bf Convergence on the Fluid Time Scale.} Recall the quantities  $(\alpetu{j})$ defined by Relation~\eqref{alpha} and set
\[
\beta_j^*\stackrel{\text{\rm def.}}{=}\frac{\alpetu{j}}{J{-}j},\, 1{\leq} j{\leq} J, \text{ and }
\kappa\stackrel{\text{\rm def.}}{=}\sup\left\{j\geq 1: \frac{\alpetu{j}}{J{-}j{+}1}<1\right\},
  \]
with $\sup(\emptyset)=0$. The following convergences in distribution hold on a time interval $(0,t_0)$, where $t_0\in (0,\infty]$ depends on the parameters of the network.
  \begin{enumerate}
  \item If $\kappa=0$,
        \[
    \lim_{N\rightarrow \infty}\left(\frac{L_{0}^N(Nt)}{N}, \ldots,\frac{L_{J}^N(Nt)}{N}\right) = (\gamma_{0}(t),\gamma_{1}(t),\ldots,\gamma_J(t)).
    \]
  \item In the case $1\leq \kappa<J$,
    \begin{enumerate}
      \item If $\beta_\kappa^*<1$,
\begin{multline*}
\lim_{N\rightarrow \infty}\left(\frac{L_{0}^N(Nt)}{N}, \frac{L_1^N(Nt)}{(\log N)^3}, \ldots, \frac{L_\kappa^N(Nt)}{(\log N)^3}, \frac{L_{\kappa+1}^N(Nt)}{N}, \ldots,\frac{L_{J}^N(Nt)}{N}\right)
\\= (\gamma_0(t),0^{(\kappa)},\gamma_{\kappa+1}(t),\ldots,\gamma_J(t)).
\end{multline*}
\item If $\beta_\kappa^*>1$,
  \begin{multline*}
\lim_{N\rightarrow \infty}\left(\frac{L_{0}^N(Nt)}{N}, \frac{L_1^N(Nt)}{(\log N)^3}, \ldots, \frac{L_{\kappa-1}^N(Nt)}{(\log N)^3}, \frac{L_{\kappa}^N(Nt)}{N^{\alpha_\kappa^*-(J-\kappa)}}, \frac{L_{\kappa+1}^N(Nt)}{N}, \ldots,\frac{L_{J}^N(Nt)}{N}\right)
\\= \left(\gamma_0(t),0^{(\kappa-1)},\frac{1}{\gamma_{\kappa+1}(t)\gamma_{\kappa+2}(t)\cdots \gamma_J(t)},\gamma_{\kappa+1}(t),\ldots,\gamma_J(t)\right).
\end{multline*}
    \end{enumerate}
\item If $\kappa=J$,
  \[
\lim_{N\rightarrow \infty}\bigg(\frac{L_0^N(Nt)}{N}, \frac{L_1^N(Nt)}{(\log N)^3}, \ldots, \frac{L_{J-1}^N(Nt)}{(\log N)^3} , \frac{L_J^N(Nt)}{N^{\alpetu{J}}} \bigg) = \big(\gamma_0(t) ,0^{(J{-}1)}, \gamma_0(t)^{\alpetu{J}}\big).
\]
  \end{enumerate}
The functions $(\gamma_j(t))$ are deterministic, non-trivial, affine functions. They are defined in Theorem~\ref{th: fluid general2} in Section~\ref{SecGen}. The constant $t_0$ is the first instant on the fluid time scale when the central node empties, i.e. $\gamma_0(t_0){=}0$. Its expression is also given in the statement of the theorem.

The expression given by Relation~\eqref{WL} of the capacity $W(L^N(N\cdot))$ allocated to the nodes with positive indices and the above convergences in distribution show the following property. If $\kappa{>}0$, the states of the  nodes  whose  indices are  between $1$ and $\kappa{-}1$ do not have an impact in the quantity $W(L^N(N\cdot))$, and therefore on  the asymptotic behaviour of the other nodes.  The order of magnitude of the states of these nodes is negligible with respect to any power of $N$ in the fluid regime. As one will see, they behave locally like ergodic $M/M/1$ queues. Hence, on the fluid time scale only a subset of the nodes remain nonnegligible. For example, when $1{<}\kappa{<} J$ and $\beta^*_\kappa>1$, the state of the central node is of the order of $N$, the state of the node with index $\kappa$ is of the order of $N^{\alpetu{\kappa}-(J-\kappa)}$, and all the nodes whose indices lie between $\kappa{+}1$ and $J$ have a number of pending requests which is of the order of $N$.  The other cases exhibit similar behaviours.  Figure~\ref{figstar2} corresponds to the case $\kappa=J$ for $J=3$, but on the time scale $(N^t,t{\in}(0,1))$.

It is interesting to note that these results on the fluid time scale are essentially obtained via a precise analysis of the network on  the time scale $(N^t, t\in(0,1))$.  It should also be noted that  in our previous work~\cite{RV},  the asymptotic analysis of the  behaviour on this time scale, case~\ref{casa}) of Section~\ref{ss: two nodes}, was quite easy in fact. This is not at all the case for the network investigated in the paper.  Some quite technical work has to be done in order to obtain the desired convergence results for the evolution of the processes associated to  the exponents $\log(1{+}L_j^N(N^t))/\log N$,  $1\leq j\leq J$, on this time scale.   Once the asymptotic behaviour of the process $(L_j^N)$ on this time scale is derived, the behaviour of the full network on the fluid time scale $t\mapsto Nt$ can be obtained by using some of the results of~\cite{RV}.

\medskip
{\em Discontinuity on the fluid time scale.}  The convergence results for the fluid scaling are valid on an {\em open } interval $(0,t_0)$ excluding $0$, i.e. on time intervals of the form $[a N, b N]$ with $0{<}a{<}b$, hence ``after'' the time scale $(N^t, t\in(0,1))$. In fact the process exhibits  a kind of discontinuity at $0$ on the fluid time scale, for example in the above case~3) where $\kappa{=}J$. Indeed,   initially $L_J^N(0){=}0$ but $L_J^N(\eps N){\sim}  N^{\alpetu{J}}$ for $\eps{>}0$ arbitrarily close to $0$. In other words, the $J$th coordinate jumps to $N^\alpetu{J}$ at $t=0+$.  This phenomenon can be explained on the time scale  $(N^t, t\in(0,1))$, which is  one of the reasons why this time scale plays a major role in the analysis.

\medskip
{\em Time Varying Exponents in $N$ with Piecewise Affine Behaviours.} From the assumption~\eqref{initstate} on the initial state, only the state of node  $0$ is not zero, and is equal to $N$ initially.  Under appropriate conditions on the $\alpetu{j}$ defined by Equation~\eqref{alpha}, the remarkable feature of the evolution of this network is as follows: on the time scale $(N^t,t\in(0,1))$,  the state of a given node with index $1{\leq}j{\leq}J{-}1$ grows like a power of $N$ until an instant after which it starts {\em decreasing} and finally stabilizes in a finite neighborhood of $0$.  This is in fact  the most difficult technical point of the paper.  See Figure~\ref{figstar2}. Section~\ref{secsec} is essentially devoted to the proof of this result.

Let us describe the phenomenon more precisely.  Recall that the load $\rho_1$ of node~$1$ is such that $\rho_1<\rho_2<\cdots<\rho_J$. If $J\geq 2$ and $\alpetu{1}/J<1$, then
\[
\lim_{N\to+\infty} \left(\frac{L_1^N(N^t)}{N^t}, 0 < t < \frac{\alpetu{1}}{J}\right)= \left(\lambda_1{-}\mu_1\frac{J t}{J t+1}, 0 < t < \frac{\alpetu{1}}{J}\right),
\]
which is a more or less straightforward analogue of Relation~\eqref{Nt2}. More interesting and technically challenging is the behaviour of the process on the ``next'' time interval   $(N^{\alpetu{1}/J}, N^{\alpetu{1}/(J-1)})$ on the time scale $(N^t, t\in(0,1))$: If $\alpetu{1}/(J{-}1)<1$, the convergence in distribution
\begin{equation}\label{eq1}
\lim_{N\to+\infty} \left(\frac{L_1^N(N^t)}{N^{\alpetu{1}-(J-1)t}}, \frac{\alpetu{1}}{J} < t < \frac{\alpetu{1}}{J{-}1}\right){=}\left(\prod_{i=2}^J \frac{1}{\mu_i(\rho_i{-}\rho_1)}, \frac{\alpetu{1}}{J} < t < \frac{\alpetu{1}}{J{-}1}\right)
\end{equation}
holds on this time interval.
This is in fact an equivalent of the equilibrium exponent of the case $J{=}1$ investigated in~\cite{RV}, but it is now time-dependent and stabilizes at $0$ after some time.
Thus, at ``time'' $N^{{\alpetu{1}}/J}$, $L_1^N$ is of the order of $N^{\alpetu{1}/J}$ but just after the exponent in $N$ starts decreasing and is $0$ at ``time'' $N^{{\alpetu{1}}/{(J-1)}}$. Furthermore, at that time the process $L_1^N$ behaves like an ergodic $M/M/1$ queue  and therefore does not scale with any power of $N$ on the time scale $(N^t, t\in(0,1))$, while the states of the other nodes are still of the order of a power of $N$. Consequently, the component $\log(1{+}L_1)$ in the expression~\eqref{WL} of $W(L)$ can be discarded. In other words, after time $N^{{\alpetu{1}}/{(J-1)}}$, the system behaves like a network with node $1$ removed from the architecture. See Figure~\ref{figstar2} for a  representation of the evolution of the queues on the time scale $(N^t,t\in(0,1))$.

By induction, one then shows that  the states of the nodes with indices between $1$ and  $\kappa{-}1$  behave like $\kappa{-}1$ ergodic $M/M/1$ queues. If $\kappa=J$, nodes $0$ and $J$ are the only nodes with a nonnegligible number of requests (with respect to some power of $N$). The consequence is that, in this case, the study of the network is then reduced to the case of a network with two nodes, or $J{=}1$, which is precisely the configuration studied in our previous work~\cite{RV}. Hence, once the behaviour of the star network on the time scale $(N^t, t\in(0,1))$ is understood, its fluid analysis simply follows from the results in~\cite{RV}. In particular, one obtains the result conjectured in Wischik~\cite{Wischik} that under the condition
\[
\rho_0+\max(\rho_j, 1\leq j\leq J) =\rho_0+\rho_J<1,
\]
the Markov process $(L_j(t),0\leq j\leq J)$ is ergodic. It should be noted that Section~9 of~\cite{RV} gives a presentation without proofs of the slightly different (but easier) case of a network with $J$ nodes and the same communication channel, so that only one node can transmit at a time. The techniques which are developed in the present  paper can in fact  be used to establish these results.

\subsection*{Technical difficulties}
 An  important result is an invariance relation, Proposition~\ref{prop: averaging} of Section~\ref{secsec} in the case $J{=}2$ or Theorem~\ref{th: general} of Section~\ref{SecGen}  in the general case. It implies  that the sum of the exponents  $(\log(L_j(N^t))/\log N$, $j\geq 1)$ is constant on specific time intervals. The convergence~\eqref{eq1} is one of the main technical difficulties to establish this result. It turns out to be quite challenging  to show, in particular, that the exponent in $N$   of $(L_1(N^t))$ decreases after time $N^{\alpetu{1}/J}$.   The key technical tool to obtain this convergence is the construction of a new space-time harmonic function~\eqref{approx F} in Section~\ref{secsec} which is used to obtain $L_2$-estimates of the scaled processes $(L_j(N^t))$. When $J\geq 2$ is arbitrary, a family of such space-time harmonic functions is used, see Relation~\eqref{martingale function}.  In spirit, it is connected to some perturbation techniques although it does not seem to be directly related to this framework, see Kurtz~\cite{Kurtz} for example. The convergence~\eqref{eq1} is then  proved in Section~\ref{secsec} using stochastic calculus and several technical estimates  related to the behaviour of reflected random walks.

\setlength{\unitlength}{2144sp}%
\begin{figure}[ht]
\scalebox{0.4}{\includegraphics{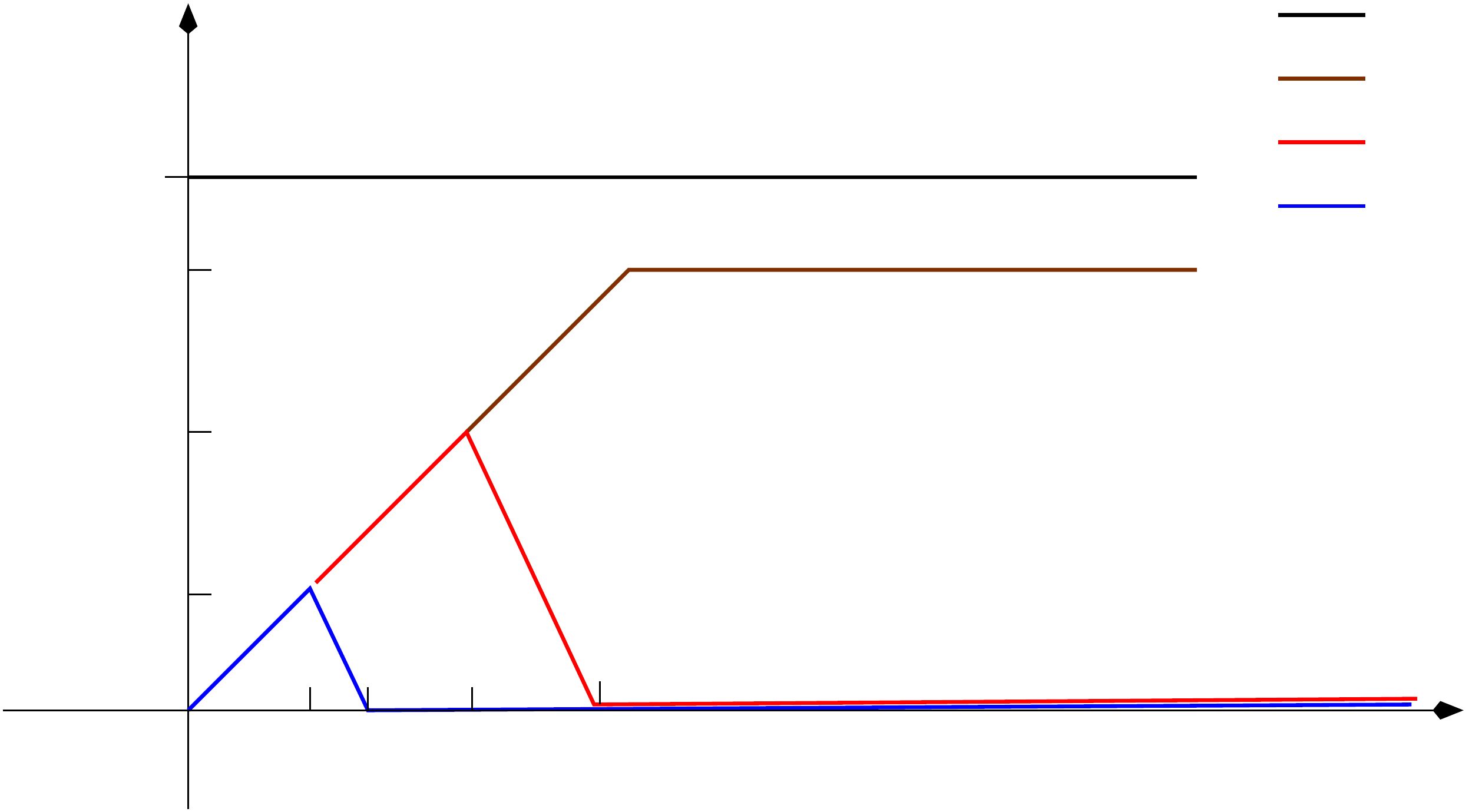}}
\put(-400,4800){$L_0^N$}
\put(-400,4400){$L_3^N$}
\put(-400,4000){$L_2^N$}
\put(-400,3600){$L_1^N$}
\put(-8200,3800){$1$}
\put(-8400,3200){$\alpetu{3}$}
\put(-8400,2200){$\frac{\alpetu{2}}{J{-}1}$}
\put(-8400,1400){$\frac{\alpetu{1}}{J}$}
\put(-7200,200){$\frac{\alpetu{1}}{J}$}
\put(-6800,200){$\frac{\alpetu{1}}{J{-}1}$}
\put(-6200,200){$\frac{\alpetu{2}}{J{-}1}$}
\put(-5400,200){$\frac{\alpetu{2}}{J{-}2}$}
\put(-4400,200){$1$}
\caption{Evolution of $\log(L_j^N(N^t))/\log N$, the exponent in $N$ of $L_j^N$  on the time scale $(N^t, t\in(0,1))$. Here $J{=}3$, $\rho_1{<}\rho_2{<}\rho_3$ and the initial state is $(N,0,0,0)$.}
\label{figstar2}
\end{figure}

\subsection*{Outline of the paper}
The model and some notation are introduced in Section~\ref{sec: model}, as well as a technical result,  Proposition~\ref{King}, which will be used repeatedly in the subsequent sections. The case  of the network with three nodes, or $J=2$, is studied in detail in Section~\ref{secsec}. It contains the main ingredients to extend the analysis to the general case $J\geq 3$ in Section~\ref{SecGen}.

\section{The Stochastic Model}\label{sec: model}
In this section, one introduces the main stochastic processes and some notation.
For $\xi\geq 0$, ${\cal N}_\xi$ (resp. ${\cal N}_\xi^2$) denotes a Poisson process with rate $\xi$ on $\R_+$ (resp. $\R_+^2$). For any $0 \leq a \leq b$, the quantity ${\cal N}_\xi([a,b])$ denotes the number of points of ${\cal N}_\xi$ in the interval $[a,b]$.  The Poisson processes on $\R_+^2$ are used as follows: If  $x>0$, then ${\cal N}_\xi^2([0,x]\times \cdot )$ is a Poisson process on $\R_+$ with rate $x\xi$. Throughout the paper, the  Poisson processes used are assumed to be independent. If $f$ is an $\R^d$-valued function on $\R_+$, $f(t{-})$ denotes the limit of $f$ to the left of $t>0$, provided that it exists.

The evolution of the Markov process $(L(s))=(L_j(s), 0\leq j\leq J)$ can be described as the solution to the following stochastic differential equation (SDE)
\begin{equation}\label{SDE1}
\begin{cases}
\diff L_0(s)&=\displaystyle {\cal N}_{\lambda_0}(\diff s) -\ind{L_0(s-)>0} {\cal N}_{\mu_0 }^2([W(L(s-)),1],\diff s),\\ \ \\
\diff L_j(s)&= \displaystyle{\cal N}_{\lambda_j}(\diff s) -\ind{L_j(s-)>0} {\cal N}_{\mu_j }^2([0,W(L(s-))), \diff s),\quad 1\leq j\leq J,
\end{cases}
\end{equation}
where $L(s)=(L_0(s),L_1(s),\ldots, L_J(s))$ and $W(\cdot)$ is defined by Equation~\eqref{WL}.

For $0\leq j\leq J$, one denotes the solution to the SDE~\eqref{SDE1} with initial state $(N,0,\ldots,0)$ by $(L_j^N(s))$, and $Y_j^N(s)$ describes the exponent in $N$ of $L_j^N(s)$:
\begin{equation}\label{def Y}
Y_j^N(s)=\frac{\log(1+L_j^N(s))}{\log N}.
\end{equation}
Recall that $\rho_j=\lambda_j/\mu_j$. Without loss of generality, one assumes that nodes $1, \ldots, J$ are ordered so that $\rho_1<\rho_2<\cdots<\rho_J$.

The following result is a simple consequence of a result of Kingman~\cite{Kingman} in the case of birth and death processes. It will be used repeatedly.
\begin{proposition}\label{King}
\begin{enumerate}
\item[]
\item If $(X(s))$ is a birth and death process on $\Z$ starting at $1$ with birth rate $\lambda$ and death rate $\mu>\lambda$, then for any integer $x\geq 0$,
\begin{equation}\label{KingEq}
\P\left(\sup_{s\geq 0} X(s)\geq x \right) \leq \left(\frac{\lambda}{\mu}\right)^x.
\end{equation}
\item  If $(X_+(s))$ denotes the process with the same transitions as $(X(s))$ but with a reflection at $0$, then for any $T>0$,
\begin{equation*}\label{KingEq2a}
\P\left(\sup_{0\leq s\leq T} X_+(s)\geq x \right) \leq(\lambda T+1) \left(\frac{\lambda}{\mu}\right)^x
\end{equation*}
and
\begin{equation*}\label{KingEq2b}
\E\left(\sup_{0\leq s\leq T}X_+(s)^2\right)\leq 2(\lambda T+1)\frac{\mu^2}{(\mu-\lambda)^2}.
\end{equation*}
\end{enumerate}
\end{proposition}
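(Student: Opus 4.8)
The plan is to establish part~(1) by Kingman's exponential-martingale argument and then to deduce part~(2) from it by decomposing the reflected trajectory into excursions away from the origin. For part~(1), I would first note that $g(k)=(\mu/\lambda)^k$ is harmonic for the generator $\mathcal{L}$ of $(X(s))$: since the birth rate is $\lambda$ and the death rate $\mu$ at every site,
\[
\mathcal{L}g(k)=\lambda\bigl(g(k{+}1)-g(k)\bigr)+\mu\bigl(g(k{-}1)-g(k)\bigr)=g(k)\Bigl(\lambda\bigl(\tfrac{\mu}{\lambda}-1\bigr)+\mu\bigl(\tfrac{\lambda}{\mu}-1\bigr)\Bigr)=0 .
\]
Hence $(g(X(s)))$ is a nonnegative local martingale, therefore a supermartingale, so that no integrability hypothesis is needed. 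Since the paths of $(X(s))$ are pure jump with unit increments, the event $\{\sup_{s\ge0}X(s)\ge x\}$ coincides with $\{\tau_x<\infty\}$ for $\tau_x=\inf\{s\ge0:X(s)=x\}$; applying the maximal inequality for the nonnegative supermartingale $(g(X(s)))$ — equivalently, optional stopping at $\tau_x$ together with Fatou's lemma — yields
\[
\P\bigl(\sup_{s\ge0}X(s)\ge x\bigr)=\P(\tau_x<\infty)\le\frac{g(X(0))}{g(x)}=\Bigl(\frac{\lambda}{\mu}\Bigr)^{x-X(0)},
\]
which gives~\eqref{KingEq}; this is exactly Kingman's martingale inequality specialised to a birth-and-death process.

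For part~(2), I would use that the reflected process $(X_+(s))$ splits into its successive excursions away from state~$0$. By the strong Markov property, the excursion opened by a $0\to1$ jump at a time $\sigma$ evolves like an independent copy of $(X(s))$ started at~$1$ and killed on its return to~$0$, so by part~(1) its maximum reaches level~$x$ with probability of the order of $(\lambda/\mu)^x$, and the same holds for the single excursion in progress at time~$0$ when $X_+(0)\ge1$. The key point is that every excursion is opened by a birth of the process, whence the number of excursions meeting $[0,T]$ is at most $1+\mathcal{N}_\lambda([0,T])$, of mean $1+\lambda T$. Writing $\{\sup_{0\le s\le T}X_+(s)\ge x\}$ as the event that some excursion meeting $[0,T]$ reaches level~$x$, I would bound its probability by the expected number of such excursions, made rigorous by conditioning on the (stopping) times of the $0\to1$ jumps and invoking the strong Markov property and part~(1) at each of them; this gives the first inequality of part~(2). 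For the $L^2$ estimate I would then use $\E[Z^2]=\sum_{k\ge1}(2k{-}1)\P(Z\ge k)$ with $Z=\sup_{0\le s\le T}X_+(s)$, substitute the tail bound $\P(Z\ge k)\le(\lambda T+1)(\lambda/\mu)^k$, and sum the resulting series via $\sum_{k\ge1}(2k{-}1)(\lambda/\mu)^k=\lambda(\mu+\lambda)/(\mu-\lambda)^2\le2\mu^2/(\mu-\lambda)^2$, the last inequality holding because $\lambda<\mu$; this produces the bound $2(\lambda T+1)\mu^2/(\mu-\lambda)^2$.

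The only delicate point is organisational: the union bound in part~(2) must be arranged with the strong Markov property, since the excursions are not independent of how many of them occur. The clean route is to sum the conditional probabilities $\P(\text{the }i\text{th excursion reaches }x\mid\mathcal{F}_{\sigma_i})$ over the successive $0\to1$ jump times $\sigma_i$, each of these being controlled by part~(1), and then to bound $\sum_i\P(\sigma_i\le T)$ by the mean number of births of the process in $[0,T]$, namely $\lambda T$. Apart from this bookkeeping, and the routine observation that a nonnegative local martingale is automatically a supermartingale so that the maximal inequality applies without integrability assumptions, every step is elementary.
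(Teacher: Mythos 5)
Your argument is substantially correct, and for part~(2) it follows the same route as the paper: decompose the reflected trajectory into excursions above the origin, bound each excursion's maximum by part~(1), and note that every excursion opens with a birth, so the number of excursions meeting $[0,T]$ is dominated by $1+\mathcal{N}_\lambda([0,T])$, of mean $1+\lambda T$ (the paper phrases the same observation as ``two excursions are separated by at least an $\mathrm{Exp}(\lambda)$ gap''). You additionally spell out the union-bound bookkeeping via the strong Markov property at the successive $0\to1$ jump times — a point the paper leaves implicit — and your tail-sum derivation of the $L^2$ bound via $\E[Z^2]=\sum_{k\ge1}(2k-1)\P(Z\ge k)$ is a clean way to get the stated constant. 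For part~(1) the paper gives no proof but cites Robert~\cite{Robert}, Theorem~3.5, Relation~(3.3); your exponential-martingale argument (harmonicity of $g(k)=(\mu/\lambda)^k$, optional stopping and the maximal inequality for the nonnegative supermartingale $g(X)$) is the standard proof of that cited inequality and makes the proposition self-contained.

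One numerical point you glide over: with $X(0)=1$ your supermartingale bound actually yields $\P\bigl(\sup_s X(s)\ge x\bigr)\le (\lambda/\mu)^{x-1}$, not $(\lambda/\mu)^x$, and this exponent is correct — indeed the inequality as stated in~\eqref{KingEq} fails at $x=1$ for a process started at $1$, since then $\P(\sup_s X(s)\ge1)=1>\lambda/\mu$. You write $(\lambda/\mu)^{x-X(0)}$ and then silently call it~\eqref{KingEq}. The extra factor $\mu/\lambda$ propagates to the first inequality of part~(2), but it is harmless in every application in the paper and the $L^2$ constant $2\mu^2/(\mu-\lambda)^2$ still absorbs it (since $\mu(\mu+\lambda)\le 2\mu^2$). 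So this is an imprecision in the proposition as stated rather than a flaw in your argument, but it would have been worth flagging rather than eliding.
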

\begin{proof}
Relation~\eqref{KingEq} is  Relation~(3.3) in Theorem 3.5 of Robert~\cite{Robert} for example. The second and third relations follow by remarking that the sample paths of $(X_+(s))$ can be obtained as a concatenation of excursions of $(X(s))$ above $0$. The estimate~\eqref{KingEq}  is in fact an upper bound on the probability that the supremum of an excursion is greater than $x$. Two excursions are separated by at least an exponential random variable with parameter $\lambda$, so that the total number of such excursions in the interval is stochastically bounded by 1 plus a Poisson random variable with parameter $\lambda T$. The proposition is proved.
\end{proof}
Let us begin with the case of a network with three nodes, or $J=2$. The general case is analysed in Section~\ref{SecGen}. As before, in all that follows, the convergence of processes is that associated to the topology of uniform convergence on compact subsets of the time interval of interest.

\section{Three node network}\label{secsec}
In this section one assumes that $J=2$, $\rho_1<\rho_2$. Recall that the initial state is given by  $L_0^N(0)=N$ and $L_1^N(0)=0=L_2^N(0)$.  The main results of this section can be summarized briefly as follows.
\begin{enumerate}
\item On the time interval $(0,(\alpetu{1}/2)\wedge 1)$, $L^N_1(N^t)$ and $L^N_2(N^t)$ grow like $C(t)N^t$, for some linear functions $C_1(t)$ and $C_2(t)$.
\item If $\alpetu{1}/2<1$, on the time  interval $(\alpetu{1}/2,\alpetu{1}\wedge 1)$,
  \begin{enumerate}
    \item $(L^N_1(N^t))$ decreases like $c_1N^{\alpetu{1}-t}$ for some constant $c_1>0$. If $\alpetu{1}<1$, then it reaches a neighbourhood of $0$ in which it remains for the rest of the evolution.
    \item  $(L^N_2(N^t))$ still grows like $c_2N^t$ for some constant $c_2$ such that  $c_1c_2=1$. If $\alpetu{2}<1$, then it remains in a close neighbourhood of $N^{\alpetu{2}}$ until $t=1$ when the fluid time scale ``begins''.
  \end{enumerate}
\end{enumerate}
First, the following proposition gives the behaviour of the network up to time $N^{\alpetu{1}/2}$. Its proof is identical to that of Proposition~2 of Robert and V\'eber~\cite{RV}, and is therefore omitted. 

\begin{proposition}\label{prop: phase 1}
The convergence of processes
\[
\lim_{N\rightarrow \infty} \bigg(\frac{L^N_1(N^t)}{N^t},\frac{L^N_2(N^t)}{N^t}\bigg)  = \bigg(\lambda_1 - \mu_1\frac{2t}{1+2t},\, \lambda_2 - \mu_2\frac{2t}{1+2t}\bigg)
\]
holds on the time interval  $(0,{\alpetu{1}}/{2}\wedge 1)$.
\end{proposition}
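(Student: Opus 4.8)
The plan is to show, for each $j\in\{1,2\}$ and each compact interval $[\delta,T]\subset(0,\alpetu{1}/2\wedge 1)$, that
\[
\sup_{\delta\leq t\leq T}\left|\frac{L_j^N(N^t)}{N^t}-C_j(t)\right|\xrightarrow[N\to\infty]{}0\quad\text{in probability},\qquad C_j(t):=\lambda_j-\mu_j\frac{2t}{1+2t};
\]
since the limits are deterministic, this yields the joint convergence of the pair and hence the convergence of processes on $(0,\alpetu{1}/2\wedge 1)$. Fix also $0<\delta'<\delta$. Three elementary facts are used: on this time scale node~$0$ does not move at first order; each queue is dominated by its arrival process and, once nonempty, has drift $\lambda_j-\mu_jW(L^N)$; and one has the Laplace-type asymptotic
\[
\log N\int_{\delta'}^{t}g(u)\,N^u\,\diff u=g(t)\,N^t+o(N^t),
\]
valid uniformly in $t\in[\delta,T]$ for every bounded Lipschitz $g$ (a single integration by parts), together with the trivial $\int_0^{N^{\delta'}}(\,\cdot\,)\diff s=O(N^{\delta'})=o(N^t)$. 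This is what converts the compensator of the departure term in~\eqref{SDE1} into $C_j(t)$.

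I would first check that node~$0$ stays at $N$: since $L_0^N(0)=N$ and arrivals at node~$0$ form a Poisson process of rate $\lambda_0$, one has $0\leq N-L_0^N(N^t)\leq{\cal N}_{\lambda_0}([0,N^t])$, which is $o(N)$ uniformly on $[0,T]$ because $T<1$; hence $L_0^N(N^t)=N(1+o(1))$ and, with $Y_j^N$ as in~\eqref{def Y}, $Y_0^N(N^t)\to 1$ uniformly on $[0,T]$ in probability. Next, $L_j^N(s)\leq{\cal N}_{\lambda_j}([0,s])$ for $j\in\{1,2\}$ (the queue length never exceeds the number of arrivals, as $L_j^N(0)=0$), so the Poisson law of large numbers gives $Y_j^N(N^t)\leq t+o(1)$ uniformly on $[\delta',T]$. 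Dividing numerator and denominator in~\eqref{WL} by $\log N$ gives the exact identity $W(L^N(s))=(Y_1^N(s)+Y_2^N(s))/(Y_0^N(s)+Y_1^N(s)+Y_2^N(s))$, and since $x\mapsto x/(c{+}x)$ is nondecreasing these bounds yield
\[
W(L^N(N^t))\leq\frac{2t}{1+2t}+o(1)\quad\text{uniformly on }[\delta',T].
\]
Feeding this upper bound into the departure term of the SDE~\eqref{SDE1} for $L_j^N$, whose compensator is bounded above by $\int_0^{N^t}\mu_jW(L^N(s))\,\diff s$, splitting that integral at $N^{\delta'}$, applying the Laplace asymptotic with $g(u)=2u/(1+2u)$, and controlling the compensated departure martingale by Doob's inequality (its predictable bracket is $O(N^t)$, so its contribution is $o(N^t)$ uniformly in $t$, via a geometric grid refining with $N$), one obtains
\[
L_j^N(N^t)\ \geq\ {\cal N}_{\lambda_j}([0,N^t])-\mu_j\frac{2t}{1+2t}N^t+o(N^t)\ =\ C_j(t)N^t+o(N^t)
\]
uniformly on $[\delta',T]$. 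A direct computation, using $\rho_1<\rho_2$, $T<\alpetu{1}/2\wedge1$ and~\eqref{alpha}, shows that $C_1$ and $C_2$ are bounded below by a positive constant on $[\delta',T]$; in particular $Y_j^N(N^t)\geq t-o(1)$ uniformly on $[\delta',T]$.

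Combining the two one-sided bounds, $Y_j^N(N^t)\to t$ uniformly on $[\delta',T]$, hence, by the identity above and $Y_0^N\to1$, $W(L^N(N^t))\to 2t/(1+2t)$ uniformly on $[\delta',T]$. Rerunning the previous estimate with this two-sided control of $W$ turns the inequality into the equality $L_j^N(N^t)=C_j(t)N^t+o(N^t)$: the lower bound just obtained, applied uniformly in the exponent, shows that with probability tending to one $L_j^N(s)\geq\frac12\inf_{[\delta',T]}C_j\cdot s>0$ for every $s\in[N^{\delta'},N^T]$, so the indicator $\ind{L_j^N(s-)>0}$ equals $1$ on that range and the compensator of the departure term on $[0,N^t]$ equals $\mu_j\,2t/(1+2t)\,N^t+o(N^t)$; with ${\cal N}_{\lambda_j}([0,N^t])=\lambda_jN^t+o(N^t)$, dividing by $N^t$ gives the claim on $[\delta,T]$.

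The step I expect to be the main obstacle is obtaining a \emph{sharp enough} a priori upper bound on $W(L^N(N^\cdot))$: the naive estimate that keeps only $\log(1+L_0^N)\sim\log N$ in the denominator of $W$ gives $W\leq 2t+o(1)$, which via the above yields positivity of $L_j^N(N^t)$ only for $t<\rho_j/2$ --- short of the endpoint $\alpetu{1}/2$. Writing $W$ in the form $z/(Y_0^N{+}z)$ with $z=Y_1^N+Y_2^N$ and exploiting the monotonicity of $x\mapsto x/(c{+}x)$ is what produces the correct bound $W\leq 2t/(1+2t)+o(1)$ valid on the whole interval, after which the lower bound on $L_j^N$ comes for free and the bootstrap to $Y_j^N\to t$ and then to the exact limit is short. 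The remaining ingredients --- uniformity of the Poisson law of large numbers on $[N^{\delta'},N^T]$, the Laplace asymptotic, the Doob estimate on the compensated departure processes via a geometric grid, and the control of $\ind{L_j^N>0}$ --- are routine. This is precisely why the argument coincides with the proof of Proposition~2 of~\cite{RV}, the only difference being that the numerator of $W$ in~\eqref{WL} now carries two logarithmic terms of the same order $N^t$ instead of one, which produces the factor $2$ in $2t/(1+2t)$ (and, in the general case of Section~\ref{SecGen}, the factor $J$).
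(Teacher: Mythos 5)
Your proposal is correct and, as you yourself note, follows the same argument as Proposition~2 of~\cite{RV}, to which the paper defers for this result (the paper omits the proof precisely because it is identical to that reference, the only change being the extra queue feeding the numerator of $W$, which yields the factor $2$). One small slip: the bound $0\leq N-L_0^N(N^t)\leq{\cal N}_{\lambda_0}([0,N^t])$ is wrong as written --- $L_0^N$ can exceed $N$, and the decrease of $L_0^N$ is controlled by the \emph{departure} process (rate at most $\mu_0$), not the arrival process; the correct statement is $|L_0^N(N^t)-N|\leq{\cal N}_{\lambda_0}([0,N^t])+\text{(departures on }[0,N^t])$, both of which are $O(N^T)=o(N)$, so the conclusion $Y_0^N(N^t)\to 1$ and the rest of the argument stand.
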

If $\alpetu{1}/2 > 1$, for a suitably small $\eps>0$ then one has that $L_i^N(\eps N)$ is of the order of $N$ for every $i\in \{0,1,2\}$, and the fluid analysis of this network is straightforward. See \ref{th2a}) of Theorem~\ref{th: fluid limit} below.

The first theorem describes the network on the time scale $t\mapsto N^t$ on the time interval $(\alpetu{1}/2,\alpetu{1}\wedge 1)$.
\begin{theorem}\label{th: averaging} Under the assumption that $\alpetu{1}/2<1$, the convergence
\[
\lim_{N\rightarrow \infty} \bigg(\frac{L_1^N(N^t)}{N^{\alpetu{1}-t}},\frac{L_2^N(N^t)}{N^t}\bigg) = \bigg(\frac{1}{\mu_2(\rho_2-\rho_1)}, \mu_2(\rho_2-\rho_1)\bigg)
\]
holds on the time interval $(\alpetu{1}/2,\alpetu{1}\wedge 1)$.
\end{theorem}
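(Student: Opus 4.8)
The plan is to work on the time scale $t\mapsto N^t$ and track the exponents $Y_j^N(s)$ defined by~\eqref{def Y}. By Proposition~\ref{prop: phase 1}, at the left endpoint $t=\alpetu{1}/2$ we have $L_1^N(N^{\alpetu{1}/2})\approx c\,N^{\alpetu{1}/2}$ and $L_2^N(N^{\alpetu{1}/2})\approx c'\,N^{\alpetu{1}/2}$, so both exponents equal $\alpetu{1}/2$; by that time $L_0^N$ is still $\sim N$, i.e. $Y_0^N\to 1$. On this interval $W(L^N)$ is governed by $\log(1{+}L_1^N)+\log(1{+}L_2^N)$ over $\log(1{+}L_0^N)$, so if $Y_1^N+Y_2^N\to \alpetu{1}$ on the whole interval one gets $W(L^N(N^t))\approx \alpetu{1}$, hence $1-W\approx 1-\alpetu{1}=1/(1+\alpetu{1})\cdot$(something); more precisely $W\to \rho_1$-ish so that the drift of $L_2^N$ is $\lambda_2-\mu_2 W\approx \lambda_2-\mu_2\rho_1=\mu_2(\rho_2-\rho_1)>0$ and the drift of $L_1^N$ is $\lambda_1-\mu_1 W\approx 0$. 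The first thing I would do is therefore establish the \emph{invariance relation} $Y_1^N(N^t)+Y_2^N(N^t)\to\alpetu{1}$, which is exactly the content advertised as Proposition~\ref{prop: averaging}: the sum of the exponents stays frozen at its entry value $\alpetu{1}$.

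**The core argument.**
Granting the invariance relation, node $2$ is, on the time scale $N^t$, essentially a birth--death process with birth rate $\lambda_2$ and death rate $\mu_2 W\approx\mu_2\rho_1<\mu_2$ (since $W\approx\rho_1<1$, as $\alpetu{1}<1\Rightarrow\rho_1<1/2$), hence it has positive net drift $\mu_2(\rho_2-\rho_1)$. A law-of-large-numbers argument for this near-linear queue — accelerating time by $N^t$ so that the increment over $[N^t,N^{t+dt}]$ is huge — gives $L_2^N(N^t)/N^t\to$ the drift $=\mu_2(\rho_2-\rho_1)$, which pins $Y_2^N(N^t)\to t$. Combined with the invariance relation this forces $Y_1^N(N^t)\to\alpetu{1}-t$, i.e. $L_1^N(N^t)=N^{\alpetu{1}-t+o(1)}$, giving the exponent of node $1$ for free. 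To upgrade this to the \emph{constant} prefactor $1/(\mu_2(\rho_2-\rho_1))$ claimed for $L_1^N(N^t)/N^{\alpetu{1}-t}$, I would use the balance/averaging principle: on its natural (fast) time scale $L_1^N$ equilibrates, and at local equilibrium of the birth--death dynamics with birth rate $\lambda_1=\mu_1\rho_1$ and death rate $\mu_1 W$, the ratio $\lambda_1/(\mu_1 W)$ controls the geometric tail; writing $W=W(L^N)$ in terms of the three exponents and using $L_1^N\approx N^{\alpetu{1}-t}$, $L_2^N\approx \mu_2(\rho_2-\rho_1)N^t$, $L_0^N\approx N$, one solves the self-consistent fixed-point equation for the prefactor $c_1$ of $L_1^N$ and finds $c_1 c_2=1$ with $c_2=\mu_2(\rho_2-\rho_1)$, hence $c_1=1/(\mu_2(\rho_2-\rho_1))$. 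Concretely this is done by an $L^2$/martingale computation: write the semimartingale decomposition of $L_1^N$ from~\eqref{SDE1}, and show the drift $\lambda_1-\mu_1 W(L^N)$ integrates to something forcing $L_1^N(N^t)$ to hover at the asserted level.

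**The main obstacle.**
The hard part is the invariance relation $Y_1^N+Y_2^N\to\alpetu{1}$, and within it the statement that the exponent of $L_1^N$ genuinely \emph{decreases} (rather than staying at $\alpetu{1}/2$ or doing something erratic) past time $N^{\alpetu{1}/2}$. The difficulty is that $L_1^N$ is, on the relevant time scale, a \emph{critically-drifted reflected random walk} (its drift $\lambda_1-\mu_1 W$ is close to $0$ because $W\approx\rho_1$), so naive LLN/ergodic-averaging arguments do not apply and fluctuations are of the same order as the mean. The paper's stated remedy is the construction of a space-time harmonic function~\eqref{approx F} for the coupled dynamics, which I would use to build a martingale out of a suitable functional of $(L_0^N,L_1^N,L_2^N)$; applying the optional stopping theorem and Doob's inequality to this martingale yields the needed $L^2$-control on $L_1^N(N^t)$, showing it cannot stay large, i.e. the exponent must come down along the line $\alpetu{1}-t$. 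The technical estimates on excursions of reflected random walks from Proposition~\ref{King} would be invoked to handle the reflection at $0$ and to bound the error terms coming from replacing $W(L^N)$ by its limiting value inside the drift. I would expect that, once the harmonic-function martingale is in place and the $L^2$-bound is extracted, the passage to the limit (tightness of the exponent processes, identification of the limit as the affine function $\alpetu{1}-t$, and transfer to the prefactor statement via the averaging principle) is comparatively routine; the crux is the choice of the harmonic function and the estimate it produces.
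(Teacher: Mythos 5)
Your high-level picture is right: on $(\alpetu{1}/2,\alpetu{1}\wedge 1)$ the sum $Y_1^N+Y_2^N$ freezes at $\alpetu{1}$, forcing $W\to\rho_1$, which gives positive drift $\mu_2(\rho_2-\rho_1)$ for node~$2$ and critical drift for node~$1$; and the self-consistency condition $c_1 c_2=1$ is indeed the heuristic that determines the prefactor. However, two load-bearing pieces of the paper's proof are misattributed or underestimated in your sketch.

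First, the space-time harmonic function~\eqref{approx F} is used to get $L^2$-control on $L_2^N$, not on $L_1^N$. The generator $G^N$ applied to $F(l_1,l_2,t)$ produces the dominant term $-(\log N)\big(l_2/N^t-\mu_2(\rho_2-\rho_1)\big)^2$ plus small errors in which $l_1/N^t$ appears; those errors are killed using the crude a priori bound $L_1^N\leq A_\gamma N^{\alpetu{1}/2}$ from Lemma~\ref{lem: first bounds} (itself a Kingman/Proposition~\ref{King} coupling argument, not a martingale one). A Gronwall argument then gives $\E\big((L_2^N(N^t)/N^t-\mu_2(\rho_2-\rho_1))^2\big)\lesssim N^{\alpetu{1}-2t}$ and Doob's inequality upgrades this to uniform convergence. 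So the martingale technology pins down $L_2^N$; $L_1^N$'s exponent then follows from the log-scale invariance of Lemma~\ref{lem: uniform estimate}. Your claim that the martingale ``yields the needed $L^2$-control on $L_1^N$'' reverses the roles.

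Second, and more importantly, you treat the prefactor statement for $L_1^N(N^t)/N^{\alpetu{1}-t}$ as a ``comparatively routine'' balance/averaging step once the exponent is known. It is not: going from $Y_1^N(N^t)\to\alpetu{1}-t$ to $L_1^NL_2^N(N^t)/N^{\alpetu{1}}\to 1$ is Proposition~\ref{prop: averaging}, arguably the hardest part of the section, and it is \emph{not} proved by a semimartingale decomposition of $L_1^N$ or a fixed-point computation. The drift $\lambda_1-\mu_1 W$ is only $O(1/\log N)$, with sign governed by $\log(c_1c_2)$, so the paper controls the product by covering $[N^{\alpetu{1}/2+\eta},N^{(\alpetu{1}\wedge 1)-\eta}]$ with about $N^{\alpetu{1}/2}$ windows of width $N^{\alpetu{1}/2}$ and, on each window, applying Kingman-type excursion bounds (Lemmas~\ref{lem: first excursion}--\ref{lem: initial condition}) to show the product cannot climb past $(1+2\eps)N^{\alpetu{1}}$ nor drop below $(1-2\eps)N^{\alpetu{1}}$, with a union bound over windows. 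You also conflate this Proposition with the weaker log-scale invariance of Lemma~\ref{lem: uniform estimate}; the two are at different scales and the gap between them is precisely where the excursion machinery lives. As written, your proposal would establish the $L_2^N$-part of the theorem and the exponent of $L_1^N$, but the constant $1/(\mu_2(\rho_2-\rho_1))$ for $L_1^N$ would remain unproved.
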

In particular, $(L_1^NL_2^N(N^t)/N^{\alpetu{1}})$ converges to the process constant equal to one on this time interval.

When $\alpetu{1}<1$ one shows that after ``time'' $N^{\alpetu{1}}$, $L_1^N$ remains of order $(\log N)^2$ at most, so that the system $(L_0^N,L_2^N)$ is indeed equivalent to the 2-queue system analyzed in \cite{RV}.
\begin{proposition}\label{prop: phase 3}
  Under the condition $\alpetu{1}<1$, if
  \[
  \theta_0^N=\inf\{t>0: L_1^N(t)=0\}
  \]
  then
\begin{enumerate}
\item for $\eps>0$, one has
\[
\lim_{N\rightarrow \infty} \P\big(\theta_0^N \leq N^{\alpetu{1}}\log N + N^\eps \log N\big)= 1.
\]
\item For  $\eps>0$ sufficiently small, one has
\[
\lim_{N\rightarrow \infty} \P\Bigg(\sup_{t\in [\theta_0^N,N^{(\alpetu{2}\wedge 1)-\eps}]} L_1^N(t)  \leq (\log N)^2\Bigg) = 1.
\]
\item The convergence of processes
\[
\lim_{N\rightarrow \infty}\bigg(\frac{L_2^N(N^t)}{N^t}\bigg) = \bigg(\lambda_2 - \mu_2\,\frac{t}{1+t}\bigg)
\]
holds on the time interval $(\alpetu{1},\alpetu{2}\wedge 1)$.
\end{enumerate}
\end{proposition}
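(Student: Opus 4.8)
The three assertions together pin down the crossover of node~$1$ from a size that is a power of $N$ to a size $N^{o(1)}$, after which the triple $(L_0^N,L_1^N,L_2^N)$ behaves, to leading order, like the two-node network of~\cite{RV}. The plan is: (i) use Theorem~\ref{th: averaging} to bring $L_1^N$ down to size $N^{o(1)}$; (ii) show that, once $L_1^N$ is small, the service rate $\mu_1 W(L^N)$ of node~$1$ exceeds $\lambda_1$ (up to a small correction), so that reflected-birth-and-death estimates in the spirit of Proposition~\ref{King} drive $L_1^N$ to $0$ quickly and keep it small afterwards; (iii) read off the evolution of $L_2^N$ from the two-node analysis of~\cite{RV}. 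These points are logically intertwined — the bound on $L_1^N$ rests on a lower bound for $L_2^N$, which in turn rests on $L_1^N$ being negligible — so I would run the argument as a bootstrap: control all three coordinates simultaneously up to a well-chosen stopping time, and then check that this time exceeds $N^{(\alpetu{2}\wedge 1)-\eps}$ with probability tending to one.

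\emph{Part 1.} Fix a small $\eta>0$. Theorem~\ref{th: averaging} gives, with probability tending to one, $L_1^N(N^{\alpetu{1}-\eta})=O(N^{\eta})$ and $L_2^N(N^{\alpetu{1}-\eta})\geq c\,N^{\alpetu{1}-\eta}$ for a constant $c>0$; crude Poisson bounds then give, on the whole interval $s\leq N^{\alpetu{1}}\log N+N^{\eps}\log N$, that $L_0^N(s)=N(1+o(1))$ — this is where the hypothesis $\alpetu{1}<1$ enters — and that $L_2^N$ does not drop much below its value at ``time'' $N^{\alpetu{1}-\eta}$. From~\eqref{WL}, $W(L^N(s))-\rho_1$ has the sign of $\log(1{+}L_1^N(s))+\log(1{+}L_2^N(s))-\alpetu{1}\log(1{+}L_0^N(s))$, and the $L^2$-estimates underlying Theorem~\ref{th: averaging} — the space-time harmonic functions~\eqref{approx F} of this section — pin $L_1^N(s)L_2^N(s)$ to order $N^{\alpetu{1}}$ up to ``time'' $N^{\alpetu{1}}$, so $L_1^N(s)$ tracks the decreasing quantity of order $N^{\alpetu{1}}/L_2^N(s)$; since $L_2^N$ has a drift bounded below by a positive constant in this regime, that quantity falls below $1$ by real time of order $N^{\alpetu{1}}$. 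Once $L_1^N(s)=O(1)$ and $L_2^N(s)\gtrsim N^{\alpetu{1}}$ one has $W(L^N(s))\geq\rho_1+\delta_N$ with $\delta_N$ of order $(\log\log N)/\log N$, so $L_1^N$ is dominated by a subcritical reflected birth-and-death process with birth rate $\lambda_1$ and death rate $\lambda_1+\mu_1\delta_N$; a hitting-time estimate for that process shows $L_1^N$ reaches $0$ within a further $\mathrm{polylog}(N)$ time, whence $\theta_0^N\leq N^{\alpetu{1}}\log N+N^{\eps}\log N$.

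\emph{Parts 2 and 3.} On $[\theta_0^N,N^{(\alpetu{2}\wedge 1)-\eps}]$ one keeps, with probability tending to one, $L_0^N\leq 2N$ (Poisson, using $\alpetu{2}\wedge1\leq1$), $L_2^N\leq N^{\alpetu{2}}$, and — the crucial point — $\log(1{+}L_2^N(s))\geq\alpetu{1}\log N-O(\log\log N)$, which holds because node~$1$ cannot have returned to $0$ before $L_2^N$ reached order $N^{\alpetu{1}}/\mathrm{polylog}(N)$ and $L_2^N$ only grows afterwards. The mechanism confining $L_1^N$ is the self-stabilising form of~\eqref{WL}: increasing $L_1^N$ increases $W(L^N)$, hence the service rate $\mu_1W(L^N)$, so that once $L_1^N(s)\geq(\log N)^{1+\delta}$ the drift of $L_1^N$ is at most $-\mu_1 c(\log\log N)/\log N$ for some $c>0$. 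I would make this quantitative by bounding, via a Proposition~\ref{King}(2)-type estimate, the probability of an up-crossing of $[(\log N)^2/2,(\log N)^2]$: the bound is of order $(\lambda_1 T{+}1)\exp(-c'(\log N)(\log\log N))$ with $T\leq N$, which tends to $0$ because $(\log N)(\log\log N)$ dominates $\log(\lambda_1 T{+}1)=\log N+O(1)$; this also shows that $(\log N)^2$ is essentially the smallest power of $\log N$ for which the claim can hold. Given Part~2, $\log(1{+}L_1^N(s))=o(\log N)$ on that interval, so node~$1$ contributes negligibly at scale $\log N$ to numerator and denominator of $W(L^N)$; hence, with $L_0^N(s)=N(1+o(1))$, the pair $(L_0^N,L_2^N)$ coincides, up to negligible perturbations, with the two-node system of~\cite{RV} started at $\theta_0^N$ from a state whose $L_2^N$-coordinate has exponent $\alpetu{1}+o(1)$. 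The argument of Section~7 of~\cite{RV}, the analogue for node~$2$ of~\eqref{Nt2}, then yields $L_2^N(N^t)/N^t\to\lambda_2-\mu_2 t/(1+t)$ on $(\alpetu{1},\alpetu{2}\wedge 1)$, consistently with Theorem~\ref{th: averaging} since as $t\downarrow\alpetu{1}$ the limit is $\lambda_2-\mu_2\alpetu{1}/(1{+}\alpetu{1})=\lambda_2-\mu_2\rho_1=\mu_2(\rho_2-\rho_1)$.

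\emph{Main obstacle.} Part~3 is essentially routine once node~$1$ is known to be negligible; the hard part is the joint control of $L_1^N$ and $L_2^N$ near the critical instant $N^{\alpetu{1}}$ in Parts~1--2. Crude Poisson estimates only give $W(L^N)=\rho_1+O(\mathrm{polylog}/\log N)$ with \emph{no} control of the sign of the correction, so one genuinely needs the fine $L^2$-control of $L_1^N L_2^N$ — the space-time harmonic function~\eqref{approx F} and the reflected-walk estimates of this section — both to force node~$1$ down to $0$ and to prevent it from re-entering the macroscopic regime; securing the lower bound on $\theta_0^N$, equivalently on $L_2^N(\theta_0^N)$, of the correct $\mathrm{polylog}(N)$ order is precisely what makes the exponent $(\log N)^2$ in Part~2 work.
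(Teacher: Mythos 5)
Your overall approach matches the paper's: reduce to the two-node case via a bound showing $L_1^N$ stays small, using $W(L^N)-\rho_1$ drift estimates and Proposition~\ref{King}, bootstrapping on a joint control of $L_1^N$ and $L_2^N$; Part~3 then follows from \cite{RV}. The one place where you overreach is the step ``so $L_1^N(s)$ tracks $N^{\alpetu{1}}/L_2^N(s)$\dots{} Once $L_1^N(s)=O(1)$''. The $L^2$-control of $L_1^NL_2^N$ (Proposition~\ref{prop: averaging}) is established only on $(\alpetu{1}/2+\eta,\alpetu{1}-\eta)$ in the exponential time scale, so at the end of that regime you know $L_1^N(N^{\alpetu{1}-\eta})=O(N^\eta)$, not $O(1)$, and extending the product estimate past $N^{\alpetu{1}-\eta}$ is precisely the non-trivial step you cannot take for granted. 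The paper patches this with a dedicated bootstrap lemma (Lemma~\ref{lem: L1 vanishes}) that only delivers the weaker but sufficient bound $L_1^N\leq N^{\eps}$ together with $L_2^N(s)\geq C_\eps\,s$ on $[N^{\alpetu{1}-\eps/2},N^{(\alpetu{2}\wedge 1)-2\eps}]$; the subcritical reflected walk then starts from level $N^{\eps}$ (not $O(1)$) with drift of order $\log\log N/\log N$, which is exactly why the extra $N^{\eps}\log N$ appears in Part~1 — whereas your argument would predict only a polylog correction. Your Part~2 also differs in detail (a single-phase up-crossing estimate with drift $\sim\log\log N/\log N$, versus the paper's two-phase argument that switches to a constant drift $-C\eps$ once $s\geq N^{\alpetu{1}+\eps}$); both work, and your single-phase variant is arguably cleaner, but in both cases the preliminary $N^\eps$ bound of Lemma~\ref{lem: L1 vanishes} is what guarantees the drift has the right sign from $\theta_0^N$ onward. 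So: same method, but you should make the uniform $N^\eps$ estimate an explicit lemma rather than inferring an $O(1)$ bound from a product estimate that does not reach $t=\alpetu{1}$.
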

Finally, the second theorem gives the fluid limit of the network with three nodes.
\begin{theorem}[Fluid Limits]\label{th: fluid limit} The following convergences of processes hold on the time  interval $(0,t_0)$. 
\begin{enumerate}
\item \label{th2a} If $\alpetu{1}/2>1$, then $t_0={3}/(\mu_0{-}3\lambda_0)^+$  and 
\begin{multline*}
\lim_{N\rightarrow \infty} \bigg(\frac{L_0^N(Nt)}{N}, \frac{L_1^N(Nt)}{N}, \frac{L_2^N(Nt)}{N}\bigg) \\
 = \bigg( 1{+}\mu_0\Big(\rho_0{-}\frac{1}{3}\Big)t, \mu_1\Big(\rho_1{-}\frac{2}{3}\Big)t, \mu_2\Big(\rho_2{-}\frac{2}{3}\Big)t \bigg).
\end{multline*}
\item  \label{th2b} If $\alpetu{1}/2<1<\alpetu{1}$, then $t_0 = 1/(1{-}\rho_0{-}\rho_1)^+ $ and 
\begin{multline*}
\lim_{N\rightarrow \infty} \bigg(\frac{L_0^N(Nt)}{N}, \frac{L_1^N(Nt)}{N^{\alpetu{1}-1}}, \frac{L_2^N(Nt)}{N} \bigg)\\
  = \bigg( 1 {+}\mu_0(\rho_0 {+}\rho_1{-}1)t, \frac{1}{\mu_2(\rho_2{-}\rho_1)t}, \mu_2(\rho_2 {-}\rho_1)t\bigg).
\end{multline*}
\item  \label{th2c} If $\alpetu{1}<1<\alpetu{2}$, then $t_0=1/(\mu_0(1/2-\rho_0))^{+}$ and 
\[
\lim_{N\rightarrow \infty}\bigg(\frac{L_0^N(Nt)}{N}, \frac{L_1^N(Nt)}{(\log N)^3}, \frac{L_2^N(Nt)}{N} \bigg) 
 = \bigg(1{+}\mu_0\Big(\rho_0{-}\frac{1}{2}\Big)t,0, \mu_2\Big(\rho_2{-}\frac{1}{2}\Big)t\bigg).
\]
\item  \label{th2d} If $\alpetu{2}<1$, then $t_0=+\infty$ and 
\begin{align*}
\lim_{N\rightarrow \infty}&\bigg(\frac{L_0^N(Nt)}{N}, \frac{L_1^N(Nt)}{(\log N)^3}, \frac{L_2^N(Nt)}{N^{\alpetu{2}}} \bigg) = \big((\gamma(t), 0, \gamma(t)^{\alpetu{2}})\big),
\end{align*}
with $\gamma(t)= (1{+}\mu_0(\rho_0 {+}\rho_2 {-} 1)t)^+$. 
\end{enumerate}
\end{theorem}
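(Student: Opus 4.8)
The plan is to treat the four regimes uniformly: use the results on the time scale $(N^t,\,t\in(0,1))$ to describe the network ``at the left end'' of the fluid time scale, and then establish a functional law of large numbers on a window $[\eta N,bN]$ with $0<\eta<b$. Fix $t\in(0,t_0)$ and $b>t$, and write $L^N(Nt)=L^N(\eta N)+\big(L^N(Nt)-L^N(\eta N)\big)$. The first term is controlled by Proposition~\ref{prop: phase 1}, Theorem~\ref{th: averaging} and Proposition~\ref{prop: phase 3} evaluated near the boundary $t=1$: in case~\ref{th2a}) ($\rho_1>2/3$, hence $\rho_2>\rho_1>2/3$) all three coordinates are of order $N$; in cases~\ref{th2c})--\ref{th2d}), Proposition~\ref{prop: phase 3} shows that $L_1^N$ has dropped to at most $(\log N)^2$ and stays there, $L_0^N$ is of order $N$, while $L_2^N$ is of order $N$ in case~\ref{th2c}) and of order $N^{\alpetu{2}}$ in case~\ref{th2d}); in case~\ref{th2b}), $L_0^N$ and $L_2^N$ are of order $N$, $L_1^N$ is of order $N^{\alpetu{1}-1}$ with $\alpetu{1}-1\in(0,1)$, and Theorem~\ref{th: averaging} gives $L_1^NL_2^N(N^t)/N^{\alpetu{1}}\to 1$ near $t=1$.

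Once these orders are known, the exponents $Y_j^N$ of~\eqref{def Y} converge to constants on the fluid window, and hence by~\eqref{WL} the capacity $W(L^N(N\cdot))$ converges to the constant $w^\star$ equal to $2/3$, $\rho_1$, $1/2$, $\rho_2$ in cases~\ref{th2a}), \ref{th2b}), \ref{th2c}), \ref{th2d}) respectively. With $W$ asymptotically constant, the SDE~\eqref{SDE1} decouples: $L_0^N$ behaves like an $M/M/1$ queue with arrival rate $\lambda_0$ and service rate $\mu_0(1-w^\star)$, and a node $j\geq 1$ of order $N$ like an $M/M/1$ queue with arrival rate $\lambda_j$ and service rate $\mu_j w^\star$. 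A Poisson/functional-LLN approximation of~\eqref{SDE1}, together with Proposition~\ref{King} to control the coordinates reflected at $0$, then yields the affine limits $\gamma_j$. Equivalently, in cases~\ref{th2c})--\ref{th2d}) the coordinate $L_1^N$ is negligible in the denominator of~\eqref{WL} (it contributes $o(\log N)$), so the network reduces to the two-node network of~\cite{RV} formed by node~$0$ and node~$2$, in the regime $\rho_2>1/2$ (case~\ref{th2c})) or $\rho_2<1/2$ (case~\ref{th2d})), and one invokes directly the fluid limit of~\cite{RV}; cases~\ref{th2a})--\ref{th2b}) follow from the same LLN with $w^\star=2/3$, resp.\ $w^\star=\rho_1$.

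It remains to treat the non-generic coordinates. In case~\ref{th2d}) node~$2$ is critically loaded ($\mu_2w^\star=\mu_2\rho_2=\lambda_2$): this is precisely the critical coordinate of the two-node system of~\cite{RV} (with exponent $\alpetu{2}$), and $L_2^N/N^{\alpetu{2}}\to\gamma_0^{\alpetu{2}}$, including the ``jump'' of $L_2^N$ to its equilibrium order $N^{\alpetu{2}}$, is exactly what the equilibrium-exponent analysis of~\cite{RV} provides; in cases~\ref{th2c})--\ref{th2d}) node~$1$ is strictly subcritical ($\mu_1w^\star>\lambda_1$) and stays negligible. In case~\ref{th2b}) node~$1$ is critically loaded ($\mu_1w^\star=\mu_1\rho_1=\lambda_1$) while node~$2$ is still of order $N$, so the system is genuinely three-dimensional and one needs the invariance relation $L_1^NL_2^N(Nt)/N^{\alpetu{1}}\to 1$ on the fluid scale; at the level of exponents $Y_1^N\to\alpetu{1}-1$ is forced because $Y_2^N\to1$, but the multiplicative constant is a second-order effect, obtained — as at the $N^t$ scale — via the martingale built from a space-time harmonic function and $L^2$-estimates on the scaled queues, started from the identity at $t=1$ furnished by Theorem~\ref{th: averaging}. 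Combined with $L_2^N(Nt)/N\to\mu_2(\rho_2-\rho_1)t$ this gives $L_1^N(Nt)/N^{\alpetu{1}-1}\to1/(\mu_2(\rho_2-\rho_1)t)$. Finally $t_0$ is the first zero of $\gamma_0$, namely $3/(\mu_0-3\lambda_0)^+$, $1/(1-\rho_0-\rho_1)^+$, $1/(\mu_0(1/2-\rho_0))^+$ and $+\infty$ in cases~\ref{th2a})--\ref{th2d}).

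The main obstacle is the passage between the two time scales, i.e.\ the behaviour of the process on the ``boundary layer'' $[N^{1-\delta},\eta N]$, equivalently near fluid time $0^+$. One must show that, uniformly as $\delta\to0$ after $N\to\infty$, the critically loaded node has reached its equilibrium order ($N^{\alpetu{2}}$ in case~\ref{th2d})) without overshooting — this needs an intermediate time scale analogous to the Ornstein--Uhlenbeck regime of the two-node network, together with the uniform-in-time bounds of Proposition~\ref{King} — and that once $L_1^N$ has fallen below $(\log N)^2$ on $[\theta_0^N,N^{1-\eps}]$ it stays $O((\log N)^3)$ throughout $[\eta N,bN]$, its local service rate $\mu_1w^\star$ being strictly larger than $\lambda_1$; this last point again follows from Proposition~\ref{King} since $(\lambda_1/(\mu_1w^\star))^{(\log N)^2}$ is smaller than any negative power of $N$. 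The remaining steps — the Poisson approximation of~\eqref{SDE1}, tightness, and the continuity properties of $W$ away from the configurations where one of the $\log(1+L_j)$ has smaller order — are routine.
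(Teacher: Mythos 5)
Your proposal follows essentially the same route as the paper: use Proposition~\ref{prop: phase 1}, Theorem~\ref{th: averaging} and Proposition~\ref{prop: phase 3} to pin down the orders of magnitude at time $\eta N$ (via the boundary-layer extension of the $(N^t)$-scale results to $t=1+(\log\eta)/\log N$), read off $w^\star\in\{2/3,\rho_1,1/2,\rho_2\}$ from~\eqref{WL}, and then couple the SDE~\eqref{SDE1} with $M/M/1$-type processes started from extremal configurations of order $N$, reducing cases~\ref{th2c}) and~\ref{th2d}) to the two-node fluid limits of~\cite{RV} and identifying $t_0$ as the first zero of $\gamma_0$. The only minor divergence is in case~\ref{th2b}), where you propose to propagate the invariance $L_1^NL_2^N/N^{\alpetu{1}}\to 1$ to the fluid scale via the space-time harmonic-function martingale, whereas the paper simply observes that the excursion-based argument of Proposition~\ref{prop: averaging} continues to apply as long as $L_2^N$ remains of order $N$; both routes are viable.
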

The rest of this section is devoted to the proofs of these results. Theorem~\ref{th: averaging} is proved in Section~\ref{ss:proof 1}, Proposition~\ref{prop: phase 3} in Section~\ref{ss:proof 2} and Theorem~\ref{th: fluid limit} in Section~\ref{ss:proof 3}.

\subsection{Proof of Theorem~\ref{th: averaging}}\label{ss:proof 1}
This is done in several steps. Since $L_0^N(0)=N$ and one is concerned with the time scale $(N^t,\, 0<t<1)$, the fluctuations of the process $(L_0^N(t))$  around $N$ are negligible for our purpose. It will be implicitly assumed that $L_0^N\equiv N$ on this time scale. To make this rigourous, one can proceed as in the proofs of related results in~\cite{RV}, see the proof of Proposition~1 in this reference for example,  and use a coupling of $L_0^N$ with its arrival process and with its departure process to establish that the results below hold in these worst-case scenarios.

First, from Proposition~\ref{prop: phase 1} one sees that at time $N^{\alpetu{1}/2}$, the drift term of node $1$ cancels while that of node $2$ is still positive. This suggests that, at least for a small amount of time after $N^{\alpetu{1}/2}$, $(L^N_2(t))$ keeps on increasing. The following lemma establishes a preliminary result in this direction.
\begin{lemma}\label{lem: first bounds}
For $\gamma\in \big({\alpetu{1}}/{2},\alpetu{2}/2\wedge 1\big)$,  the relation
\begin{equation*}\label{growth L2}
\lim_{N\rightarrow \infty}\P\left(\frac{1}{2}\left(\lambda_2{-}\mu_2\frac{2\gamma}{1{+}2\gamma}\right) <\hspace{-3mm} \inf_{s\in [N^{\alpetu{1}/2},N^\gamma]}\hspace{-3mm} \frac{L^N_2(s)}{s}\leq \hspace{-3mm} \sup_{s\in [N^{\alpetu{1}/2},N^\gamma]}\hspace{-3mm} \frac{L^N_2(s)}{s} {<} 2\lambda_2\right) {=} 1,
\end{equation*}
holds and there exists some  $A_\gamma>0$ such that
\begin{equation*}\label{growth L1}
\lim_{N\rightarrow \infty} \P\left(\sup_{s\in [N^{\alpetu{1}/2},N^\gamma]}L^N_1(s) {<} A_\gamma N^{\alpetu{1}/2}\right)= 1.
\end{equation*}
\end{lemma}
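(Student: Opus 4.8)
The strategy is to control the process $(L_2^N(s))$ and $(L_1^N(s))$ separately on the time window $[N^{\alpetu{1}/2},N^\gamma]$, starting from the known behaviour at the left endpoint provided by Proposition~\ref{prop: phase 1}. First I would fix a small $\eps>0$ and take a high-probability event on which, at time $s_0 := N^{\alpetu{1}/2}$, one has $L_1^N(s_0)=O(N^{\alpetu{1}/2})$ and $L_2^N(s_0)/s_0$ close to $\lambda_2 - \mu_2\,\frac{2s_0\text{-exponent}}{\ldots}$; more precisely, on this event both $Y_1^N(s_0)$ and $Y_2^N(s_0)$ are within $\eps$ of $\alpetu{1}/2$, so that in the capacity $W(L^N(s))$ the logarithms satisfy $\log(1+L_1^N(s)),\log(1+L_2^N(s)) = (\alpetu{1}/2 + o(1))\log N$ and $\log(1+L_0^N(s)) = (1+o(1))\log N$. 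The point of the upper bound on $L_2^N$ and the bound on $L_1^N$ is precisely to keep $W(L^N(s))$ pinned in a window around $\alpetu{1}/2 \cdot 2 / (1 + \alpetu{1}) = \rho_1$ — wait, more carefully: on the relevant window the exponents of $L_1^N$ and $L_2^N$ are comparable (both $\le \gamma < 1$ and $\ge$ something positive), so $W(L^N(s)) = \frac{Y_1^N + Y_2^N}{1 + Y_1^N + Y_2^N} + o(1)$, and the two summands being bounded away from $0$ and from $1$ forces $W$ to lie in a compact subinterval of $(0,1)$.

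Granting that $W(L^N(s))$ stays in such a compact subinterval $[\underline w, \overline w]\subset (0,1)$ for all $s$ in the window (which is a self-improving / bootstrap statement — see below), node $2$ is, from the SDE~\eqref{SDE1}, dominated above by an $M/M/1$-type input–output process with arrival rate $\lambda_2$ and service rate $\mu_2\underline w$, and dominated below by one with arrival rate $\lambda_2$ and service rate $\mu_2\overline w$. Coupling $L_2^N$ with its pure arrival process gives the upper bound $L_2^N(s) \le L_2^N(s_0) + \mathcal N_{\lambda_2}([s_0,s])$, whose value at $s\le N^\gamma$ is $(\lambda_2 + o(1))s$ by the functional law of large numbers for Poisson processes (after rescaling time by $s$); this yields the $\sup < 2\lambda_2$ part. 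For the lower bound, one uses $L_2^N(s) \ge L_2^N(s_0) + \mathcal N_{\lambda_2}([s_0,s]) - \mathcal N_{\mu_2\overline w}^{2}$-type departure count over the window (valid while $W\le\overline w$ and $L_2^N>0$, which one can ensure by the growth itself), again giving $L_2^N(s)/s \ge \lambda_2 - \mu_2\overline w + o(1)$, uniformly in $s\in[s_0,N^\gamma]$; choosing $\overline w$ slightly above $\tfrac{2\gamma}{1+2\gamma}$ — which is legitimate because the exponent $Y_2^N$ never exceeds $\gamma<\alpetu{2}/2\wedge 1$ on this window — makes this $\ge \tfrac12(\lambda_2 - \mu_2\tfrac{2\gamma}{1+2\gamma})$. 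The bound on $L_1^N$ comes from Proposition~\ref{King}(2): on the window the net drift of node $1$ is negative because its service rate $\mu_1 W(L^N(s)) \ge \mu_1\underline w$ can be arranged to exceed $\lambda_1$ (here one uses $\alpetu{1}/2 < \gamma$, i.e. $\rho_1/(1-\rho_1) < 2\gamma$, so $\rho_1 < \tfrac{2\gamma}{1+2\gamma}\le \underline w$ up to $o(1)$); hence $L_1^N$ restricted to the window is stochastically dominated by a reflected birth–death chain with $\lambda=\lambda_1$, $\mu = \mu_1\underline w > \lambda_1$ started from $O(N^{\alpetu{1}/2})$, and~\eqref{KingEq}/\eqref{KingEq2a} gives, for the time length $T = N^\gamma$ and threshold $x = A_\gamma N^{\alpetu{1}/2}$, a probability bounded by $(\lambda_1 N^\gamma + 1)(\lambda_1/(\mu_1\underline w))^{A_\gamma N^{\alpetu{1}/2}} + \P(L_1^N(s_0)\ge \tfrac12 A_\gamma N^{\alpetu{1}/2})$, which $\to 0$ for $A_\gamma$ large since $N^{\alpetu{1}/2}$ grows polynomially while the geometric factor is doubly-exponentially small.

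\textbf{Main obstacle.} The genuine difficulty is the circularity in the argument above: the bounds on $W(L^N(s))$ that drive the domination of $L_1^N$ and $L_2^N$ depend on $L_1^N,L_2^N$ being in exactly the ranges we are trying to prove. I would resolve this by a stopping-time / bootstrap argument: let $\tau_N$ be the first time in $[s_0, N^\gamma]$ at which one of the three desired inequalities (on $\inf L_2^N/s$, on $\sup L_2^N/s$, on $\sup L_1^N$) fails, with $\tau_N = N^\gamma$ if none fails; on $[s_0,\tau_N)$ all the domination estimates are valid, so they show that $L_1^N$ and $L_2^N$ stay strictly inside the prescribed ranges with probability $\to 1$, which forces $\P(\tau_N < N^\gamma)\to 0$. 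A secondary technical point, already flagged in the paper, is the replacement $L_0^N \equiv N$: one must check that coupling $L_0^N$ with its arrival and departure processes only perturbs $Y_0^N(s)$ by $o(1)$ on the whole time scale $(N^t, t<1)$ — this is exactly the worst-case-scenario argument invoked from~\cite{RV} at the start of Section~\ref{ss:proof 1}, and it plugs in here without change since it only affects $W$ through $\log(1+L_0^N(s)) = (1+o(1))\log N$.
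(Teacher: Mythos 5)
Your derivation of the negative drift of $L_1^N$ rests on the claim that one can pin $W(L^N(s))\ge \underline w$ uniformly on the window, with $\underline w \ge \tfrac{2\gamma}{1+2\gamma} > \rho_1$. This confuses the upper and lower bounds on $W$. The quantity $\tfrac{2\gamma}{1+2\gamma}$ arises as an \emph{upper} bound on $W$: using the crude domination $L^N_j(s)\le 2\lambda_j s$ one gets $Y_1^N+Y_2^N\le 2\gamma+o(1)$, hence $W\le \tfrac{2\gamma}{1+2\gamma}+o(1)$ (this is what the paper uses to get the lower bound on $L_2^N$). A uniform \emph{lower} bound on $W$ on the whole window must allow for $L_1^N$ being $O(1)$ — node $1$ can and does empty — in which case $Y_1^N\to 0$, and near the left endpoint $Y_2^N\approx\alpetu{1}/2$, so the best available bound is $W\ge \frac{\alpetu{1}/2}{1+\alpetu{1}/2}-o(1)$. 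That is \emph{strictly less than} $\rho_1=\frac{\alpetu{1}}{1+\alpetu{1}}$. So there is no $\underline w>\rho_1$ that works uniformly, and the stochastic domination of $L_1^N$ by a reflected walk with constant negative drift $\lambda_1-\mu_1\underline w$ does not hold.

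The correct mechanism — which the paper uses and is the real subtlety of this lemma — is that the drift of $L_1^N$ becomes (slightly) negative only \emph{when $L_1^N$ is itself large}. When $L^N_1(t_0)\geq N^{\alpetu{1}/2}$, the term $Y_1^N\approx\alpetu{1}/2$ contributes to the numerator of $W$; combined with $Y_2^N\geq \alpetu{1}/2 + \log(\eta A)/\log N$ (valid for $t_0\geq AN^{\alpetu{1}/2}$ after establishing the $L_2^N$ lower bound, choosing $A\eta>1$), this yields $W\geq \tfrac{\alpetu{1}}{1+\alpetu{1}}+\tfrac{C}{\log N}=\rho_1+\tfrac{C}{\log N}$. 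The drift margin is thus $O(1/\log N)$, not $O(1)$. Proposition~\ref{King} then gives an excursion-height bound of the form $(\lambda_1 N^\gamma+1)\bigl(\lambda_1/(\lambda_1+C/\log N)\bigr)^{N^{\alpetu{1}/2}}$, which vanishes because the geometric factor decays like $\exp(-cN^{\alpetu{1}/2}/\log N)$; the remaining sub-interval $[N^{\alpetu{1}/2},AN^{\alpetu{1}/2}]$ is handled by the crude $2\lambda_1 s$ bound. Your bootstrap/stopping-time scheme for handling the circularity is in the right spirit, and the rest of the $L_2^N$ argument is essentially the paper's; but the claimed uniform lower bound on $W$ would need to be replaced by this conditional, level-dependent bound for the proof to go through.
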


\begin{proof} The processes $(L^N_1(s))$ and $(L^N_2(s))$ are stochastically bounded from above by their arrival processes, which are Poisson processes with respective rates $\lambda_1$ and $\lambda_2$. Hence, the ergodic theorem for Poisson processes gives
\begin{equation}\label{crude estimate}
\lim_{N\rightarrow \infty}\P\left(\sup_{s\in [N^{\alpetu{1}/4}, N^\gamma]}\frac{L^N_2(s)}{s} {<} 2\lambda_2\right){=} \lim_{N\rightarrow \infty}\P\left(\sup_{s\in[N^{\alpetu{1}/4}, N^\gamma]}\frac{L^N_1(s)}{s} {<} 2\lambda_1\right){=}1.
\end{equation}
Recall the notation $Y_j^N$ introduced in (\ref{def Y}). Thus, for every $s\in [N^{\alpetu{1}/4}, N^{\gamma}]$ and some appropriate $C>0$, one has
\begin{multline*}
\frac{Y_1^N(s) + Y_2^N(s)}{1+ Y_1^N(s) + Y_2^N(s)}\leq \frac{\log (2\lambda_1 N^\gamma) + \log(2\lambda_2 N^\gamma)}{\log N + \log (2\lambda_1 N^\gamma) + \log(2\lambda_2 N^\gamma)} \leq  \frac{2\gamma}{1+2\gamma} + \frac{C}{\log N}
\end{multline*}
with probability tending to $1$. As a consequence, on the time interval $[N^{\alpetu{1}/4},N^\gamma]$ the process $(L^N_2(s))$ is stochastically bounded from below by the process
\[
\Big({\cal N}_{\lambda_2}[N^{\alpetu{1}/4},s] - \mathcal{N}_{ {2\gamma\mu_2}/{(1+2\gamma)} + {C\mu_2}/{\log N}}[N^{\alpetu{1}/4},s]\Big).
\]
Since $\gamma < {\rho_2}/{(2(1-\rho_2))}$ by assumption, one has $\lambda_2> 2\mu_2\gamma/{(1+2\gamma)}$, which enables one to conclude that
\begin{equation}\label{behaviour L2}
\lim_{N\rightarrow \infty}\P\bigg(\inf_{s\in [N^{\alpetu{1}/2},N^\gamma]}\frac{L^N_2(s)}{s} > \frac{1}{2}\, \bigg(\lambda_2 - \mu_2 \frac{2\gamma}{1+2\gamma}\bigg)\bigg) = 1.
\end{equation}
Together with (\ref{crude estimate}), this proves the first statement of the lemma. Furthermore, the event
\[
{\cal E}_N\stackrel{\text{def.}}{=}\bigg\{\sup_{s\in [N^{\alpetu{1}/4},N^\gamma]}\frac{L^N_1(s)}{s} {<} 2\lambda_1\bigg\} \bigcap \bigg\{\inf_{s\in [N^{\alpetu{1}/2},N^\gamma]}\frac{L^N_2(s)}{s} {>} \frac{1}{2}\, \bigg(\lambda_2 {-} \mu_2 \frac{2\gamma}{1{+}2\gamma}\bigg)\bigg\}
\]
has a probability arbitrarily close to $1$ for $N$ large enough.

Set $\eta =\big(\lambda_2 - 2{\gamma\mu_2}/{(1+2\gamma)}\big)/2$ and fix $A>1$  such that $A\eta>1$.  On the event ${\cal E_N}$, if at some instant $t_0\geq AN^{\alpetu{1}/2}$ one has $L^N_1(t_0)\geq N^{\alpetu{1}/2}$, then the total service rate of class $1$ jobs at that time is bounded from below by
\begin{multline*}
\mu_1 \, \frac{\log(N^{\alpetu{1}/2}) + \log(\eta A N^{\alpetu{1}/2})}{\log N + \log(N^{\alpetu{1}/2}) + \log(A\eta N^{\alpetu{1}/2})} \\
= \mu_1 \frac{\alpetu{1}}{1+\alpetu{1}} + \mu_1\, \frac{\log(\eta A)}{(\alpetu{1}+1)\log N}\frac{\log{N}}{\log(\eta A)+\log(N)}
\geq  \lambda_1 + \frac{C}{\log N},
\end{multline*}
for all $N\geq 2$ and some constant $C$. Since $A\eta>1$, one can choose $C>0$ here. Hence, on  the time interval $[A N^{\alpetu{1}/2}, N^\gamma]$, when  the process  $(L^N_1(s))$ is above the level $N^{\alpetu{1}/2}$, it is stochastically bounded (from above) by the process $N^{\alpetu{1}/2} + X_+(s)$, where $(X_+(s))$ is  a birth and death process reflected at $0$ with birth rate $\lambda_1$ and death rate $\beta_1{=}  \lambda_1 + {C}/{\log N}$. From Proposition~\ref{King}b) one obtains
\begin{equation}\label{excursion}
\P\left(\sup_{s\in[A N^{\alpetu{1}/2},N^\gamma]} X_+(s)\geq N^{\alpetu{1}/2}\right)\leq (\lambda_1 N^\gamma+1)\left( \frac{\lambda_1}{\lambda_1+C/\log N}\right)^{\mathclap{N^{\alpetu{1}/2}}}.
\end{equation}
In particular, on the event ${\cal E}_N$ none of the excursions of $(L^N_1(s))$ above $N^{\alpetu{1}/2}$ will exceed the value $2N^{\alpetu{1}/2}$ with a probability bounded by the quantity in the right hand side of (\ref{excursion}). This yields
\begin{multline*}
\limsup_{N\to+\infty}\P\left(\sup_{s\in [A N^{\alpetu{1}/2},N^\gamma]} L^N_1(s)\geq 2N^{\alpetu{1}/2} \right)\\
=\limsup_{N\to+\infty} \P\left(\left\{\sup_{s\in [A N^{\alpetu{1}/2},N^\gamma]}L^N_1(s)\geq 2N^{\alpetu{1}/2}\right\}\cap{\cal E}_N \right)\\
 \leq \limsup_{N\to+\infty} \lambda_1 N^\gamma\left( \frac{\lambda_1}{\lambda_1+C/\log N}\right)^{\mathclap{\qquad N^{\alpetu{1}/2}}}=0.
\end{multline*}
There remains to control $(L^N_1(s))$ on the time interval $[N^{\alpetu{1}/2},A N^{\alpetu{1}/2}]$. This is done with the help of Relation~\eqref{crude estimate}, which gives the identity
\[
\lim_{N\to+\infty} \P\left(\sup_{s\in [N^{\alpetu{1}/2},A N^{\alpetu{1}/2}]} L^N_1(s) < 2\lambda_1 A N^{\alpetu{1}/2} \right)=1.
\]
Taking $A_\gamma= 2+2\lambda_1 A$, the lemma is proved.
\end{proof}
Theorem~\ref{th: averaging} states in particular that the sequence of processes $(L^N_1(s)L^N_2(s)/N^{\alpetu{1}})$ is  converging to $(1)$ on the time interval $({\alpetu{1}}/{2},\alpetu{2}/2{\wedge}\alpetu{1}{\wedge}1)$.  The following result  gives a weaker version of that. It shows that it is true for the log scale, i.e. for $(Y_1^N(s))$ and $(Y_2^N(s))$, the exponents in $N$ of $(L^N_1(s))$ and $(L^N_2(s))$.
\begin{lemma}\label{lem: uniform estimate}
For every $\gamma\in \big({\alpetu{1}}/{2},\alpetu{2}/2 \wedge \alpetu{1}\wedge 1\big)$ and every $\eps>0$, one has
\[
\lim_{N\rightarrow \infty} \P\bigg(\sup_{t\in [\alpetu{1}/2,\gamma]}\big|Y_1^N(N^t)+Y_2^N(N^t)-\alpetu{1}\big| > \eps\bigg)= 0.
\]
\end{lemma}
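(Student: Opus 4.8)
The plan is to show that the sum $Y_1^N(N^t)+Y_2^N(N^t)$ is, with high probability, squeezed into a shrinking window around $\alpetu{1}$ uniformly on $[\alpetu{1}/2,\gamma]$. The upper bound is the easy half: by Lemma~\ref{lem: first bounds}, with probability tending to $1$ one has $L_1^N(s)\leq A_\gamma N^{\alpetu{1}/2}$ and $L_2^N(s)\leq 2\lambda_2 s\leq 2\lambda_2 N^\gamma$ on $[N^{\alpetu{1}/2},N^\gamma]$; taking logarithms and dividing by $\log N$ gives $Y_1^N(N^t)+Y_2^N(N^t)\leq \alpetu{1}/2+\gamma+o(1)$. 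But $\gamma<\alpetu{1}$, which is \emph{not} yet good enough — the crude bound only gives $\alpetu{1}/2+\gamma$, and we want $\alpetu{1}$. So the real content is a \textbf{self-improving (bootstrap) argument}: if on some subinterval we know $Y_1^N+Y_2^N\leq \alpetu{1}+\delta$, then $W(L^N)$ is bounded above by $(\alpetu{1}+\delta)/(\alpetu{1}+\delta+1)$ plus lower-order terms, which (when $\delta$ is small) makes the service rate of node~$1$ exceed $\lambda_1$, forcing $L_1^N$ downward and hence $Y_1^N$ to decrease; combined with the lower bound on $L_2^N$ this pins the sum near $\alpetu{1}$.

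The key steps I would carry out, in order: (1) Establish the upper bound $Y_1^N(N^t)+Y_2^N(N^t)\leq \alpetu{1}/2+\gamma+o(1)$ from Lemma~\ref{lem: first bounds}, valid on the whole interval with high probability; note this already gives a usable $a$ priori ceiling since $\alpetu{1}/2+\gamma<2\alpetu{1}\wedge(\alpetu{1}+1)$. (2) Establish the lower bound: on the event $\mathcal{E}_N$ of Lemma~\ref{lem: first bounds} one has $L_2^N(N^t)/N^t$ bounded below by a positive constant, so $Y_2^N(N^t)\geq t+o(1)\geq \alpetu{1}/2+o(1)$; hence $Y_1^N(N^t)+Y_2^N(N^t)\geq \alpetu{1}/2+o(1)$ trivially, and more importantly $Y_2^N$ alone contributes at least $t$. (3) The bootstrap for the upper bound near $\alpetu{1}$: working on a discretized grid of times $t_k=\alpetu{1}/2+k\eps'$ in $[\alpetu{1}/2,\gamma]$, show by induction on $k$ that $\sup_{t\in[t_{k-1},t_k]}(Y_1^N(N^t)+Y_2^N(N^t))\leq \alpetu{1}+\eta_k$ for a suitable decreasing sequence $\eta_k\to$ (something $\leq\eps$): given the bound at step $k-1$, the induced upper bound on $W(L^N)$ means node~$1$'s service rate exceeds $\lambda_1+c/\log N$ whenever $L_1^N$ is above a threshold of order $N^{(\alpetu{1}-Y_2^N)^+}$, so by the Kingman-type excursion estimate of Proposition~\ref{King}(b) (exactly as in the proof of Lemma~\ref{lem: first bounds}, around Relation~\eqref{excursion}) the process $L_1^N$ cannot exceed, say, twice that threshold on $[N^{t_{k-1}},N^{t_k}]$ with probability $\to 1$; this gives $Y_1^N+Y_2^N\leq \alpetu{1}+o(1)$ on that subinterval. (4) Combine with the lower bound and take $\eps'\to 0$ after $N\to\infty$; a finite union bound over the $O(1/\eps')$ grid cells and the $O(\log N)$-many excursion estimates (whose individual failure probabilities are $O(N^\gamma\exp(-cN^{\alpetu{1}/2}/\log N))$, summably negligible) completes the argument.

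The main obstacle is Step~(3): the feedback loop between $Y_1^N$ and $Y_2^N$ must be handled carefully, because the threshold above which $L_1^N$ is pushed down is itself $N^{\alpetu{1}-Y_2^N+o(1)}$, which depends on $Y_2^N$, which is only controlled from below (by $t$, via Lemma~\ref{lem: first bounds}) and from above (by the crude ceiling). One has to check that the birth–death comparison is valid on the relevant time window even though the ``threshold'' is moving — this is why the grid discretization is used, so that on each cell $[t_{k-1},t_k]$ the threshold $N^{\alpetu{1}-t_k}$ is a fixed number and the reflected-random-walk estimate \eqref{excursion} applies verbatim. A secondary point requiring care is that the excursion bound in Proposition~\ref{King}(b) needs the death rate to strictly exceed the birth rate, i.e. one needs the strict inequality $W(L^N)<\alpetu{1}/(\alpetu{1}+1)-$const coming from $Y_2^N\geq t>\alpetu{1}/2$ strictly on the interior of the interval; this is exactly where the open interval $(\alpetu{1}/2,\gamma)$ (rather than its closure) enters, and why the statement is about $\sup_{t\in[\alpetu{1}/2,\gamma]}$ with $\gamma<\alpetu{1}$ strictly — the endpoints are approached but the estimates degrade there, so one proves the bound on $[\alpetu{1}/2+\zeta,\gamma]$ first and then absorbs the small initial segment $[\alpetu{1}/2,\alpetu{1}/2+\zeta]$ using the crude bound \eqref{crude estimate} as at the end of the proof of Lemma~\ref{lem: first bounds}.
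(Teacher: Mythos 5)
Your plan only establishes the \emph{upper} bound $Y_1^N(N^t)+Y_2^N(N^t)\leq\alpetu{1}+\eps$; the lower bound $Y_1^N(N^t)+Y_2^N(N^t)\geq\alpetu{1}-\eps$, which is in fact the more delicate direction and the one the paper spends nearly all of its proof on, is missing. Step~(2) yields only the trivial bound $Y_1^N+Y_2^N\geq\alpetu{1}/2+o(1)$ (from $Y_2^N\geq t+o(1)$ and $t\geq\alpetu{1}/2$), and Step~(4) then treats this as ``the lower bound'' to combine with the bootstrap of Step~(3). But $\alpetu{1}/2\neq\alpetu{1}-\eps$: you must also show that $Y_1^N(N^t)$ cannot drop below $\alpetu{1}-t-\eps$, i.e.\ that $L_1^N$ cannot sink far below $N^{\alpetu{1}-t}$. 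That requires the \emph{opposite} drift argument: below that threshold, $W(L^N)$ falls strictly below $\rho_1$ by a margin of order $\eps/\log N$, the death rate of $L_1^N$ drops below $\lambda_1$, and the queue is pushed back up. The paper formalizes this with the stopping time $\nu^N=\inf\{t:Y_1^N(N^t)<\alpetu{1}-t-\eps/2\}$ and a comparison of $L_1^N$ from below with $\lfloor N^{\alpetu{1}-\nu^N-\eps/2}\rfloor-X_{2,+}$, where $X_{2,+}$ is a reflected birth--death walk with negative drift $-C\eps/\log N$. This in turn requires a preparatory estimate you omit entirely: Relation~\eqref{initial L1}, a lower bound $L_1^N(N^{\alpetu{1}/2})\geq C'N^{\alpetu{1}/2-\eps}$ obtained via Doob's inequality on a driftless random walk, which is what ensures $\nu^N>\alpetu{1}/2$ with high probability so that the comparison can be started.

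A secondary point: the drift mechanism you describe in the preamble and in Step~(3) is stated backwards. An \emph{upper} bound on $W(L^N)$ bounds the node-$1$ service rate from above, which cannot drive $L_1^N$ down. What is actually used for the upper bound is a \emph{lower} bound on $W(L^N)$: when $L_1^N$ exceeds the threshold $N^{\alpetu{1}-Y_2^N}$, the lower bound $Y_2^N(N^t)\geq t+O(1/\log N)$ from Lemma~\ref{lem: first bounds} (not an upper bound) forces $Y_1^N+Y_2^N>\alpetu{1}$, hence $W(L^N)>\rho_1+O(\eps/\log N)$, so the death rate strictly exceeds $\lambda_1$ and Proposition~\ref{King} applies. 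With that corrected, Steps~(1), (3), (4) do produce the upper bound (and no bootstrap is really needed here, since the threshold is already pinned by the a~priori estimate on $Y_2^N$); a symmetric argument as in the paper would yield the lower bound, but as written, the proposal does not establish the lemma.
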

\begin{proof}
Let us first consider the initial time $N^{\alpetu{1}/2}$. By Lemma~\ref{lem: first bounds}, there exist constants $C_1$, $C_2$ and $C_2'>0$ such that
\[
\lim_{N\rightarrow \infty} \P\big(L^N_1(N^{\alpetu{1}/2})\leq C_1 N^{\alpetu{1}/2} \hbox{ and }C_2' N^{\alpetu{1}/2} \leq L^N_2(N^{\alpetu{1}/2}) \leq C_2 N^{\alpetu{1}/2} \big) = 1.
\]
Let us first complete this result by showing that for every $0<\eps <\alpetu{1}/4$, one has
\begin{equation}\label{initial L1}
\lim_{N\rightarrow \infty} \P\big(L^N_1(N^{\alpetu{1}/2}) \geq C_1' N^{\alpetu{1}/2 -\eps}\big) = 1
\end{equation}
for some constant $C_1'=C_1'(\eps)>0$.
By Proposition~\ref{prop: phase 1}, there exists $C=C(\eps)>0$ such that
\[
\lim_{N\to+\infty}\P\big(L^N_1(N^{\alpetu{1}/2 - \eps})\geq C N^{\alpetu{1}/2 - \eps}\big)=1.
\]
Another use of Proposition~\ref{prop: phase 1} and Lemma~\ref{lem: first bounds} shows that the process  $L_1^N$ have  transitions such that
\[
\begin{cases}
x\rightarrow x+1\mbox{ at rate }\lambda_1\\
\displaystyle x \rightarrow x-1\mbox{ at a rate} \leq \mu_1 \frac{\alpetu{1}}{(1+\alpetu{1})} = \lambda_1
\end{cases}
\]
 on the time interval $[N^{\alpetu{1}/2-\eps},N^{\alpetu{1}/2}]$. 
Consequently,  the process $(L^N_1(s+N^{\alpetu{1}/2-\eps}))$ is stochastically bounded from below by $(C N^{\alpetu{1}/2 - \eps} + X(s))$  on the time interval $[0,N^{\alpetu{1}/2}{-} N^{\alpetu{1}/2 - \eps}]$, where $X$ denotes a symmetric random walk jumping up and down by $1$ at rate $\lambda_1$ in each direction. Writing $I_N=[0,N^{\alpetu{1}/2}-N^{\alpetu{1}/2 -\eps}]$, Doob's Inequality applied to the  martingale $(X(s))$ shows that for any $\kappa>0$,
\begin{multline*}
  \P\Big(\inf_{s\in I_N}L^N_1(s+N^{\alpetu{1}/2-\eps}) \leq  C N^{\alpetu{1}/2 - \eps} {-} N^{\kappa} \Big)\\ \leq \P\Big(\inf_{s\in I_N}X(s) \leq {-}N^{\kappa} \Big) 
\leq \frac{2\lambda_1}{N^{2\kappa}}N^{\alpetu{1}/2 },
\end{multline*}
by Proposition~\ref{King}.
Since $\eps<\alpetu{1}/4$, $\kappa$ can be chosen so that $\alpetu{1}/4<\kappa<\alpetu{1}/2- \eps$, and then
\[
\lim_{N\to+\infty} \P\Big(\inf_{s\in I_N}L^N_1(s) \leq  C N^{\alpetu{1}/2 - \eps} - N^{\kappa} \Big)=0.
\]
Relation~\eqref{initial L1} follows.

The next step is to show that
\[
\lim_{N\rightarrow \infty} \P\bigg(\inf_{s\in [N^{\alpetu{1}/2},N^{\gamma}]}\left(Y_1^N(s)+Y_2^N(s)\right)< \alpetu{1} -\eps\bigg) = 0.
\]
From Lemma~\ref{lem: first bounds}, one has that
\[
Y_2^N(N^t)= t +\mathcal{O}(1/\log N),\quad  \forall t\in \left[{\alpetu{1}/2},{\gamma}\right]
\]
holds  with a probability tending to $1$ as $N$ tends to infinity. Hence, all one has to prove is that
\begin{equation}\label{eqauxS}
\lim_{N\to+\infty}\P\left(\inf_{t\in[\alpetu{1}/2,\gamma]}\left(Y_1^N(N^t)- \alpetu{1} +\eps+ t\right)< 0\right)=0.
\end{equation}
Let
\[
\nu^N\stackrel{\text{def.}}{=} \inf\bigg\{t\geq \alpetu{1}/2:\, Y_1^N(N^t) < \alpetu{1} - t-{\eps}/{2}\bigg\}.
\]
By Relation~\eqref{initial L1}, necessarily $\nu^N{>}\alpetu{1}/2$ with probability tending to $1$ as $N$ becomes large.  On the event $\{\nu^N<\gamma\}$ and on the time interval  $[N^{\nu^N},N^{\nu^N+\eps/4}]$, the process $(L^N_1(s))$ is stochastically bounded from below by $(\lfloor N^{\alpetu{1}-\nu^N-\eps/2}\rfloor {-} X_{2,+}(s{-}\nu^N))$, where $(X_{2,+}(s))$ is a birth and death process starting at $0$ and reflected at $0$, for which the transition $x\mapsto x{-}1$ occurs at rate $\lambda_1$ and $x\mapsto x{+}1$ at rate
\[
\mu_1\, \frac{\alpetu{1}-\nu^N -\eps/2+\nu^N+\eps/4}{1 + \alpetu{1}-\eps/4} = \lambda_1 -C \eps
\]
for some constant $C>0$.  Consequently, setting $\gamma_0 =(\alpetu{1}/2+\eps/4)\wedge \gamma$ and using Proposition~\ref{King}, one obtains
\begin{align*}
\P\left(\rule{0mm}{5mm}\right. &\left. \inf_{\alpetu{1}/2\leq t\leq \gamma_0} Y_1^N(N^t)-\alpetu{1} + t+{\eps}\leq 0\right)\notag\\
 &\qquad  =\P\left(\inf_{\alpetu{1}/2\leq t\leq \gamma_0}\left(Y_1^N(N^t)-\alpetu{1} + t+{\eps}\right)\leq 0, {\nu^N}<(\alpetu{1}/2+\eps/4)\wedge \gamma\right)\notag \\
 & \qquad \leq \P\bigg(\sup_{\nu^N\leq t\leq  \gamma_0} \bigg(X_{2,+}(N^t{-}N^{\nu^N})-\lfloor N^{\alpetu{1}-\tau^N-\eps/2}\rfloor+\lceil N^{\alpetu{1}-t-\eps}\rceil\bigg) \geq  0\bigg)\notag \\
&\qquad \leq \P\bigg(\sup_{s\leq N^{\gamma_0}} X_{2,+}(s) {\geq}   N^{\alpetu{1}-\gamma-\eps/2}\left(1{-}N^{-\eps/2}\right) \bigg)\\
  &\qquad \leq \big(\lambda_1 N^{\gamma_0}+1\big)\left(\frac{\lambda_1{-}C\eps}{\lambda_1}\right)^{N^{\alpetu{1}-\gamma-\eps/2}/2}\hspace{-18mm}.
\end{align*}
The quantity in the right hand side of the last relation provides an upper bound on the probability that an excursion of $(Y_1(N^t))$ exceeds $\alpetu{1} - t-{\eps}$ for $t$ in the time  interval $(\alpetu{1}/2, \alpetu{1}/2+\eps/4)$. By repeating the procedure a finite number of times to cover the time interval $(N^{\alpetu{1}/2},N^{\gamma})$, one finally obtains Relation~\eqref{eqauxS}.

Similar arguments show that
\[
\lim_{N\rightarrow \infty} \P\bigg(\sup_{s\in [N^{\alpetu{1}/2},N^{\gamma}]} Y_1^N(s)+Y_2^N(s)> \alpetu{1} +\eps\bigg) = 0,
\]
and the lemma is proved.
\end{proof}
Lemmas~\ref{lem: first bounds} and ~\ref{lem: uniform estimate} show that,  for $t\in(\alpetu{1}/2, \alpetu{2}/2 \wedge\alpetu{1}\wedge 1)$, $L^N_2(N^t)$ is of the order of $N^t$ while $L^N_1(N^t) {\sim}N^{\alpetu{1}-t}$. The following proposition gives a  more precise result, on the (larger) interval $(\alpetu{1}/2,\alpetu{1}\wedge 1)$.
\begin{proposition}\label{prop: phase 2}
For the convergence of processes, the relation
\[
\lim_{N\rightarrow \infty}\bigg(Y_1^N(N^t),\, \frac{L^N_2(N^t)}{N^t}\bigg) = \big(\alpetu{1}-t,\, \mu_2(\rho_2-\rho_1)\big)
\]
holds on the time interval $(\alpetu{1}/2,\alpetu{1}\wedge 1)$.
\end{proposition}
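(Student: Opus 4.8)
\emph{Setup and reduction.} Write $c=\mu_2(\rho_2-\rho_1)>0$ for the target constant. Since $Y_1^N(N^t)=\bigl(Y_1^N(N^t)+Y_2^N(N^t)\bigr)-Y_2^N(N^t)$, and since $Y_2^N(N^t)=t+O(1/\log N)$ as soon as $L_2^N(N^t)/N^t$ converges to a positive constant, it is enough to establish, uniformly on every compact subinterval of $(\alpetu{1}/2,\alpetu{1}\wedge 1)$ and with probability tending to $1$: (i)~$Y_1^N(N^t)+Y_2^N(N^t)\to\alpetu{1}$, and (ii)~$L_2^N(N^t)/N^t\to c$. Indeed (i) and (ii) give $Y_1^N(N^t)\to\alpetu{1}-t$ at once.

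\emph{From (i) to (ii).} As in Section~\ref{ss:proof 1} one may take $L_0^N\equiv N$ on the time scale $(N^t,\,0<t<1)$, so that $W(L^N(s))=\bigl(Y_1^N(s)+Y_2^N(s)\bigr)/\bigl(1+Y_1^N(s)+Y_2^N(s)\bigr)+o(1)$. For a compact $[a,b]\subset(\alpetu{1}/2,\alpetu{1}\wedge 1)$ the real-time interval $[N^a,N^b]$ is the same as the set $\{N^t:t\in[a,b]\}$, so (i) says precisely that $W(L^N(s))\to\alpetu{1}/(1+\alpetu{1})=\rho_1$ uniformly on $[N^a,N^b]$. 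Thus, given $\eps>0$, on an event of probability tending to $1$ the departure rate $\mu_2 W(L^N(s))$ of node~$2$ lies in $[\mu_2(\rho_1-\eps),\mu_2(\rho_1+\eps)]$ throughout $[N^a,N^b]$. Since $\lambda_2=\mu_2\rho_2>\mu_2\rho_1$, node~$2$ is in a strictly positive drift regime; starting from $L_2^N(N^a)$, which is of order $N^a$ by Lemma~\ref{lem: first bounds} (or by the induction below), it stays positive over the window with high probability and is sandwiched between two random walks of birth rate $\lambda_2$ and death rates $\mu_2(\rho_1\pm\eps)$, each obeying a law of large numbers with slope $c\mp\mu_2\eps$. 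Dividing by $N^t$, using $N^a=o(N^t)$ for $t>a$, and letting $\eps\downarrow 0$ yields (ii).

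\emph{Proof of (i).} Lemma~\ref{lem: uniform estimate} gives (i) on $[\alpetu{1}/2,\gamma]$ for every $\gamma<\alpetu{2}/2\wedge\alpetu{1}\wedge 1$; if $\alpetu{2}/2\geq\alpetu{1}\wedge 1$ this already covers $(\alpetu{1}/2,\alpetu{1}\wedge 1)$. Otherwise one iterates along a grid $\alpetu{1}/2=s_0<s_1<\dots<s_m<\alpetu{1}\wedge 1$ of small mesh. The key observation is that the constraint $\gamma<\alpetu{2}/2$ enters the proof of Lemma~\ref{lem: uniform estimate} only through the a priori input, taken from Lemma~\ref{lem: first bounds}, that $L_2^N(N^t)/N^t$ stays bounded away from $0$ and $\infty$ (equivalently $Y_2^N(N^t)=t+O(1/\log N)$); granting this, the excursion arguments there — the stopping time $\nu^N$, the reflected birth-and-death comparisons via Proposition~\ref{King}, and the lower bound~\eqref{initial L1} keeping $L_1^N$ from collapsing too soon — run on any time interval. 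Now suppose (i), hence by the previous step also (ii), holds up to $s_k$; then $L_1^N(N^{s_k})$ is of order $N^{\alpetu{1}-s_k}$, a power of $N$ strictly below $N^{\alpetu{1}/2}$ since $s_k>\alpetu{1}/2$, and $L_2^N(N^{s_k})$ of order $N^{s_k}$. On $[N^{s_k},N^{s_{k+1}}]$ node~$2$ retains a strictly positive drift and $L_2^N$ stays of order at least $N^{s_k}$, so $Y_2^N(N^t)\geq s_k-o(1)$ there; consequently, whenever $L_1^N$ exceeds $N^{\alpetu{1}-s_k}$ one has $Y_1^N+Y_2^N\geq\alpetu{1}-o(1)$, a regime in which node~$1$ has departure rate $\geq\mu_1\alpetu{1}/(1+\alpetu{1})=\lambda_1$ and is thus subcritical, so by Proposition~\ref{King} such excursions are exponentially improbable and $L_1^N$ remains of order at most $N^{\alpetu{1}-s_k}$. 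With the crude bound $L_2^N(s)\leq 2\lambda_2 s$ this gives $Y_1^N(s)+Y_2^N(s)\leq\alpetu{1}-s_k+s_{k+1}+o(1)<\alpetu{2}$ for small enough mesh, which keeps node~$2$'s drift bounded below by a positive constant and $L_2^N$ growing like $N^t$ — this is exactly where the constraint $\gamma<\alpetu{2}/2$ of Lemma~\ref{lem: first bounds} disappears, it being only an artefact of the wasteful bound $Y_1^N(N^t)\leq t$, whereas here $Y_1^N(N^t)$ is of order $\alpetu{1}-t\leq\alpetu{1}/2$. The a priori input thus restored on $[s_k,s_{k+1}]$, the arguments of Lemma~\ref{lem: uniform estimate} pin $Y_1^N(N^t)+Y_2^N(N^t)$ to $\alpetu{1}$ there; after $m$ steps (i) holds on $(\alpetu{1}/2,\alpetu{1}\wedge 1)$.

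\emph{Main obstacle.} I expect the inductive step for (i) to be where the real work lies: keeping $Y_1^N(N^t)+Y_2^N(N^t)$ glued to $\alpetu{1}$ requires simultaneously that $L_1^N$ not grow — controlled above by the subcriticality of node~$1$ whenever $Y_1^N+Y_2^N\geq\alpetu{1}$, through Proposition~\ref{King} — and that $L_1^N$ not fall into a bounded neighbourhood of $0$ before the end of the interval, which would drop $Y_1^N+Y_2^N$ below $\alpetu{1}$; the latter needs a quantitative lower bound on $L_1^N$ of the same type as~\eqref{initial L1}, re-established at each $s_k$ from the now-controlled state. This is the same delicate balancing already present in Lemma~\ref{lem: uniform estimate}, now carried through finitely many grid steps; all the estimates involved are reflected-random-walk and excursion bounds resting on Proposition~\ref{King} and Doob's inequality, together with the a priori bounds of Lemma~\ref{lem: first bounds} re-derived at each $s_k$.
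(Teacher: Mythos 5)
Your proposal is correct in its essentials, but it takes a genuinely different route from the paper's for the second coordinate. The paper's own proof of Proposition~\ref{prop: phase 2} hinges on the space-time harmonic function $F$ of Relation~\eqref{approx F}: the compensated process $(M^N(t))$ of~\eqref{MartApp} is a martingale whose generator produces the dissipative term $-(\log N)\big(L_2^N(N^t)/N^t-\mu_2(\rho_2-\rho_1)\big)^2$; Gronwall's inequality then yields the quantitative $L^2$ bound~\eqref{gronwall 2}, and Doob's maximal inequality upgrades this to uniform convergence of $L_2^N(N^t)/N^t$ over compacts. The correction term $-(\mu_2/\mu_1)(l_1/N^t)(l_2/N^t-c)$ in $F$ is precisely what kills the otherwise uncontrolled coupling with $L_1^N$, and this construction is what the authors advertise in the introduction as the key technical tool; it also generalises directly to the family $F_j$ of~\eqref{martingale function} in the $J$-node case. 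What you do instead is to observe that Lemma~\ref{lem: uniform estimate} already pins $Y_1^N+Y_2^N$ to $\alpetu{1}$ via Proposition~\ref{King}-type excursion estimates alone, that this pins $W(L^N)$ to $\rho_1$, hence pins node~$2$'s departure rate to $\mu_2\rho_1$, and that a two-sided random-walk sandwich with the resulting drift $\mu_2(\rho_2-\rho_1)$ delivers $L_2^N(N^t)/N^t\to\mu_2(\rho_2-\rho_1)$ directly — no second-moment martingale required. This bypass is sound: since any compact in the interval satisfies $t>\alpetu{1}/2\geq (\alpetu{1}\wedge 1)/2\geq b/2$, the $O(\sqrt{N^b})$ fluctuations of the sandwiching walks vanish after dividing by $N^t$, and the circularity (node~$2$'s drift depends on $L_1^N$, node~$1$'s subcriticality depends on $L_2^N$) is the same one the paper resolves in its third step with the stopping time $\nu_0^N$; your grid-iteration performs the same resolution on each $[s_k,s_{k+1}]$ using the crude bound $Y_1^N+Y_2^N\leq \alpetu{1}-s_k+s_{k+1}+o(1)<\alpetu{2}$. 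What the paper's approach buys is an explicit $L^2$ rate $\asymp N^{-(2t-\alpetu{1})}$, which is reusable and robust; what yours buys is economy — you get the proposition from Proposition~\ref{King}, Lemma~\ref{lem: uniform estimate}, and the law of large numbers for Poisson processes, avoiding the generator computation and Gronwall altogether, at the cost of delivering only convergence in probability without a rate. One caution for a full write-up: in your inductive step, the sequential phrasing (``$L_2^N$ stays of order at least $N^{s_k}$, so $Y_2^N\geq s_k-o(1)$, consequently whenever $L_1^N$ exceeds~\ldots'') should be replaced by a genuine joint stopping-time argument exiting the good set $\{L_1^N< N^{\alpetu{1}-s_k+\eps}\}\cap\{L_2^N>N^{s_k-\eps}\}$, since each bound presupposes the other; your ``main obstacle'' paragraph shows you see this, but the sketch as written leaves it implicit.
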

\begin{proof}
One shows instead the equivalent statement
\[
\lim_{N\rightarrow \infty} \bigg(Y_1^N(N^t)+Y_2^N(N^t),\, \frac{L^N_2(N^t)}{N^t}\bigg)= \big(\alpetu{1},\, \mu_2(\rho_2-\rho_1)\big).
\]
Let us start by fixing a constant $\gamma$ such that
\[
\frac{\alpetu{1}}{2}<\gamma < \frac{\alpetu{2}}{2}\wedge \alpetu{1}\wedge 1,
\]
and show the desired convergence on the interval $(\alpetu{1}/2,\gamma]$. The convergence of the first coordinate is then a direct consequence of Lemma~\ref{lem: uniform estimate}.

As a first step, one shows that for any $t\in (\alpetu{1}/2,\gamma]$, the convergence
\begin{equation}
\lim_{N\to+\infty} \frac{L^N_2(N^t)}{N^t} =  \mu_2(\rho_2-\rho_1) \label{L2 conv Q2}
\end{equation}
holds for the $L_2$-norm.

Define the function $F$ by
\begin{equation}\label{approx F}
F(l_1,l_2,t) \stackrel{\text{def.}}{=} \frac{1}{2}\bigg(\frac{l_2}{N^t}-\mu_2(\rho_2-\rho_1)\bigg)^2 -\frac{\mu_2}{\mu_1}\frac{l_1}{N^t}\, \bigg(\frac{l_2}{N^t}-\mu_2(\rho_2-\rho_1)\bigg).
\end{equation}
By using the SDE's~\eqref{SDE1}, trite calculations give that the infinitesimal generator $G^N$ associated to the process  $(L^N_1(N^t),L^N_2(N^t),t)$ is given by
\begin{multline*}
G^NF(l_1,l_2,t) = -(\log N)\bigg(\frac{l_2}{N^t}-\mu_2(\rho_2-\rho_1)\bigg)^2  +C_1^N(l_1,l_2,t)\, \frac{\log N}{N^t} \\+ C_2^N(l_1,l_2,t)\frac{l_1}{N^t}\log N + C_3^N(l_1,l_2,t)\frac{l_1}{N^t}\frac{l_2}{N^t}\log N,
\end{multline*}
and there exists $K>0$ such that, for $i{\in}\{1,2,3\}$, the relation $|C_i^N(l_1,l_2,t)|\leq K$ holds for any $l_1$, $l_2\in \N$, $t\in (\alpetu{1}/2,\gamma]$ and $N\in \N$.  Define
\[
\psi^N(l_1,l_2,t) = C_1^N(l_1,l_2,t) \frac{\log N}{N^t} + C_2^N(l_1,l_2,t)\frac{l_1}{N^t}\log N + C_3^N(l_1,l_2,t)\frac{l_1}{N^t}\frac{l_2}{N^t}\log N.
\]
Then
\begin{multline}\label{MartApp}
  (M^N(t))\stackrel{\text{def.}}{=}\left(\rule{0mm}{6mm}F\left(L^N_1(N^t),L^N_2(N^t),t\right)  - F\left(L^N_1(N^{\alpetu{1}/2}),L^N_2(N^{\alpetu{1}/2}),\alpetu{1}/2\right)\right. \\
\left. {+}\int_{\alpetu{1}/2}^t\left[\log N\bigg(\frac{L^N_2(N^u)}{N^u}{-}\mu_2(\rho_2{-}\rho_1)\bigg)^2{-}\psi^N\left(L^N_1(N^u),L^N_2(N^u),u\right)\right]\diff u\right)
\end{multline}
is a zero-mean martingale on the time interval $(\alpetu{1}/2,\gamma]$. Taking the expectation and reordering the terms conveniently, one obtains that
\begin{multline*}
\!\E\bigg(\bigg(\frac{L^N_2(N^t)}{N^t}{-}\mu_2(\rho_2{-}\rho_1)\bigg)^2\bigg){=}{-} 2(\log N) \int_{\alpetu{1}/2}^t \!\!\E\bigg(\bigg(\frac{L^N_2(N^u)}{N^u}{-}\mu_2(\rho_2{-}\rho_1)\bigg)^2\bigg)\diff u \\
{+} 2\E\Big(\!F\big(L^N_1(N^{\alpetu{1}/2}){,}L^N_2(N^{\alpetu{1}/2}),\alpetu{1}/2\big)\Big) {+} \frac{2\mu_2}{\mu_1}\E\bigg(\!\frac{L^N_1(N^t)}{N^t} \bigg(\!\frac{L^N_2(N^t)}{N^t}{-} \mu_2(\rho_2{-}\rho_1)\bigg)\bigg) \\
+ 2\int_{\alpetu{1}/2}^t \E\big(\psi^N(L^N_1(N^u),L^N_2(N^u),u)\big)\diff u.
\end{multline*}
Now, recall the process $(X_+(s))$ introduced in the proof of Lemma~\ref{lem: first bounds}. A slight adaptation of the arguments given there shows that one can couple $(L^N_1(s))$ and $(2A\lambda_1N^{\alpetu{1}/2} + X_+(s))$ in such a way that $L_1^N(s)\leq 2A\lambda_1N^{\alpetu{1}/2} + X_+(s-N^{\alpetu{1}/2})$ for every $s\in [N^{\alpetu{1}/2},N^\gamma]$. Using this fact together with the Cauchy-Schwartz inequality, one can write
\begin{align*}
 & \left|\E\bigg(\frac{L^N_1(N^t)}{N^t} \bigg(\frac{L^N_2(N^t)}{N^t}- \mu_2(\rho_2-\rho_1)\bigg)\bigg)\right|\\
&  \leq \left[\E\bigg(\bigg(\frac{L^N_1(N^t)}{N^t}\bigg)^2\bigg)\E\bigg(\bigg(\frac{L^N_2(N^t)}{N^t}- \mu_2(\rho_2-\rho_1) \bigg)^2\bigg)\right]^{1/2}\\
&       {\leq} \left[\E\bigg(\bigg(\frac{2A\lambda_1N^{\alpetu{1}/2}{+}X_+(N^t{-}N^{\alpetu{1}/2})}{N^t}\bigg)^2\bigg)\E\bigg(\bigg(\frac{\mathcal{N}_{\lambda_2}[0,N^t]}{N^t}\bigg)^2\!\!{+}\mu_2^2(\rho_2{-}\rho_1)^2\bigg)\right]^{1/2}\\
&       \leq C_4 \left[\E\left(\left[\frac{1}{N^t}\left(2A\lambda_1N^{\alpetu{1}/2}{+}\sup_{\alpetu{1}/2\leq u\leq \gamma}X_+(N^u{-}N^{\alpetu{1}/2})\right)\right]^2\right)\right]^{\mathclap{\quad 1/2}} = C_5\, N^{{\alpetu{1}}/{2}-t},
\end{align*}
by Proposition~\ref{King}, where the constant $C_5$ is independent of $t$ and $N$. Likewise, there exist some constants $C_6$ and $C_7$ such that for any $u\in [\alpetu{1}/2,\gamma)$,
\[
\E\big(|\psi^N(L^N_1(N^u),L^N_2(N^u),u)|\big)\leq C_6\frac{\log N}{N^{\alpetu{1}/2}} +C_7\frac{\log N}{N^{u-{\alpetu{1}}/{2}}}.
\]
Since
\[
(\log N) \int_{\alpetu{1}/2}^t \frac{\diff u}{N^{u-{\alpetu{1}}/{2}}} = 1- \frac{1}{N^{t-{\alpetu{1}}/{2}}},
\]
the process
\[
\left(2\int_{\alpetu{1}/2}^t \E\left(|\psi^N\left(L^N_1(N^u),L^N_2(N^u),u\right)|\right)\diff u\right)
\]
is bounded by a constant uniformly in $t\in [\alpetu{1}/2,\gamma]$ and $N$. Finally, similar arguments give the existence of a constant $C_8$ such  that
\[
\E\Big(|F(L^N_1(N^{\alpetu{1}/2}),L^N_2(N^{\alpetu{1}/2}),{\alpetu{1}/2})|\Big) \leq C_8.
\]
One can now use Gronwall's Lemma to conclude that there exists $C_9>0$ independent of $N$ such that for every $t\in [\alpetu{1}/2,\gamma]$,
\begin{equation}\label{gronwall 2}
\E\bigg(\bigg(\frac{L^N_2(N^t)}{N^t}- \mu_2(\rho_2 - \rho_1)\bigg)^2\bigg) \leq C_9\, e^{-2(\log N) (t-\alpetu{1}/2)} = \frac{C_9}{N^{2t-\alpetu{1}}},
\end{equation}
which proves (\ref{L2 conv Q2}).

As a second step, one now shows the uniform convergence of $(L^N_2(N^t)/N^t)$ towards the constant process $\mu_2(\rho_2-\rho_1)$, over any time interval of the form $[{\alpetu{1}}/{2}+\eps,\gamma]$.
By Doob's maximal inequality applied to the martingale $(M^N(t))$ defined by Relation~\eqref{MartApp}, one has for every $\eta>0$
\begin{align*}
&\P\bigg(\sup_{\alpetu{1}/2+\eps\leq t\leq \gamma}|M^N(t)|>\eta\bigg) \leq \frac{1}{\eta}\, \E\left(|M^N(\gamma)|\right) \\
& \leq \frac{1}{\eta}\, \E\left(\left(\frac{L^N_2(N^\gamma)}{N^\gamma}-\mu_2(\rho_2-\rho_1)\right)^2\right) + \frac{1}{\eta}\, \E\left(\left(\frac{L^N_2(N^{\alpetu{1}/2+\eps})}{N^{\alpetu{1}/2+\eps}}-\mu_2(\rho_2-\rho_1)\right)^2\right) \\
&\hspace{4cm} + \frac{\log N}{\eta}\int_{\alpetu{1}/2+\eps}^\gamma \E\bigg(\bigg(\frac{L^N_2(N^u)}{N^u}-\mu_2(\rho_2-\rho_1)\bigg)^2\bigg)\, \diff u \\
&\hspace{5cm} + \frac{1}{\eta}\int_{\alpetu{1}/2+\eps}^\gamma \E\big(|\psi^N(L^N_1(N^u),L^N_2(N^u),u)|\big)\, \diff u.
\end{align*}
The quantity in the right hand side above converges to $0$ as $N$ tends to infinity, by all the estimates obtained so far and by using Lebesgue's  convergence theorem. Consequently, using Relations~\eqref{approx F} and~\eqref{MartApp}, one obtains that
\begin{align*}
\P\bigg(&\sup_{\alpetu{1}/2+\eps\leq t\leq \gamma}\bigg(\frac{L^N_2(N^t)}{N^t}-\mu_2(\rho_2-\rho_1)\bigg)^2>6\eta\bigg)\\
& \leq \P\bigg(\sup_{\alpetu{1}/2+\eps\leq t\leq \gamma}|M^N(t)|>\eta\bigg)
 + \P\bigg(\bigg(\frac{L^N_2(N^{\alpetu{1}/2+\eps})}{N^{\alpetu{1}/2+\eps}}-\mu_2(\rho_2-\rho_1)\bigg)^2>\eta\bigg) \\
 & \qquad \qquad +\P\bigg(\sup_{\alpetu{1}/2+\eps\leq t\leq \gamma} \frac{\mu_2}{\mu_1}\frac{L^N_1(N^t)}{N^t}\, \bigg|\frac{L^N_2(N^t)}{N^t}-\mu_2(\rho_2-\rho_1)\bigg|>\eta\bigg)\\
& \qquad\qquad+ \P\bigg(\log N\, \int_{\alpetu{1}/2+\eps}^\gamma \bigg(\frac{L^N_2(N^u)}{N^u}-\mu_2(\rho_2-\rho_1)\bigg)^2\, \diff u >\eta\bigg)\\
& \qquad\qquad + \P\bigg(\int_{\alpetu{1}/2+\eps}^\gamma |\psi^N(L^N_1(N^u),L^N_2(N^u),u)|\, \diff u>\eta \bigg).
\end{align*}
Again, by the Markov inequality and the estimates obtained before, each of the six terms in the right hand side of the last relation converges to $0$ as $N$ becomes large. This shows the desired uniform convergence on the time interval $(\alpetu{1}/2,\gamma]$.

The third and last step extends the convergence result to the whole interval $(\alpetu{1}/2,\alpetu{1}\wedge 1)$. Let $\eps \in (0,\alpetu{2}-\alpetu{1})$ and $\eta \in (0,\eps)$.
Let $\nu_0^N$ be defined by
\[
\nu_0^N\stackrel{\text{def.}}{=} \inf\{s\geq \gamma:\, L^N_2(N^s)\leq N^{s-\eta}\}.
\]
The results of the first part of this proof show that, for some constant $C_{10}$,
\[
\lim_{N\to+\infty} \P\left( L^N_2(N^\gamma)\geq C_{10}N^{\gamma}, L^N_1(N^\gamma)\in [N^{\alpetu{1}-\gamma-\eps}, N^{\alpetu{1}-\gamma+\eps}]\right)=1.
\]
Thus, $\nu_0^N>\gamma$ and the  process $(L^N_1(s),N^\gamma\leq s \leq N^{\nu_0^N})$ is stochastically bounded from above by $\lceil N^{\alpetu{1}-\gamma+\eps}\rceil + X_{3,+}(\cdot - N^{\gamma})$, where $(X_{3,+}(s))$ is a birth and death process reflected at $0$ for which the transition $x\mapsto x+1$ occurs at rate $\lambda_1$ and $x\mapsto x-1$ at rate
\[
\mu_1\, \frac{(\alpetu{1}-\gamma+\eps) + (\gamma-\eta)}{1+\alpetu{1}-\gamma+\eps + \gamma-\eta} = \lambda_1+\mu_1\, \frac{\eps -\eta}{(1+\alpetu{1})(1+\alpetu{1}+\eps-\eta)},
\]
since $\mu_1\alpetu{1}/(1+\alpetu{1})=\lambda_1$.  Since $\eps<\eta$, the  drift of $X$ is negative and one can conclude that
\[
\lim_{N\rightarrow\infty}\P\bigg(\sup_{t\in [\gamma,\nu_0^N]}L^N_1(N^t)\geq 2N^{\alpetu{1}-\gamma+\eps}\bigg) =0.
\]
As a consequence, the process  $(L^N_2(N^\gamma+s),0\leq s\leq N^{\nu_0^N}-N^\gamma)$ is stochastically bounded from below by the birth and death process $(X_4(s))$,  with $X_4(0)=C_{10}N^\gamma$,  birth rate $\lambda_2$ and transitions $x\mapsto x-1$ occurring at rate
\[
\mu_2\frac{(\alpetu{1}-\gamma+\eps)+t}{1+\alpetu{1}-\gamma+\eps+t}.
\]
Let $\Delta\stackrel{\text{def.}}{=} \alpetu{2}-(\alpetu{1}+\eps)$. By our choice of $\eps$, $\Delta>0$ and
\[
\lambda_2> \mu_2\frac{(\alpetu{1}-\gamma+\eps)+t}{1+\alpetu{1}-\gamma+\eps+t} \quad \hbox{as long as}\quad t-\gamma \leq \Delta.
\]
 Hence, there exists $C_{11}>0$ such that
\[
\lim_{N\rightarrow \infty}\P\bigg(\inf_{t\in [\gamma,\gamma+\Delta]}\frac{L^N_2(N^t)}{N^t}<C_{11}\bigg) = 0,
\]
and in particular $\nu_0^N> \gamma + \Delta$. Now that it has been proved that, with probability tending to $1$, $L^N_2(N^t)\geq C_{11} N^t$ for any $t\in[\gamma,\gamma+\Delta]$, one can adapt Lemmas~\ref{lem: first bounds} and~\ref{lem: uniform estimate} and the first part of the proof of Theorem~\ref{th: averaging} to show that the uniform convergence holds on the time interval $(\alpetu{1}/2,(\gamma+\Delta)\wedge \alpetu{1}\wedge 1)$ too.

Finally, since the definition of $\Delta$ does not depend on $\gamma$, one can proceed by induction (in finitely many steps) and conclude that
\[
\lim_{N\rightarrow \infty}\bigg(\bigg(Y_1^N(N^t)+Y_2^N(N^t),\frac{L^N_2(N^t)}{N^t}\bigg)\bigg)=(\alpetu{1},\mu_2(\rho_2-\rho_1))
\]
 for the convergence in distribution of processes on $(\alpetu{1}/2,\alpetu{1}\wedge 1)$. The proposition is proved.
\end{proof}
Proposition~\ref{prop: phase 2} establishes the behaviour of $L^N_2$ stated in Theorem~\ref{th: averaging}. There remains to show that, on this interval of time, the convergence
\[
\lim_{N\to+\infty} \left(\frac{L_1^N(N^t)}{N^{\alpetu{1}-t}}\right)=\left(\frac{1}{\mu_2(\rho_2-\rho_1)}\right)
\]
holds. Equivalently, one shows the following important result.
\begin{proposition}\label{prop: averaging}
 The convergence in distribution
\[
\lim_{N\rightarrow \infty} \bigg(\frac{L^N_1(N^t)L^N_2(N^t)}{N^{\alpetu{1}}}\bigg) = (1)
\]
holds on the time interval  $(\alpetu{1}/2,\alpetu{1} \wedge 1)$.
\end{proposition}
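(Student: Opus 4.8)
The plan is to reduce the assertion to an $L_2$-estimate for the scaled product $V^N(t)\stackrel{\text{def.}}{=}L^N_1(N^t)L^N_2(N^t)/N^{\alpetu{1}}$ and then to upgrade it to uniform convergence on compact subintervals by a maximal inequality, following closely the scheme used for Proposition~\ref{prop: phase 2}. The structural fact I would exploit is that, since $L^N_0\equiv N$ on this time scale, the capacity seen by nodes $1$ and $2$ is $W(L^N(N^t))=\log Q^N(t)/(\log(1{+}N)+\log Q^N(t))$ with $Q^N(t)=(1{+}L^N_1(N^t))(1{+}L^N_2(N^t))$, so it depends on the two queues essentially only through their product. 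Proposition~\ref{prop: phase 2} already gives $Y_1^N(N^t)+Y_2^N(N^t)\to\alpetu{1}$ and $L^N_2(N^t)/N^t\to\mu_2(\rho_2{-}\rho_1)$, whence $Q^N(t)=V^N(t)N^{\alpetu{1}}(1{+}\mathrm{o}(1))$, and a first-order expansion yields $\rho_1-W(L^N(N^t))=-\log V^N(t)/((1{+}\alpetu{1})^2\log N)+\mathrm{o}(1/\log N)$ on the interval of interest. Thus $\rho_1-W$ is small and of sign opposite to $\log V^N$: this is the self-regulating mechanism driving $V^N$ towards $1$, the analogue for the product of the equilibrium effect of the $J{=}1$ network in~\cite{RV}.

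Concretely, I would compute, from the SDE~\eqref{SDE1} and the time change $s=N^t$, the generator $G^N$ of $(L^N_1(N^t),L^N_2(N^t),t)$ on a test function of the form $F(l_1,l_2,t)=\frac12(l_1l_2/N^{\alpetu{1}}-1)^2$, adding if needed a lower-order space-time harmonic correction in the spirit of the cross term of~\eqref{approx F} so as to absorb the drift generated by the genuine growth of $L^N_2$. Using the expansion of $W$, Proposition~\ref{prop: phase 2} and the a priori bounds of Lemma~\ref{lem: first bounds}, this should yield, on a subinterval $[\alpetu{1}/2+\eps,\gamma]$ and up to events of negligible probability, a bound of the shape $G^NF\leq -c_1N^{2t-\alpetu{1}}(V^N(t){-}1)^2+R^N(t)$ for some $c_1>0$, the remainder $R^N(t)$ collecting the jump-variance terms (the dominant one, of order $N^{3t-2\alpetu{1}}\log N$, coming from the jumps of $L^N_1$) together with the errors in the expansions of $W$ and of $L^N_2(N^t)$. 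Here the hypothesis $\alpetu{1}/2<t<\alpetu{1}\wedge 1$ is precisely what makes the exponents cooperate: the restoring coefficient $N^{2t-\alpetu{1}}$ is an unbounded power of $N$, whereas after taking expectations and applying Gronwall's Lemma the resulting bound is of order $\log^2 N/N^{4t-2\alpetu{1}}+N^{t-\alpetu{1}}\log N$, both terms going to $0$. As in Lemma~\ref{lem: first bounds}, the process $(L^N_1(s))$ is kept under control by coupling it, above its typical size, with a reflected birth-and-death process of small negative drift and invoking Proposition~\ref{King} to bound the second moment of its running supremum; this keeps $R^N$ and the initial term manageable even though $V^N$ may start as a small power of $N$. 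One concludes $\E((V^N(t){-}1)^2)\to 0$ pointwise, first on the subinterval where Lemma~\ref{lem: first bounds} is available and then on all of $(\alpetu{1}/2,\alpetu{1}\wedge 1)$ by the same finite induction (using the gap $\Delta$) as at the end of the proof of Proposition~\ref{prop: phase 2}; Doob's maximal inequality applied to the martingale part then gives the uniform convergence on compact subsets, hence the stated convergence in distribution of processes.

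I expect two points to be the main obstacle. The first is the bookkeeping of the competing powers of $N$ in $R^N$ and the verification that the restoring term really dominates throughout the open interval, which is why the estimates must be run on $[\alpetu{1}/2+\eps,(\alpetu{1}\wedge 1)-\eps]$ with $\eps\to 0$ at the end — mirroring the fact, visible in the statement, that the convergence genuinely fails at the endpoints. The second, more delicate, is the regime where $V^N$ is polynomially large: the linearisation $\log V^N\approx V^N-1$ then breaks down and the restoring only behaves like $-c_1N^{2t-\alpetu{1}}(V^N{-}1)\log V^N$, so one first has to show, via excursion estimates for $(L^N_1(s))$ along the lines of Lemma~\ref{lem: uniform estimate}, that $V^N$ enters and remains in a bounded neighbourhood of $1$, and only then run the quadratic Gronwall argument. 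Everything else should be a routine, if lengthy, adaptation of the machinery already developed for Proposition~\ref{prop: phase 2}.
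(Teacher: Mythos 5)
Your proposal contains the right local picture---the self-regulating mechanism via $\rho_1-W(L^N(N^t))\approx -\log V^N(t)/\big((1+\alpetu{1})^2\log N\big)$ with $V^N(t)=L^N_1L^N_2(N^t)/N^{\alpetu{1}}$, and the power-counting for the drift and the carr\'e du champ is correct---but the route you sketch is quite different from the one in the paper, and it has a genuine gap at precisely the point you flag as the ``second, more delicate'' obstacle. The quadratic Gronwall argument on $\E\big((V^N-1)^2\big)$ only closes once you know that $V^N$ is confined to a \emph{fixed} bounded interval $[1/C,C]$: otherwise $(V-1)\log V$ cannot be bounded below by $c_C(V-1)^2$, the cross term from the growth of $L_2^N$ cannot be absorbed by Young's inequality, and (since $\rho_1-W$ is linear in $\log V$, not in $V-1$) the restoring coefficient degrades to $N^{2t-\alpetu{1}}\log V/V$, which is no longer an unbounded power of $N$ when $V$ is polynomially large. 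You propose to obtain the confinement ``via excursion estimates along the lines of Lemma~\ref{lem: uniform estimate},'' but that lemma only delivers $Y_1^N(N^t)+Y_2^N(N^t)\to\alpetu{1}$, i.e.\ $V^N\in[N^{-\eps},N^{\eps}]$ for every $\eps>0$, which is a sub-polynomial bound and not an $O(1)$ bound; the same objection applies to the initial control at $t=\alpetu{1}/2+\eta$ (indeed $V^N(\alpetu{1}/2)\to 0$ by Proposition~\ref{prop: phase 1}, since the fluid coefficient for $L_1^N$ vanishes there, so the entry into a neighbourhood of $1$ is itself a nontrivial event). Upgrading the sub-polynomial bound to $V^N\in[1/C,C]$ uniformly on the interval is precisely the hard content of the proposition, because the restoring drift on $L_1^N$ is only $O(1/\log N)$ and the reference level $N^{\alpetu{1}}/L_2^N(s)$ itself drifts; you are forced into the paper's covering of $[N^{\alpetu{1}/2+\eta},N^{(\alpetu{1}\wedge1)-\eta}]$ by $\sim N^{\alpetu{1}/2}$ blocks of length $N^{\alpetu{1}/2}$ (over which $L_2^N$ is essentially frozen) with excursion-by-excursion overshoot and sojourn estimates---exactly Lemmas~\ref{lem: first excursion}--\ref{lem: initial condition}, proved via couplings with reflected birth--death processes of drift $\pm C\eps/\log N$, Proposition~\ref{King}, and a union bound.

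So the two arguments do not coexist as ``a priori bound plus Gronwall''; once you have developed the block-covering and excursion machinery, running it for arbitrary $\eps>0$ already gives $\P\big(\sup V^N>1+2\eps\big)\to0$ and the lower bound, hence $V^N\to1$, and the Gronwall step becomes superfluous. The paper's proof of Proposition~\ref{prop: averaging} in fact uses no generator computation, no harmonic correction and no $L_2$-estimate for the product: it is a deterministic-looking chaining argument on those $N^{\alpetu{1}/2}$ sub-intervals, with Lemma~\ref{lem: initial condition} supplying the entry of $V^N$ into a $(1{+}3\eps/2)$ neighbourhood, Lemma~\ref{lem: first excursion} bounding overshoots to $1{+}2\eps$, Lemma~\ref{lem: length first excursion} bounding the sojourn above $1{+}\eps$, and Lemma~\ref{lem: rest of excursion} closing the induction to the next block. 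If you insist on the martingale/Gronwall route, you would have to prove those same excursion lemmas first and also set up the argument with a localisation at the exit time of $[1/C,C]$ (otherwise $F=(V-1)^2/2$ is not controlled under the available moment bounds, which only give $L_1^N=O(N^{\alpetu{1}/2})$ in $L^2$, so $\E[V^N(N^t)^2]$ grows like $N^{2t-\alpetu{1}}$); at that stage the quadratic Gronwall no longer shortens the proof.
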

The proof uses several technical lemmas, whose proofs are postponed until the end of the proof of Proposition~\ref{prop: averaging}.
It is based on the idea that, for a given value of $L^N_2$, if $L^N_1$ moves too far apart from its equilibrium value $N^{\alpetu{1}}/L^N_2$ (corresponding to the point where the drift of $L^N_1$ cancels), then it is driven back to this value in much less time than $L^N_2$ needs to change.

More precisely, for $\eta\in (0,\alpetu{1}/2)$, the time interval $[N^{\alpetu{1}/2+\eta},N^{(\alpetu{1}\wedge 1)-\eta}]$ can be covered by $N^{\alpetu{1}/2}$ (at most) sub-intervals of length $N^{\alpetu{1}/2}$.  One will first  consider an interval of the form $[T_N,T_N+N^{\alpetu{1}/2}]$ and $\eps{>}0$ such that $L^N_1L^N_2(T_N){<}(1{+} 3\eps/{2})N^{\alpetu{1}}$.  As one will see, with probability tending to $1$  the process $(L^N_1L^N_2(s))$ does not exceed the value $(1+2\eps)N^{\alpetu{1}}$ on the time interval $[T_N,T_N+N^{\alpetu{1}/2}]$.

In a second step, one will show that $L^N_1L^N_2(T_N+N^{\alpetu{1}/2}){\leq}(1{+}3\eps/{2})N^{\alpetu{1}}$, so that  the same result can be applied to the next time interval of width $N^{\alpetu{1}/2}$.

First, let $\tau^N(T_N)$ be the stopping time defined by
$$
\tau^N(T_N) \stackrel{\text{def.}}{=} \inf \left\{s\geq T_N\, :\, L^N_1(s)L^N_2(s) \leq (1+\eps)N^{\alpetu{1}} \right\}\wedge (T_N+N^{\alpetu{1}/2}).
$$
Of course, when $L^N_1L^N_2(T_N)\leq (1+\eps)N^{\alpetu{1}}$ one has $\tau^N(T_N)=T_N$. On the other hand, when $L^N_1L^N_2(T_N)/N^{\alpetu{1}} \in ( 1+\eps, 1+3\eps /{2})$, the following lemma controls the probability that $L^N_1L^N_2$ exceeds $(1+2\eps)N^{\alpetu{1}}$ on the time interval $[T_N,\tau^N(T_N)]$.
\begin{lemma}\label{lem: first excursion}
Assume that $L^N_1L^N_2(T_N)/N^{\alpetu{1}} \in ( 1+\eps, 1+3\eps/{2})$. Then, there exists a constant $C_1>0$ which is independent of the value of $L^N_1L^N_2(T_N)$ and such that
\[
\P\left(\sup_{s\in [T_N,\tau^N(T_N)]} L^N_1L^N_2(s)\geq (1+2\eps)N^{\alpetu{1}}\right) \leq \exp\bigg(- C_1\eps \frac{N^{\alpetu{1}}}{T_N\log N}\bigg).
\]
\end{lemma}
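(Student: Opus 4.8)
The plan is to reduce the excursion of the product $L^N_1L^N_2$ to an ordinary excursion of the single coordinate $L^N_1$. The heuristic is that on a window $[T_N,T_N+N^{\alpetu{1}/2}]$ the coordinate $L^N_2$ — which by Proposition~\ref{prop: phase 2} is of order $T_N$, hence $\gg N^{\alpetu{1}/2}$ — moves only by a factor $1+o(1)$; so there $L^N_1$ behaves like a reflected birth-and-death process whose death rate exceeds its birth rate $\lambda_1$ by a small but strictly positive amount as long as the product stays above $(1+\eps)N^{\alpetu{1}}$, and Proposition~\ref{King} then produces the exponential estimate. As in the rest of the section, $L^N_0$ is treated as identically $N$.

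The first step is to freeze $L^N_2$. Write $M=L^N_2(T_N)$ and $\ell=N^{\alpetu{1}}/M$, the value at which the drift of $L^N_1$ vanishes for the current value of $L^N_2$. Decomposing $L^N_2$ into its compensator and a martingale with jumps of size one and predictable quadratic variation at most $(\lambda_2{+}\mu_2)N^{\alpetu{1}/2}$ over the window, a Bernstein/Freedman inequality shows that the event $\mathcal G_N$ on which this martingale stays below $N^{\alpetu{1}/2}\log N$ in absolute value on $[T_N,T_N+N^{\alpetu{1}/2}]$ satisfies $\P(\mathcal G_N^c)\leq\exp(-cN^{\alpetu{1}/2}\log N)$; on $\mathcal G_N$ one has $L^N_2(s)=M(1+o(1))$ uniformly on the window, using that $M\geq c'T_N\geq c'N^{\alpetu{1}/2+\eta}$ from Proposition~\ref{prop: phase 2}. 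Moreover $L^N_2$ is dominated by its arrival process, so $M\leq 2\lambda_2T_N$ off an event of exponentially small probability, whence $\ell\geq N^{\alpetu{1}}/(2\lambda_2T_N)$, which tends to infinity since $T_N\leq N^{(\alpetu{1}\wedge1)-\eta}$. As these two failure probabilities are far smaller than the bound to be proved, it suffices to argue on $\mathcal G_N\cap\{M\leq 2\lambda_2T_N\}$. There the ``dictionary'' between the two excursions reads: $L^N_1(T_N)<(1+3\eps/2)\ell$, while $\{\sup_{[T_N,\tau^N]}L^N_1L^N_2\geq(1+2\eps)N^{\alpetu{1}}\}$ forces $L^N_1$ to reach a value $\geq(1+7\eps/4)\ell$ for $N$ large; so on that event $L^N_1$ must climb by at least $\eps\ell/4\geq\eps N^{\alpetu{1}}/(8\lambda_2T_N)$.

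Next comes the drift estimate. On $[T_N,\tau^N)$ one has $(1+L^N_1)(1+L^N_2)\geq L^N_1L^N_2>(1+\eps)N^{\alpetu{1}}$ by definition of $\tau^N$, and $(1+L^N_1)(1+L^N_2)=L^N_1L^N_2(1+O(N^{-\eta}))$ because $L^N_1+L^N_2=O(T_N)$ while $L^N_1L^N_2\geq N^{\alpetu{1}}\gg T_N$; a short expansion of $W$ from~\eqref{WL} around $\rho_1=\alpetu{1}/(1+\alpetu{1})$ then gives $W(L^N)-\rho_1\geq c_0\eps/\log N$ on $[T_N,\tau^N)$, for some $c_0=c_0(\eps)>0$. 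Hence there the death rate $\mu_1W(L^N)$ of $L^N_1$ exceeds $\lambda_1$ by at least $c_1\eps/\log N$, with $c_1=\mu_1c_0$. One can then couple $(L^N_1(s))_{s\in[T_N,\tau^N)}$ with a birth-and-death process $(\tilde X(s))$ started at $L^N_1(T_N)$, with birth rate $\lambda_1$ and constant death rate $\lambda_1+c_1\eps/\log N$, in such a way that $L^N_1(s)\leq\tilde X(s)$ throughout (the birth rates agree, and the death rate of $L^N_1$ dominates that of $\tilde X$ on $[T_N,\tau^N)$). On the event under consideration $\tilde X$ must then climb by $\geq\lceil\eps\ell/4\rceil$ above its starting value, which by Proposition~\ref{King}(1) applied to the shifted process has probability at most
\[
\left(\frac{\lambda_1}{\lambda_1+c_1\eps/\log N}\right)^{\lceil\eps\ell/4\rceil}\leq\exp\left(-c_2\,\eps^2\,\frac{N^{\alpetu{1}}}{T_N\log N}\right)
\]
for $N$ large. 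Adding the two negligible failure probabilities, and noting that the statement only requires $C_1$ to be independent of the value of $L^N_1L^N_2(T_N)$ (it may depend on $\eps$ and on the parameters of the network), this yields the announced bound.

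The only genuinely non-routine point is this freezing of $L^N_2$ together with the refined expansion of $W$: one has to show not merely that the drift of $L^N_1$ is asymptotically non-positive above the equilibrium curve $L^N_1L^N_2=N^{\alpetu{1}}$, but that it is of exact order $-\eps/\log N$ uniformly on $[T_N,\tau^N)$, and that $\mathcal G_N$ fails with a probability negligible against the target bound (and, downstream, against the $N^{\alpetu{1}/2}$-fold union over the sub-intervals covering $[N^{\alpetu{1}/2+\eta},N^{(\alpetu{1}\wedge1)-\eta}]$). Everything after that is a direct application of Proposition~\ref{King}.
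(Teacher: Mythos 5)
Your proof is correct and follows essentially the same route as the paper's: condition on the event from Proposition~\ref{prop: phase 2} that pins down $L^N_2(s)/s$ near $\kappa=\mu_2(\rho_2-\rho_1)$, expand $W$ to show that the death rate of $L^N_1$ exceeds $\lambda_1$ by $\Theta(\eps/\log N)$ throughout $[T_N,\tau^N)$, couple $L^N_1$ with a birth-and-death process with constant supercritical death rate, and invoke Proposition~\ref{King} to get the exponential bound (with the extra factor of $\eps$ absorbed into $C_1$, which is allowed). The Bernstein/Freedman step you insert to freeze $L^N_2$ at $M(1+o(1))$ over the window is harmless but not needed: the paper avoids it by choosing $\delta$ small relative to $\eps$ in the event $A_\delta$, which already forces the target ceiling $(1+2\eps)N^{\alpetu{1}}/[(\kappa+\delta)(T_N+N^{\alpetu{1}/2})]$ above the starting point $\ell_N$.
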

The next lemma shows that $\tau^N(T_N)$  is negligible compared to $N^{\alpetu{1}/2}$.
\begin{lemma}\label{lem: length first excursion}
Suppose that $L^N_1L^N_2(T_N)/N^{\alpetu{1}} \in ( 1+\eps, 1+3\,\eps/2)$. Then there exists $C_2>0$ independent of the  value of $L^N_1L^N_2(T_N)$ and such that
\[
\P\left(\tau^N(T_N) -T_N> \frac{N^{\alpetu{1}/2}}{\log N}\right) \leq  \exp\left(-C_2\eps \frac{N^{\alpetu{1}/2}}{(\log N)^2}\right).
\]
\end{lemma}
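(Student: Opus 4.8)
The plan is to estimate the time spent by $(L_1^NL_2^N(s))$ above the level $(1{+}\eps)N^{\alpetu{1}}$ before it comes back down, by showing that in this region the product has a strong negative drift of order $N^{\alpetu{1}}/T_N$ relative to the time-scale, so the return occurs in time of order $T_N\log N/N^{\alpetu{1}}\cdot N^{\alpetu{1}} = \mathcal{O}(N^{\alpetu{1}/2}/\log N)$ with overwhelming probability. First I would work on the event (of probability tending to $1$, by Proposition~\ref{prop: phase 2} and Lemma~\ref{lem: first bounds}) on which, throughout $[T_N, T_N{+}N^{\alpetu{1}/2}]$, one has $L_2^N(s)$ of the order $s/\mu_2(\rho_2{-}\rho_1)$ — more precisely bounded above and below by constant multiples of $N^{\alpetu{1}/2}$ since $T_N \in [N^{\alpetu{1}/2+\eta}, N^{\alpetu{1}-\eta}]$ — together with the bound from Lemma~\ref{lem: first excursion} that on $[T_N,\tau^N(T_N)]$ the product does not exceed $(1{+}2\eps)N^{\alpetu{1}}$. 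On this event $L_1^N(s)$ is confined to an interval of the form $[c_1 N^{\alpetu{1}}/s, c_2 N^{\alpetu{1}}/s]$ so that $\log(1{+}L_1^N(s))/\log N$ is close to $\alpetu{1} - (\log s/\log N)$, and the instantaneous service rate of node $1$, namely $\mu_1 (Y_1^N{+}Y_2^N)/(1{+}Y_1^N{+}Y_2^N)$, exceeds $\lambda_1$ by an amount of order $\eps/\log N$ whenever $L_1^NL_2^N(s) > (1{+}\eps)N^{\alpetu{1}}$.

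The heart of the argument is then a drift computation for the scalar process $(L_1^NL_2^N(s))$ while it stays above $(1{+}\eps)N^{\alpetu{1}}$. Writing the jumps: $L_1^N$ increases by $1$ at rate $\lambda_1$ (multiplying the product by roughly $1{+}1/L_1^N$, i.e.\ an increment $\approx L_2^N$), decreases by $1$ at the service rate just described (increment $\approx -L_2^N$); and symmetrically for $L_2^N$ with increments $\approx \pm L_1^N$. The arrival–departure contributions of node $2$ roughly cancel in expectation up to the positive drift $\lambda_2 - \mu_2 W \approx \mu_2(\rho_2{-}\rho_1) L_1^N$, which however is of order $N^{\alpetu{1}}/s = \mathcal{O}(N^{\alpetu{1}/2})$, i.e.\ small compared to what node $1$ contributes. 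The net drift of $L_1^NL_2^N$ is therefore $\leq L_2^N\cdot(\lambda_1 - \text{service rate of }1) + \mathcal{O}(N^{\alpetu{1}/2}) \leq -c\,\eps\, N^{\alpetu{1}/2}/\log N$ for a constant $c>0$, on the region of interest. Since the increments of $L_1^NL_2^N$ are bounded by $\mathcal{O}(L_1^N) + \mathcal{O}(L_2^N) = \mathcal{O}(N^{\alpetu{1}/2})$ and the total jump rate is $\mathcal{O}(\log N)$ (recall the service rates carry a $\log N$), the quadratic variation accumulated over a time window of length $N^{\alpetu{1}/2}/\log N$ is $\mathcal{O}(N^{\alpetu{1}}\cdot\log N\cdot N^{\alpetu{1}/2}/\log N) = \mathcal{O}(N^{3\alpetu{1}/2})$; meanwhile the drift over that window pulls the product down by an amount of order $\eps N^{\alpetu{1}}/\log N$, which dominates the typical fluctuation $\mathcal{O}(N^{3\alpetu{1}/4})$ as soon as $\alpetu{1}/\log N \gg N^{-\alpetu{1}/4}$, true for large $N$. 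I would make this precise by stopping the product at $\tau^N(T_N)$, subtracting the (negative) drift to form a martingale, and applying Doob's maximal inequality or an exponential (Bernstein-type) martingale bound, exactly in the spirit of Proposition~\ref{King}; the sub-Gaussian-plus-drift structure yields $\P(\tau^N(T_N) - T_N > N^{\alpetu{1}/2}/\log N) \leq \exp(-C_2\eps N^{\alpetu{1}/2}/(\log N)^2)$, the stated bound, the extra $\log N$ in the denominator coming from the $\log N$ factor in the jump rates inflating the quadratic variation.

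The main obstacle, I expect, is controlling $L_2^N$ carefully enough over the whole excursion: one needs two-sided bounds $c_1 N^{\alpetu{1}/2} \leq L_2^N(s) \leq c_2 N^{\alpetu{1}/2}$ uniformly on $[T_N, \tau^N(T_N)]$ to convert "product above $(1{+}\eps)N^{\alpetu{1}}$" into "$L_1^N$ large enough that its service rate beats $\lambda_1$ by $\gtrsim \eps/\log N$", and the naive a priori bounds from Lemma~\ref{lem: first bounds} give only $\sup L_2^N/s < 2\lambda_2$ and a matching lower bound, which is enough but must be combined consistently with Lemma~\ref{lem: first excursion}'s confinement of the product. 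A secondary delicate point is that the drift coefficient $\eps/\log N$ is itself vanishing, so one cannot treat the excursion as a Markov chain with a fixed negative drift; instead one tracks the signed quantity $L_1^NL_2^N(s) - (1{+}\eps)N^{\alpetu{1}}$ directly as a supermartingale after drift compensation and uses that its compensator grows at rate $\gtrsim \eps N^{\alpetu{1}/2}/\log N$ while the process is positive, so the time to return to $0$ concentrates. Once these estimates are in place, summing the failure probabilities over the $\mathcal{O}(N^{\alpetu{1}/2})$ sub-intervals — each bounded by $\exp(-C_2\eps N^{\alpetu{1}/2}/(\log N)^2)$ — still gives a vanishing total, which is what the subsequent induction (combining Lemmas~\ref{lem: first excursion} and~\ref{lem: length first excursion}) will require to push the control of $L_1^NL_2^N$ across the full interval $(\alpetu{1}/2, \alpetu{1}{\wedge}1)$ and conclude the proof of Proposition~\ref{prop: averaging}.
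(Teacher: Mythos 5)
Your plan takes a genuinely different route from the paper. You propose to track the scalar product process $(L_1^NL_2^N(s))$ directly, compute its infinitesimal drift, compensate it to a martingale, and apply a Bernstein/exponential-martingale bound. The paper instead reduces the problem to a one-dimensional one: working on the event $A_\delta$ from the proof of Lemma~\ref{lem: first excursion} (which pins $L_2^N(s)$ to within a $\delta$-band around $\kappa s$), it observes that the worst initial value is $\tilde\ell_N=(1{+}3\eps/2)N^{\alpetu{1}}/((\kappa{-}\delta)T_N)$, that $\tau^N(T_N)$ must occur once $L_1^N$ drops to $m_N=(1{+}\eps)N^{\alpetu{1}}/[(\kappa{+}\delta)(T_N{+}N^{\alpetu{1}/2})]$, and then couples $L_1^N$ with a reflected birth-and-death walk $X_+$ with constant negative drift $-C\eps/\log N$ and applies a large-deviation estimate on the centered walk. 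The one-dimensional reduction is substantially cleaner because the jump structure of the product is two-dimensional (jumps of size $\pm L_2^N$ and $\pm L_1^N$ at comparable rates) and its quadratic variation has to be tracked carefully across the whole admissible range of $T_N$.

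Your plan would need several quantitative corrections. First, over $[T_N, T_N{+}N^{\alpetu{1}/2}]$ one has $L_2^N(s)\approx \kappa s \approx \kappa T_N$, and $T_N$ ranges over $[N^{\alpetu{1}/2+\eta},N^{(\alpetu{1}\wedge 1)-\eta}]$; so $L_2^N$ is of the order of $T_N$, \emph{not} of the order of $N^{\alpetu{1}/2}$ as you assert. Consequently the jumps of the product are $\mathcal{O}(T_N)+\mathcal{O}(N^{\alpetu{1}}/T_N)$, not $\mathcal{O}(N^{\alpetu{1}/2})$. Second, in the original time variable $s$ the jump rates of the system are $\mathcal{O}(1)$; the $\log N$ factor you invoke only appears after the time-change to $N^t$, and you are arguing in real time here. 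Third, with these fixed, the quadratic variation over a window of length $N^{\alpetu{1}/2}/\log N$ is $\mathcal{O}(T_N^2 N^{\alpetu{1}/2}/\log N)$ rather than $\mathcal{O}(N^{3\alpetu{1}/2})$. These errors happen to compensate in the final estimate, because the drift over that window also scales with $T_N$ (it is $\gtrsim \eps T_N N^{\alpetu{1}/2}/(\log N)^2$, dominating both the target displacement $(\eps/2)N^{\alpetu{1}}$ and the standard deviation $T_N N^{\alpetu{1}/4}/\sqrt{\log N}$ for $N$ large), so the overall argument survives: a careful version of your computation yields an exponent of order $\eps^2 N^{\alpetu{1}/2}/(\log N)^3$, which is adequate for the subsequent union bound. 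Nevertheless the intermediate size estimates as written are wrong and need to carry the $T_N$-dependence explicitly; and the final reduction to a one-dimensional coupling, as the paper does, sidesteps these subtleties entirely.
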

The third lemma controls the probability that $(L^N_1L^N_2(s)/N^{\alpetu{1}})$ reaches again the value $(1+3\eps/{2})$ when it starts below $1+\eps$.
\begin{lemma}\label{lem: rest of excursion}
Suppose that $L^N_1L^N_2(T_N)\leq (1+\eps)N^{\alpetu{1}}$. Then, there exists $C_3>0$ and $D_3>0$ independent of the initial value of $L^N_1L^N_2$ and such that
\begin{multline*}
\P\left(\sup_{s\in [T_N,T_N+N^{\alpetu{1}/2}]} L^N_1L^N_2(s) \geq \bigg(1+ \frac{3}{2}\, \eps\bigg)N^{\alpetu{1}}\right) \\ \leq D_3N^{\alpetu{1}/2}\exp\bigg(- C_3\eps \frac{N^{\alpetu{1}}}{T_N\log N}\bigg).
\end{multline*}
\end{lemma}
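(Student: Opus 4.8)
The idea is that $P^N\stackrel{\text{\rm def.}}{=}L^N_1L^N_2$ feels a restoring drift towards $N^{\alpetu{1}}$, so that once it has dropped below $(1{+}\eps)N^{\alpetu{1}}$ it can climb back past $(1{+}3\eps/2)N^{\alpetu{1}}$ only against that drift, and this is an event that Proposition~\ref{King} controls. Throughout, the interval $[T_N,T_N{+}N^{\alpetu{1}/2}]$ is contained in $[N^{\alpetu{1}/2+\eta},N^{(\alpetu{1}\wedge1)-\eta}]$, and I would work on a good event $\cal G_N$ of probability $1{-}o(1)$ on which, by Lemmas~\ref{lem: first bounds} and~\ref{lem: uniform estimate}, Proposition~\ref{prop: phase 2}, and the coupling of $L^N_0$ with its input and output processes as in~\cite{RV}, one has $L^N_0\equiv N$ up to negligible fluctuations and $L^N_2(s)\in[c_1T_N,c_2T_N]$ for $s$ in this interval, with $c_2/c_1=1{+}o(1)$ since $L^N_2$ varies only by a factor $1{+}o(1)$ over any subinterval of length $N^{\alpetu{1}/2}$. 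On $\cal G_N$, the hypothesis $P^N(T_N)\leq(1{+}\eps)N^{\alpetu{1}}$ gives $L^N_1(T_N)\leq(1{+}\eps)N^{\alpetu{1}}/(c_1T_N)$, whereas $P^N(s)\geq(1{+}3\eps/2)N^{\alpetu{1}}$ forces $L^N_1(s)\geq(1{+}3\eps/2)N^{\alpetu{1}}/(c_2T_N)$; since $c_2/c_1=1{+}o(1)$, reaching the latter level from the former requires $L^N_1$ to increase by at least an amount $m_N$ of order $\eps N^{\alpetu{1}}/T_N$.

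The heart of the argument is the bound on $L^N_1$ on this time interval. Put $\ell_N:=(1{+}\eps)N^{\alpetu{1}}/(c_2T_N)$. Whenever $L^N_1(s)\geq\ell_N$ one has, on $\cal G_N$, $P^N(s)\geq(1{-}o(1))(1{+}\eps)N^{\alpetu{1}}$, and therefore, using $\log((1{+}L^N_1)(1{+}L^N_2))=\log P^N{+}o(1)$ together with $\lambda_1=\mu_1\alpetu{1}/(1{+}\alpetu{1})$, the service rate $\mu_1W(L^N(s))$ of node~$1$ exceeds $\lambda_1$ by at least $C\mu_1\eps/\log N$ for some $C>0$ and all $N$ large. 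Hence, exactly as in the proof of Lemma~\ref{lem: first bounds}, one can construct a coupling under which $L^N_1(s)\leq\ell_N+X_+(s{-}T_N)$ for all $s\in[T_N,T_N{+}N^{\alpetu{1}/2}]$, where $(X_+(u))$ is a birth-and-death process reflected at $0$, started at $(L^N_1(T_N){-}\ell_N)^+=o(N^{\alpetu{1}}/T_N)$, with birth rate $\lambda_1$ and death rate $\lambda_1{+}C\mu_1\eps/\log N$. Since $m_N$ exceeds the initial offset of $X_+$ by a term of the same order $\eps N^{\alpetu{1}}/T_N$, the event $\{\sup_{s\in[T_N,T_N+N^{\alpetu{1}/2}]}P^N(s)\geq(1{+}3\eps/2)N^{\alpetu{1}}\}\cap\cal G_N$ is contained in $\{\sup_{0\leq u\leq N^{\alpetu{1}/2}}X_+(u)\geq m_N'\}$ for some $m_N'\asymp\eps N^{\alpetu{1}}/T_N$.

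Proposition~\ref{King}(b) applied to $(X_+(u))$ with $T=N^{\alpetu{1}/2}$ then gives
\[
\P\Big(\sup_{0\leq u\leq N^{\alpetu{1}/2}}X_+(u)\geq m_N'\Big)\leq\big(\lambda_1N^{\alpetu{1}/2}{+}1\big)\bigg(\frac{\lambda_1}{\lambda_1{+}C\mu_1\eps/\log N}\bigg)^{m_N'}\leq D_3\,N^{\alpetu{1}/2}\exp\bigg({-}C_3\,\eps\,\frac{N^{\alpetu{1}}}{T_N\log N}\bigg),
\]
the prefactor $N^{\alpetu{1}/2}$ being precisely the $(\lambda T{+}1)$ term of Proposition~\ref{King}(b), while the exponent results from $m_N'\asymp\eps N^{\alpetu{1}}/T_N$ and $\log(\lambda_1/(\lambda_1{+}C\mu_1\eps/\log N))\asymp-\eps/\log N$. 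Adding $\P(\cal G_N^c)$, which is negligible in front of the right-hand side because $N^{\alpetu{1}}/(T_N\log N)$ is at least a fixed positive power of $N$ divided by $\log N$ when $T_N\leq N^{(\alpetu{1}\wedge1)-\eta}$, and enlarging $D_3$ accordingly, one obtains the lemma.

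The step I expect to be the main obstacle is the coupling in the second paragraph: one must replace the two-dimensional, state-dependent dynamics of $(L^N_1,L^N_2)$ by a one-dimensional birth-and-death process for $L^N_1$, which forces one to (i) control the slow drift of $L^N_2$ so that the moving threshold $(1{+}\eps)N^{\alpetu{1}}/L^N_2(s)$ can be frozen at a fixed level with only $o(1)$ relative error, (ii) verify that the restoring drift of $L^N_1$ is of the exact order $\eps/\log N$ throughout the band $P^N\in\big((1{+}\eps)N^{\alpetu{1}},(1{+}3\eps/2)N^{\alpetu{1}}\big)$, and (iii) check that the jumps of $P^N$, of size $O(\max(L^N_1,L^N_2))=o(\eps N^{\alpetu{1}})$, do not make the process overshoot the target level. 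This is the same circle of estimates underlying Lemmas~\ref{lem: first bounds} and~\ref{lem: first excursion}, so here it requires only changing the thresholds from $(1{+}\eps),(1{+}2\eps)$ to $(1{+}\eps),(1{+}3\eps/2)$.
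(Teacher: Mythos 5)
Your proposal takes essentially the same route as the paper: on the good event where $L^N_0\equiv N$ (up to negligible fluctuations) and $L^N_2(s)/s$ is close to $\kappa=\mu_2(\rho_2{-}\rho_1)$, the moving threshold $(1{+}\eps)N^{\alpetu{1}}/L^N_2(s)$ is frozen up to a $\delta$-error, $L^N_1$ acquires a restoring drift of order $\eps/\log N$ in the band of interest, and the probability of an up-crossing of the $(1{+}3\eps/2)$-level is bounded by a product of (number of excursions $\lesssim \lambda_1 N^{\alpetu{1}/2}$) and (per-excursion escape probability $\lesssim \exp(-C\eps N^{\alpetu{1}}/(T_N\log N))$). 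The paper performs the excursion count explicitly and applies the Lemma~\ref{lem: first excursion} estimate per excursion; you invoke Proposition~\ref{King}(b) directly, which packages exactly that same excursion decomposition. This is a mild stylistic difference, not a different argument.

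One genuine flaw: your final sentence claims you can ``add $\P({\cal G}_N^c)$, which is negligible in front of the right-hand side.'' This is backwards. Since $T_N\leq N^{(\alpetu{1}\wedge 1)-\eta}$, the quantity $N^{\alpetu{1}}/(T_N\log N)\geq N^\eta/\log N$, so the right-hand side of the lemma is superpolynomially small; whereas $\P({\cal G}_N^c)$ is only $o(1)$ (the estimates of Proposition~\ref{prop: phase 2} and Lemma~\ref{lem: uniform estimate} come without a quantitative rate). So $\P({\cal G}_N^c)$ cannot be absorbed into the right-hand side by ``enlarging $D_3$''. The lemma must be read as holding on the event ${\cal G}_N$ (equivalently, conditionally on $A_\delta$ in the paper's notation), and the $o(1)$ contribution from ${\cal G}_N^c$ is accounted for exactly once at the level of Proposition~\ref{prop: averaging}, \emph{after} the union bound over the $\approx N^{\alpetu{1}/2}$ subintervals. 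This is in fact essential: if $\P({\cal G}_N^c)$ entered each per-interval bound, the union bound would produce $N^{\alpetu{1}/2}\P({\cal G}_N^c)$, which need not vanish. The paper's own statement of the lemma has the same surface imprecision, but its proof never pretends to absorb the bad event, and the global-event reading is the one consistent with how the lemma is used.
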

\noindent
These lemmas are used as follows. If 
\[
{\cal E}_N\stackrel{\text{\rm def.}}{=}\left\{L^N_1L^N_2(T_N) \leq (1+3\eps/{2})N^{\alpetu{1}}\right\},
\]
and if one defines $I_N=[T_N,T_N+N^{\alpetu{1}/2}]$ and $J_N=[\tau^N(T_N),\tau^N(T_N){+}N^{\alpetu{1}/2}]$, then
\begin{align*}
  \P &\bigg(\left\{ \sup_{s\in I_N}  L^N_1L^N_2(s){>} (1{+}2\eps)N^{\alpetu{1}}\right\}\bigcup \left\{  L^N_1L^N_2(T_N{+}N^{\alpetu{1}/2}){\geq} \Big(1{+}\frac{3}{2}\,\eps\Big)\right\}\left| {\cal E}_N\bigg) \right. \\
& \leq\P\bigg(\sup_{s\in [T_N,\tau^N(T_N)]}  L^N_1L^N_2(s){>}(1{+}2\eps)N^{\alpetu{1}}\left| {\cal E}_N\bigg)\right.{+}\P\left.\left(\tau^N(T_N){-}T_N{>} \frac{N^{\alpetu{1}/2}}{\log N}\right| {\cal E}_N \right)\\ & \hspace{5cm} {+}\P\bigg(\sup_{s\in J_N}  L^N_1L^N_2(s){>}\left(1{+}\frac{3}{2}\eps\right)N^{\alpetu{1}}\left| {\cal E}_N\bigg)\right..
\end{align*}
The first term in the right hand side of the above relation is controlled by the inequality of Lemma~\ref{lem: first excursion}, the second term by Lemma~\ref{lem: length first excursion} and the third one by Lemma~\ref{lem: rest of excursion} with $T_N$ replaced by $\tau^N(T_N)$. One finally obtains the existence of a constant $C_4>0$ such that
\begin{multline*}
  \P\bigg(\left\{\sup_{s\in I_N}  L^N_1L^N_2(s)> (1+2\eps)N^{\alpetu{1}}\right\}\\ \bigcup \left\{  L^N_1L^N_2(T_N+N^{\alpetu{1}/2})\geq \Big(1+{3}/{2}\,\eps\Big)\right\}\left| {\cal E}_N \bigg)\right.
  \leq \exp\left(-C_4 \eps \frac{N^{\eta}}{\log N}\right)
\end{multline*}
holds for $N$ sufficiently large. Since $[N^{\alpetu{1} +\eta},N^{(\alpetu{1}\wedge 1)-\eta}]$ can be covered by at most $N^{\alpetu{1}/2}$ intervals of length $N^{\alpetu{1}/2}$, it follows that
\begin{multline*}
\P\bigg(\sup_{s\in [N^{\alpetu{1} +\eta},N^{(\alpetu{1}\wedge 1)-\eta}]} L^N_1L^N_2(s)> (1+2\eps)N^{\alpetu{1}} \bigg)\\  \leq \P\bigg(L^N_1L^N_2(N^{\alpetu{1}/2+\eta})\geq \Big(1+\frac{3}{2}\,\eps\Big)N^{\alpetu{1}}\bigg) + N^{\alpetu{1}/2}e^{-C_4 \eps {N^{\eta}}/{\log N}}.
\end{multline*}
There remains to show that the first term in the right hand side of the inequality just above converges to $0$ as $N$ tends to infinity. This corresponds to the following result.
\begin{lemma}\label{lem: initial condition}
The convergence
\[
\lim_{N\rightarrow \infty}\P\bigg(L^N_1L^N_2(N^{\alpetu{1}/2+\eta})\geq \Big(1+\frac{3}{2}\,\eps\Big)N^{\alpetu{1}}\bigg) = 0
\]
holds.
\end{lemma}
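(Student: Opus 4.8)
The plan is, following the same idea as in the proof of Proposition~\ref{prop: averaging} but starting from the only information available at the beginning of the time scale (the crude bounds of Lemma~\ref{lem: first bounds}), to first show that the product $L^N_1L^N_2$ is rapidly pushed below $(1+\eps/2)N^{\alpetu{1}}$, well before time $N^{\alpetu{1}/2+\eta}$, and then to use the trapping provided by Lemmas~\ref{lem: first excursion}, \ref{lem: length first excursion} and~\ref{lem: rest of excursion} to keep it there up to time $N^{\alpetu{1}/2+\eta}$. Fix $\gamma$ with $\alpetu{1}/2+\eta<\gamma<(\alpetu{2}/2)\wedge 1$, which is possible for $\eta$ small since $\alpetu{1}<\alpetu{2}$; by Lemma~\ref{lem: first bounds} and the ergodic theorem for the arrival processes, one works throughout on an event of probability tending to $1$ on which $L^N_1(s)\leq A_\gamma N^{\alpetu{1}/2}$ and $c_-s\leq L^N_2(s)\leq c_+s$ for all $s\in[N^{\alpetu{1}/2},N^\gamma]$, where $A_\gamma,c_-,c_+>0$ are constants.

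The elementary ingredient is a one-sided drift estimate. Whenever $L^N_1(s)L^N_2(s)\geq(1+c)N^{\alpetu{1}}$ for a constant $c>0$, Relation~\eqref{WL}, together with $\log(1+L^N_0(s))=\log N+O(1)$ and $\log(1+L^N_1(s))+\log(1+L^N_2(s))\geq\log\big(L^N_1L^N_2(s)\big)\geq\alpetu{1}\log N+\log(1+c)$, gives $W(L^N(s))\geq\rho_1+c_0\log(1+c)/\log N$ for some constant $c_0=c_0(\alpetu{1})>0$ and all $N$ large. Thus node $1$ is then served at rate $\mu_1W(L^N(s))\geq\lambda_1+\delta_N$ with $\delta_N:=\mu_1c_0\log(1+c)/\log N$, and for as long as the product stays above $(1+c)N^{\alpetu{1}}$ the process $(L^N_1(s))$ is stochastically dominated from above by a birth and death process reflected at $0$ with birth rate $\lambda_1$ and death rate $\lambda_1+\delta_N$, to which Proposition~\ref{King} applies.

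For the contraction, take $c=\eps/2$ and let $\sigma^N$ be the first time $s\geq N^{\alpetu{1}/2}$ at which $L^N_1L^N_2(s)\leq(1+\eps/2)N^{\alpetu{1}}$. On the good event, for $s<\sigma^N$ the process $(L^N_1(s))$ is dominated by the birth and death process above started from $L^N_1(N^{\alpetu{1}/2})\leq A_\gamma N^{\alpetu{1}/2}$; since that process has drift $-\delta_N$ with $\delta_N\sim 1/\log N$, a standard hitting-time estimate and Proposition~\ref{King} show that it drops below $N^{\alpetu{1}/2-\eta/2}$ within a time of order $N^{\alpetu{1}/2}\log N=o(N^{\alpetu{1}/2+\eta/2})$, its fluctuations being negligible compared with the drift-induced decrease. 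Since $L^N_2(s)\leq c_+N^{\alpetu{1}/2+\eta/2}$ for $s\leq N^{\alpetu{1}/2+\eta/2}$, this forces the product below $(1+\eps/2)N^{\alpetu{1}}$, hence $\P\big(\sigma^N\leq N^{\alpetu{1}/2+\eta/2}\big)\to 1$.

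It remains to run the trapping on $[\sigma^N,N^{\alpetu{1}/2+\eta}]$: since $L^N_1L^N_2(\sigma^N)\leq(1+\eps/2)N^{\alpetu{1}}$, one covers this interval by at most $N^{\alpetu{1}/2}$ intervals of length $N^{\alpetu{1}/2}$ and applies Lemmas~\ref{lem: first excursion}, \ref{lem: length first excursion} and~\ref{lem: rest of excursion} with $\eps$ replaced by $\eps/2$, together with the combination argument used just above. Every left endpoint $T_N$ of the cover satisfies $N^{\alpetu{1}/2}\leq T_N\leq N^{\alpetu{1}/2+\eta}$, so $N^{\alpetu{1}}/(T_N\log N)\geq N^{\alpetu{1}/2-\eta}/\log N\to\infty$, each exponential failure term is $o(N^{-\alpetu{1}/2})$, and the union bound over the cover still tends to $0$; hence, with probability tending to $1$, $L^N_1L^N_2(s)\leq(1+\eps)N^{\alpetu{1}}$ for all $s\in[\sigma^N,N^{\alpetu{1}/2+\eta}]$, and in particular $L^N_1L^N_2(N^{\alpetu{1}/2+\eta})\leq(1+\eps)N^{\alpetu{1}}<(1+3\eps/2)N^{\alpetu{1}}$. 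The main obstacle is the contraction step: one has to match the stochastic domination---valid only while the product exceeds $(1+\eps/2)N^{\alpetu{1}}$, and so relying on the two-sided control of $L^N_2$ from Lemma~\ref{lem: first bounds}---with the relaxation estimates for the dominating birth and death process from Proposition~\ref{King}, and to check that the $1/\log N$-small negative drift, acting over a window much longer than $N^{\alpetu{1}/2}\log N$, is nonetheless enough to bring $L^N_1$ down by the factor $N^{\eta}$ needed to offset the growth of $L^N_2$.
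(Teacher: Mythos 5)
Your route is genuinely different from the paper's, which shifts the clock to the late time $N^{s_\eta}=N^{\alpetu{1}/2+\eta}-N^{\alpetu{1}/2+\eta/2}$, uses Proposition~\ref{prop: phase 2} directly to pin $L_1^N(N^{s_\eta})$ in $[N^{\alpetu{1}-s_\eta-\delta},N^{\alpetu{1}-s_\eta+\delta}]$ and $L_2^N(s)$ in $[(\kappa-\delta)N^{s_\eta},(\kappa+\delta)N^{\alpetu{1}/2+\eta}]$, and then runs a single short relaxation step followed by the trapping of Lemma~\ref{lem: rest of excursion}, all inside a window of width $N^{\alpetu{1}/2+\eta/2}$. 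You instead start at $N^{\alpetu{1}/2}$ with only the crude bounds of Lemma~\ref{lem: first bounds}, run a contraction to $N^{\alpetu{1}/2+\eta/2}$, and then the full covering-plus-trapping machinery on the rest of the interval. That is a workable plan, and your one-sided drift estimate directly from the product condition is clean, but as written there is a concrete gap.

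In the contraction step you show the dominating birth-and-death process drops below $N^{\alpetu{1}/2-\eta/2}$ within $O(N^{\alpetu{1}/2}\log N)$ and then assert that, since $L_2^N(s)\leq c_+N^{\alpetu{1}/2+\eta/2}$, the product is forced below $(1+\eps/2)N^{\alpetu{1}}$. It is not: the constant $c_+$ coming from Lemma~\ref{lem: first bounds} is the crude $2\lambda_2$, not a quantity near $1$, so all you obtain is $L_1^NL_2^N\leq c_+N^{\alpetu{1}}$, which need not be below the threshold. The remedy is to let the dominating process descend further, e.g.\ below $N^{\alpetu{1}/2-2\eta}$ (which happens in the same $O(N^{\alpetu{1}/2}\log N)$ window, the drift being the bottleneck and fluctuations $O(N^{(\alpetu{1}+\eta)/4})$ being negligible), and then $c_+N^{\alpetu{1}-3\eta/2}\leq(1+\eps/2)N^{\alpetu{1}}$ for $N$ large. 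A secondary caveat is that Lemmas~\ref{lem: first excursion}--\ref{lem: rest of excursion} are proved on the event $A_\delta$, which is defined over $[N^{\alpetu{1}/2+\eta},N^{(\alpetu{1}\wedge 1)-\eta}]$; applying them with $T_N$ as small as $\sigma^N$, which could be close to $N^{\alpetu{1}/2}$, silently assumes an extension of $A_\delta$ down to $N^{\alpetu{1}/2+\eps_0}$ (legitimate via Proposition~\ref{prop: phase 2}, but it should be stated). The paper's choice of the late starting point $N^{s_\eta}$ sidesteps both problems: the sharp two-sided localization replaces the crude $c_+$, and the whole analysis stays inside the range where $A_\delta$ is already in force.
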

One can then conclude that
\begin{equation}\label{upper bound}
\lim_{N\rightarrow \infty}\P\left(\sup_{s\in [N^{\alpetu{1} +\eta},N^{(\alpetu{1}\wedge 1)-\eta}]} L^N_1L^N_2(s)> (1+2\eps)N^{\alpetu{1}} \right) = 0.
\end{equation}
Similar arguments give an estimation for the lower bound:
\[
\lim_{N\rightarrow \infty}\P\bigg(\inf_{s\in [N^{\alpetu{1} +\eta},N^{(\alpetu{1}\wedge 1)-\eta}]} L^N_1L^N_2(s)< (1-2\eps)N^{\alpetu{1}} \bigg) = 0,
\]
and since this conclusion holds for any $\eps>0$, Proposition~\ref{prop: averaging} is proved.
Combining Propositions~\ref{prop: phase 2} and \ref{prop: averaging}, Theorem~\ref{th: averaging} is proved.

\medskip
\begin{proof}[Proofs of Lemmas~\ref{lem: first excursion} and \ref{lem: rest of excursion}]
Both proofs use the same idea. Let us start by the proof of Lemma~\ref{lem: first excursion}.
Let  $\delta>0$ and define
\[
A_\delta \stackrel{\text{def.}}{=} \left\{\sup_{s\in [N^{\alpetu{1}/2+\eta},N^{(\alpetu{1}\wedge 1)-\eta}]} \bigg|\frac{L^N_2(s)}{s}-\kappa\bigg| \leq \delta \right\},
\]
where $\kappa\stackrel{\text{def.}}{=} \mu_2(\rho_2 - \rho_1)$. By Proposition~\ref{prop: phase 2}, this event has a probability tending to $1$ as $N$ goes to infinity.

Let us work conditionally on the event that $L^N_1L^N_2(T_N)=\lfloor(1+3\eps/{2})N^{\alpetu{1}} \rfloor$.  A simple coupling argument shows that it is enough to consider this case. To ease the notation, one does not report this conditioning in the notation. On the event $A_\delta$, one thus has
\[
L^N_1(T_N) \leq  \ell_N\stackrel{\text{def.}}{=}\frac{(1+3\,\eps/{2})}{\kappa -\delta}\frac{N^{\alpetu{1}}}{T_N}.
\]
Again by a coupling argument, one can assume that $L^N_1(T_N)$ is equal to this upper bound. Note that the relation $L^N_1L^N_2(s){<}(1{+}2\eps)N^{\alpetu{1}}$ holds for any $s\leq \tau^N(T_N)$ if
\begin{equation}\label{target}
\sup_{s\in [T_N,\tau^N(T_N)]}L^N_1(s) <  \frac{(1+2\eps)}{(\kappa + \delta)}\, \frac{N^{\alpetu{1}}}{T_N+N^{\alpetu{1}/2}},
\end{equation}
where the quantity in the denominator is an upper bound on the values taken by $L^N_2$ on $[T_N,T_N+N^{\alpetu{1}/2}]$. Hence, this is what is proved below. Observe that Relation~\eqref{target} is possible for $N$ large enough whenever $\delta$ is chosen small enough so that
\[
\frac{1+{3\eps}/{2}}{\kappa -\delta} < \frac{1+2\eps}{\kappa + \delta}
\]
holds. Now, for $s\in[T_N,\tau^N(T_N)]$ one has
\[
L^N_1(s) \geq \bar{\ell}_N\stackrel{\text{def.}}{=} \frac{(1+\eps)N^{\alpetu{1}}}{(\kappa+\delta)(T_N+N^{\alpetu{1}/2})} \ \ \hbox{and}\ \ L^N_2(s)\geq (\kappa-\delta)T_N,
\]
$(L^N_1(s))$ is therefore  stochastically bounded by $(\ell_N+X_+(s-T_N))$, where $(X_+(s))$ is a  birth and death process reflected at $\overline{\ell}_N{-}\ell_N$,
with birth rate  $\lambda_1$ and a death rate given by
\begin{multline*}
\mu_1 \,\frac{\log\big(\bar{\ell}_N\big) + \log ((\kappa - \delta)T_N)}{\log N + {\log\big(\bar{\ell}_N\big) + \log ((\kappa - \delta)T_N)}} \\= \mu_1\,\frac{\alpetu{1}}{1+\alpetu{1}} + \frac{C_0\log \big({(\kappa-\delta)(1+\eps)}/{(\kappa+\delta)}\big)}{\log N} = \mu_1\,\frac{\alpetu{1}}{1+\alpetu{1}} + \frac{C\eps}{\log N},
\end{multline*}
for some positive constants $C_0$ and $C$. As a consequence, the infinitesimal drift of $X$ is equal to $-C\eps /(\log N)$ and  Proposition~\ref{King} gives  the existence of a constant $C_1>0$ such that
\[
\P\left(\sup_{s\in [0,\tau^N(T_N)-T_N]}X(s) \geq \frac{(1+2\eps)N^{\alpetu{1}}}{(\kappa + \delta)(T_N+N^{\alpetu{1}/2})} -\ell_N\right)  \leq \exp\bigg(-C_1 \eps \frac{N^{\alpetu{1}}}{T_N\log N}\bigg),
\]
which implies~\eqref{target} and proves Lemma~\ref{lem: first excursion}.

The proof of Lemma~\ref{lem: rest of excursion} is similar. Indeed, to obtain the desired upper bound, this time one starts from $L^N_1(T_N){=} {(1{+}\eps)N^{\alpetu{1}}}/{[(\kappa{-}\delta)T_N]}$ and shows that on the time interval $[T_N,T_N+N^{\alpetu{1}/2}]$,  the process  $(L^N_1(t))$ never exceeds the quantity ${(1{+}3\,\eps/{2})N^{\alpetu{1}}}/{[(\kappa{+}\delta)T_N]}$ with a probability that has the required form. The only difference here is that one has to control the number of excursions of $(L^N_1(t))$ above ${(1{+}\eps)N^{\alpetu{1}}}/{[(\kappa{-}\delta)T_N]}$ on the time interval $[T_N,T_N+N^{\alpetu{1}/2}]$. This number is obviously bounded by the number of jumps of size $+1$ performed by $(L^N_1(t))$ during this lapse of time, which itself is stochastically bounded by a Poisson random variable with parameter $\lambda_1 N^{\alpetu{1}/2}$. Thus, for any $C_2>\lambda_1$, there exists $C_3>0$ such that
\[
\P\left(\, \hbox{at least }C_2N^{\alpetu{1}/2}\hbox{ excursions on } [T_N,T_N+N^{\alpetu{1}/2}]\right)\leq e^{-C_3N^{\alpetu{1}/2}}.
\]
Consequently,
\begin{multline*}
\P\bigg(\sup_{s\in [T_N,T_N+N^{\alpetu{1}/2}]}L^N_1L^N_2(s)\geq \Big(1+{3\eps}/{2}\Big)N^{\alpetu{1}}\bigg)  \leq e^{{-}C_3N^{\alpetu{1}/2}}\\{+} C_2N^{\alpetu{1}/2}\exp\bigg({-}C_4\eps\, \frac{N^{\alpetu{1}}}{T_N\log N}\bigg)
 \leq C_5 N^{\alpetu{1}/2}\exp\bigg({-}C_4\eps\, \frac{N^{\alpetu{1}}}{T_N\log N}\bigg)
\end{multline*}
for some positive constants $C_4$ and $C_5$, where the last inequality uses the fact that $N^{\alpetu{1}}/T_N \leq N^{\alpetu{1}/2 -\eta}$. The proof of Lemma~\ref{lem: rest of excursion} is thus complete.
\end{proof}

\begin{proof}[Proof of Lemma~\ref{lem: length first excursion}]
The worst case to consider here is when
\[
L^N_1(T_N)=\tilde{\ell}_N\stackrel{\text{def.}}{=}\frac{(1+{3\,\eps}/{2})N^{\alpetu{1}}}{(\kappa-\delta)T_N}.
\]
Since $L^N_2(s)\leq (\kappa+\delta)(T_N+N^{\alpetu{1}/2})$ on the time interval considered, the probability to estimate is bounded from above by the probability that $(L^N_1(t))$ does not go below 
\[
m_N\stackrel{\text{def.}}{=}\frac{(1+\eps)N^{\alpetu{1}}}{[(\kappa+\delta)(T_N+N^{\alpetu{1}/2})]}
\]
on the time interval $[T_N,T_N+N^{\alpetu{1}/2}/(\log N)]$.
Using the same type of coupling as before, on $[T_N,\tau^N(T_N)]$, $(L^N_1(t))$ is stochastically bounded by $(\tilde{\ell}_N + X_+(t-T_N))$, where $(X_+(t))$ is a birth and death process  reflected at $m_N{-}\tilde{\ell}_N<0$ with birth rate $\lambda_1$ and death rate
\[
\mu_1\ \frac{\log m_N + \log((\kappa-\delta)T_N)}{\log N + \log m_N + \log((\kappa-\delta)T_N)} = \mu_1\, \frac{\alpetu{1}}{1+\alpetu{1}} + \frac{C\eps}{\log N}
= \lambda_1 + \frac{C\eps}{\log N},
\]
where $C>0$. One denotes by $(X(s))$ the non-reflected birth and death process with the same initial point. In particular, $(X(s))$ is a random walk whose drift is equal to $-C\eps /(\log N)$. Consequently,
\begin{align*}
&  \P\left(\tau^N(T_N)-T_N>\rule{0mm}{5mm}   \frac{N^{\alpetu{1}/2}}{\log N}\right) \leq \P\left(\inf_{s\in [0,N^{\alpetu{1}/2}/\log N]} X_+(s)> m_N - \tilde{\ell}_N\right)\\
& \leq \P\left( X\left(\frac{N^{\alpetu{1}/2}}{\log N}\right)> m_N - \tilde{\ell}_N\right) \\& = \P\left(X\left(\frac{N^{\alpetu{1}/2}}{\log N}\right){+} C\eps\frac{N^{\alpetu{1}/2}}{(\log N)^2} {>} m_N {-} \tilde{\ell}_N{+}C\eps\frac{ N^{\alpetu{1}/2}}{(\log N)^2}\right) 
 \leq \exp\left(-C_2\eps \frac{N^{\alpetu{1}/2}}{(\log N)^2}\right)
\end{align*}
for some $C_2>0$, where the last line uses standard large deviations principles applied to the centered random walk
\[
\left(X(t){+} \frac{C\eps }{(\log N)}t\right)
\]
and the fact that
\[
\frac{N^{\alpetu{1}}}{T_N}=o\left(\frac{N^{\alpetu{1}/2}}{(\log N)^2}\right) \quad \text{ \rm implies }\quad  \left|m_N{-}\tilde{\ell}_N\right|=o\left(\frac{N^{\alpetu{1}/2}}{(\log N)^2}\right).
\]
\end{proof}

\begin{proof}[Proof of Lemma~\ref{lem: initial condition}]
The proof is a combination of the arguments used in the proofs of Lemmas~\ref{lem: first excursion}, \ref{lem: length first excursion} and \ref{lem: rest of excursion}.
Indeed, let $s_\eta$ be defined by
$$
N^{s_\eta} = N^{\alpetu{1}/2+\eta}{-}N^{\alpetu{1}/2+\eta/2},\quad \hbox{i.e.}\quad  s_\eta = \frac{\alpetu{1}}{2}+\eta + \frac{\log(1-N^{-\eta/2})}{\log N}.
$$
Since $s_\eta>\alpetu{1}/2$, by Theorem~\ref{th: averaging}, for a given small $\delta>0$, the event 
\begin{multline*}
\left\{L^N_1(N^{s_\eta}) \in \left[N^{\alpetu{1}-s_\eta-\delta},N^{\alpetu{1}-s_\eta+\delta}\right] = \left[\frac{N^{{\alpetu{1}}/{2}-\eta - \delta}}{1-N^{-\eta/2}}, \frac{N^{{\alpetu{1}}/{2}-\eta + \delta}}{1-N^{-\eta/2}}\right]\right\}\\
\bigcup\left\{L^N_2(s)\in \left[(\kappa-\delta)N^{s_\eta},(\kappa+\delta)N^{\alpetu{1}/2+\eta}\right], \,\forall s\in [N^{s_\eta},N^{\alpetu{1}/2+\eta}]\right\}
\end{multline*}
has a probability converging to $1$ as $N$ becomes large. Recall that $\kappa= \mu_2(\rho_2 - \rho_1)$. As before, via a coupling, one can assume  that $L^N_1(N^{s_\eta})$ is equal to the maximal value $N^{\alpetu{1}-s_\eta+\delta}$. For $\eps>0$, define
\[
\ell_N\stackrel{\text{def.}}{=} \frac{1+\eps}{\kappa +\delta}\, N^{\alpetu{1}/2-\eta} \text{ and }\sigma_N\stackrel{\text{def.}}{=} \inf\left\{s\geq N^{s_\eta}:\, L^N_1(s)\leq \ell_N\right\}.
\]
 One first shows that $\sigma_N{<}N^{\alpetu{1}/2+\eta}$ holds with probability tending to $1$ as $N$ becomes large. On the time interval $[N^{s_\eta}, \sigma_N]$, the process $(L^N_1(s))$ is stochastically bounded by
\[
\left(\frac{N^{{\alpetu{1}}/{2}-\eta + \delta}}{1-N^{-\eta/2}}+ X(s{-}N^{s_\eta})\right),
\]
where $(X(s))$ is a birth and death process on $\Z$ starting at $0$ with birth rate $\lambda_1$ and a death rate given by
\begin{multline*}
  \mu_1\,\frac{\log\big(\ell_N\big) + \log ((\kappa-\delta)N^{\alpetu{1}/2+\eta}(1-N^{-\eta/2}))}{\log N +\log\big(\ell_N\big) + \log ((\kappa-\delta)N^{\alpetu{1}/2+\eta}(1-N^{-\eta/2}))}\\
  = \mu_1\, \frac{\alpetu{1}}{1+\alpetu{1}}+ \frac{C\eps}{\log N}=\lambda_1+ \frac{C\eps}{\log N},
\end{multline*}
for some constant $C>0$. Hence, as in the proof of Lemma~\ref{lem: length first excursion}, one has
\begin{multline*}
\P\left(\sigma_N> N^{\alpetu{1}/2+\eta}\right)  \leq \P\left(\inf_{s\in [0,N^{\alpetu{1}/2+\eta/2}]}X(s)> \frac{1+\eps}{\kappa +\delta}\, N^{\alpetu{1}/2-\eta} - \frac{N^{\frac{\alpetu{1}}{2}-\eta + \delta}}{1-N^{-\eta/2}} \right) \\
 \leq \P\left( X(N^{\alpetu{1}/2+\eta/2}) + \frac{C\eps N^{\alpetu{1}/2+\eta/2}}{\log N} > -C_1 N^{\alpetu{1}/2-\eta+\delta} + \frac{C\eps N^{\alpetu{1}/2+\eta/2}}{\log N} \right).
\end{multline*}
This last term converges to $0$ as $N$ tends to infinity whenever $\delta<3\eta/2$, since then $N^{\alpetu{1}/2-\eta+\delta}$ is negligible compared to $N^{\alpetu{1}/2+\eta/2}/\log N$. As before, one uses standard large deviation estimates on centered random walks.

Secondly, one can see that conditionally on the event $\{\sigma_N{<}N^{\alpetu{1}/2+\eta}\}$, the process $(L^N_1(s))$ stays below the value ${(1{+}{3\eps}/{2})}N^{\alpetu{1}/2 -\eta}/{(\kappa{+}\delta})$ on the time interval $[\sigma_N,N^{\alpetu{1}/2+\eta}]$ with a probability tending to $1$. It is  proved using exactly the same method as in the proof of Lemma~\ref{lem: rest of excursion}.

The quantity $\delta>0$ is chosen sufficiently small that  $(1{+}\eps)/(\kappa{+}\delta){>}1/\kappa$. One just has to prove that
\[
\lim_{N\to+\infty} \P\left(\sigma_N<N^{\alpetu{1}/2+\eta}, \sup_{T_N\leq s\leq N^{\alpetu{1}/2+\eta}} L^N_1(s)\leq \frac{1+{3\,\eps}/{2}}{\kappa+\delta}N^{\alpetu{1}/2 -\eta}\right)=1,
\]
hence, with probability tending to $1$,
\[
L^N_1L^N_2(N^{\alpetu{1}/2+\eta}) \leq \frac{1+{3\,\eps}/{2}}{\kappa+\delta}\, N^{\alpetu{1}/2 -\eta}\times (\kappa+\delta)N^{\alpetu{1}/2+\eta} = \Big(1+\frac{3}{2}\, \eps\Big)N^{\alpetu{1}}.
\]
Lemma~\ref{lem: initial condition} is proved.
\end{proof}

\subsection{Proof of Proposition~\ref{prop: phase 3}}\label{ss:proof 2}
Again, one starts by establishing some crude bounds on the number of pending requests in nodes $1$ and $2$ over the time interval of interest. Recall the notation $\kappa = \mu_2(\rho_2-\rho_1)$.
\begin{lemma}\label{lem: L1 vanishes}
For $\eps>0$ sufficiently small there exists $C_\eps>0$ such that
\[
\lim_{N\rightarrow \infty}\P\left(\sup_{s\in I_N}L_1(s)<N^\eps, \inf_{s\in I_N} \frac{L_2(s)}{s}\geq C_\eps\right) = 1,
\]
with $I_N=[N^{\alpetu{1}-\eps/2},N^{(\alpetu{2}\wedge 1)-2\eps}]$. 
\end{lemma}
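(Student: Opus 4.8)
The plan is to propagate the two bounds $L^N_1(s)<N^\eps$ and $L^N_2(s)\geq C_\eps s$ \emph{jointly} forward in time from the left end of $I_N$, exploiting that they reinforce one another: as long as $L^N_2$ is large the total service rate $\mu_1 W(L^N)$ of node~$1$ strictly exceeds $\lambda_1$, so $L^N_1$ is pushed back down, whereas as long as $L^N_1$ is small the total service rate $\mu_2 W(L^N)$ of node~$2$ stays strictly below $\lambda_2$, so $L^N_2$ keeps growing at least linearly. As in the preceding proofs one first reduces to the case $L^N_0\equiv N$ by a coupling of $L^N_0$ with its arrival and departure processes (see the discussion at the beginning of Section~\ref{ss:proof 1}): every $s\in I_N$ satisfies $s\leq N^{(\alpetu{2}\wedge1)-2\eps}=o(N)$, so $N/2\leq L^N_0(s)\leq 2N$ on $I_N$ with probability tending to~$1$, hence $\log(1{+}L^N_0(s))=\log N+\mathcal{O}(1)$ uniformly on $I_N$. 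One works on the high-probability event $\Omega_N$ on which, moreover, $L^N_j(s)\leq 2\lambda_j s$ for all $s\in I_N$, $j\in\{1,2\}$, and ${\cal N}_{\lambda_1}([0,N])\leq 2\lambda_1 N$; the ergodic theorem for Poisson processes gives $\P(\Omega_N)\to1$.

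Set $T_N\stackrel{\text{def.}}{=}N^{\alpetu{1}-\eps/2}$. Recall that $\alpetu{1}<1$ here, so for $\eps<\alpetu{1}$ one has $\alpetu{1}-\eps/2\in(\alpetu{1}/2,\alpetu{1}\wedge1)$ and Theorem~\ref{th: averaging} applies at this point, yielding with probability tending to~$1$ that $L^N_1(T_N)\leq (2/\kappa)N^{\eps/2}<N^{3\eps/4}$ and $L^N_2(T_N)\geq (\kappa/2)N^{\alpetu{1}-\eps/2}$, with $\kappa=\mu_2(\rho_2{-}\rho_1)$. Since $x\mapsto x/(1{+}x)$ is increasing and $\rho_2=\alpetu{2}/(1{+}\alpetu{2})$, there is $\delta_0>0$ with $(\alpetu{2}{\wedge}1-\eps)/(1{+}\alpetu{2}{\wedge}1-\eps)\leq \rho_2-\delta_0$; put $d\stackrel{\text{def.}}{=}\mu_2\delta_0$ and $C_\eps\stackrel{\text{def.}}{=}\tfrac14\min(\kappa/2,d)$, and define the stopping time
\[
\tau_N\stackrel{\text{def.}}{=}\inf\big\{s\geq T_N:\ L^N_1(s)\geq N^\eps\ \text{ or }\ L^N_2(s)<C_\eps s\big\}\wedge N^{(\alpetu{2}\wedge1)-2\eps}.
\]
On $\Omega_N$ one has $\tau_N>T_N$, and the lemma follows once one shows $\P(\tau_N<N^{(\alpetu{2}\wedge1)-2\eps},\Omega_N)\to0$, since on the complementary event both bounds of the lemma hold on all of $I_N$ (the domination estimates below being valid up to and including $\tau_N$).

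On $[T_N,\tau_N)$ one has $L^N_1(s)<N^\eps$ and $C_\eps s\leq L^N_2(s)\leq 2\lambda_2 s$. Inserting this together with $\log(1{+}L^N_0(s))=\log N+\mathcal{O}(1)$ into~\eqref{WL} and using $s\leq N^{(\alpetu{2}\wedge1)-2\eps}$ gives $W(L^N(s))\leq \rho_2-\delta_0+\mathcal{O}(1/\log N)$, so the drift of $L^N_2$ on $[T_N,\tau_N)$ is at least $\lambda_2-\mu_2W(L^N(s))\geq d/2$ for $N$ large; and whenever in addition $L^N_1(s)\geq N^{3\eps/4}$ one gets $\log(1{+}L^N_1(s))+\log(1{+}L^N_2(s))\geq(\alpetu{1}+\eps/4)\log N+\mathcal{O}(1)$ (using $L^N_2(s)\geq C_\eps N^{\alpetu{1}-\eps/2}$), whence $W(L^N(s))\geq\rho_1+c\eps+\mathcal{O}(1/\log N)$ for some constant $c>0$, so there the drift of $L^N_1$ is at most $-d_1<0$ for some constant $d_1>0$ and $N$ large. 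Consequently, arguing as in the proof of Lemma~\ref{lem: first bounds}, on $[T_N,\tau_N]$ the process $(L^N_1(s))$ is stochastically dominated by $(N^{3\eps/4}+X_+(s{-}T_N))$ with $(X_+)$ a birth and death process reflected at~$0$, birth rate $\lambda_1$, death rate $\lambda_1+d_1$, and Proposition~\ref{King}(b) with $T=N$ gives
\[
\P\Big(\sup_{s\in[T_N,\tau_N]}L^N_1(s)\geq N^\eps,\ \Omega_N\Big)\leq (\lambda_1 N+1)\Big(\frac{\lambda_1}{\lambda_1+d_1}\Big)^{N^\eps-N^{3\eps/4}}\to0.
\]
Similarly, on $[T_N,\tau_N]$ the process $(L^N_2(s))$ is stochastically bounded from below by a birth and death process $(\widehat L_2(s))$ with birth rate $\lambda_2$, death rate $\lambda_2-d/2$ and $\widehat L_2(T_N)=L^N_2(T_N)$; writing $\widehat L_2(s)=\widehat L_2(T_N)+\tfrac d2(s{-}T_N)+\widehat M(s)$ with $\widehat M$ a martingale, $\croc{\widehat M}_s\leq 2\lambda_2 s$, and using $\widehat L_2(T_N)+\tfrac d2(s{-}T_N)\geq 2C_\eps s$ (valid because $\kappa/2\geq 2C_\eps$ and $d/2\geq 2C_\eps$), the event $\{\widehat L_2(s)<C_\eps s\}$ forces $\{\widehat M(s)<-C_\eps s\}$; cutting $[T_N,N^{(\alpetu{2}\wedge1)-2\eps}]$ into the dyadic blocks $[2^kT_N,2^{k+1}T_N]$ and applying Doob's maximal inequality on each block,
\[
\P\Big(\exists\, s\in[T_N,N^{(\alpetu{2}\wedge1)-2\eps}]:\ \widehat L_2(s)<C_\eps s\Big)\leq \sum_{k\geq0}\frac{2\lambda_2\,2^{k+1}T_N}{(C_\eps2^kT_N)^2}=\frac{8\lambda_2}{C_\eps^2\,T_N}\to0.
\]
Since $\{\tau_N<N^{(\alpetu{2}\wedge1)-2\eps},\Omega_N\}$ is contained in the union of the two events estimated above, this gives $\P(\tau_N<N^{(\alpetu{2}\wedge1)-2\eps},\Omega_N)\to0$ and proves the lemma.

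The main obstacle is the interdependence of the two estimates — the negative drift of $L^N_1$ is only available once $L^N_2$ is known to be large, and the positive drift of $L^N_2$ only once $L^N_1$ is known to be small — which is why they must be carried forward simultaneously through a single stopping time rather than handled separately. The second subtlety is producing the lower bound $L^N_2(s)\geq C_\eps s$ \emph{uniformly} over $I_N$, whose right endpoint is an enormous power of its left endpoint, \emph{without} the constant $C_\eps$ deteriorating: this is what the geometric (dyadic) splitting achieves, the failure probability on the $k$-th block being $\mathcal{O}(2^{-k})/T_N$ because $\croc{\widehat M}$ grows at the same geometric rate as the square of the threshold $C_\eps2^kT_N$.
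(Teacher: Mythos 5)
Your proof is correct, and it does the job by a somewhat different route than the paper. The paper breaks the argument into two \emph{sequential} steps around the single stopping time $\sigma_N=\inf\{s\geq N^{\alpetu{1}-\eps/2}:L^N_1(s)\geq N^\eps\}$: it first shows that on $[N^{\alpetu{1}-\eps/2},\sigma_N]$ the process $L^N_2$ dominates an explicit (time-inhomogeneous) birth--death process with uniformly positive drift and appeals to the ergodic theorem for Poisson processes to get $L^N_2(s)/s\geq C_\eps$, and only \emph{then} uses that lower bound on $L^N_2$ together with Proposition~\ref{King} to show $\sigma_N>N^{(\alpetu{2}\wedge1)-2\eps}$. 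The key point the paper exploits -- and which slightly simplifies the logic -- is that the positive drift of $L^N_2$ requires only $L^N_1<N^\eps$ and the crude Poisson upper bound $L^N_2\leq 2\lambda_2 s$, \emph{not} a lower bound on $L^N_2$ itself (since $W$ is increasing in $\log(1+L_2)$, a smaller $L_2$ makes the drift even more positive). So the two estimates are not as circularly linked as your closing remark suggests: the paper orders them so that the first can be established without the second. That said, your joint stopping time $\tau_N$ encoding both failure events is equally legitimate and self-contained, and your use of Doob's maximal inequality on dyadic blocks $[2^kT_N,2^{k+1}T_N]$ is a nice, explicit substitute for the paper's (somewhat terse) appeal to the ergodic theorem: it makes the uniformity of the bound $L^N_2(s)/s\geq C_\eps$ over an interval whose right endpoint is a much larger power of $N$ than the left completely transparent, since the quadratic variation of the martingale part grows at the same geometric rate as the square of the threshold. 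The one device you use to sidestep the paper's shrinking of the starting point from $N^{\alpetu{1}-\eps/2}$ to $N^{\alpetu{1}-\eps/4}$ -- namely raising the reflecting level for $L^N_1$ from $N^{\eps/2}$ to $N^{3\eps/4}$ -- is also fine; it achieves exactly the same separation needed to make the death rate of $L^N_1$ strictly exceed $\lambda_1$ above the reflecting threshold.

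A couple of cosmetic remarks that do not affect correctness: the condition ${\cal N}_{\lambda_1}([0,N])\leq 2\lambda_1N$ in your $\Omega_N$ appears not to be used, and the death rate of $\widehat L_2$ should really be taken as something like $\lambda_2-d/4$ (rather than $\lambda_2-d/2$) so that it genuinely dominates the $\mathcal{O}(1/\log N)$-corrected death rate of $L^N_2$; this changes only the innocuous constant in $C_\eps$.
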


\begin{proof}[Proof of Lemma~\ref{lem: L1 vanishes}]
Let us define
$$
\sigma_N := \inf\big\{s\geq N^{\alpetu{1} -\eps/2}:\, L^N_1(s)\geq N^\eps\big\}.
$$
One knows from Theorem~\ref{th: averaging} that $\sigma_N>N^{\alpetu{1}-\eta}$ for any $\eta>0$. Besides, a simple coupling argument shows that with probability tending to $1$, $L^N_2(s)\leq 2\lambda_2 s$ for every $s\in[0,N]$. Hence, on the time interval $[N^{\alpetu{1}-\eps/2},\sigma_N]$, $(L_2^N(t))$ is stochastically bounded from below by the birth and death process $(\tilde{L}_2^N(t))$ such that
\[
\tilde{L}_2^N\left(N^{\alpetu{1}-\eps/2}\right) = L^N_2\left(N^{\alpetu{1}-\eps/2}\right)\sim \kappa N^{\alpetu{1}-\eps/2},
\]
and for which transitions $x\mapsto x+1$ occur at rate $\lambda_2$ and $x\mapsto x-1$ at rate
\[
\mu_2\, \frac{\eps {+} [\log(s){+}\log(2\lambda_2)]/\log N}{1{+} \eps {+} [\log(s){+}\log(2\lambda_2)]/\log N} {=} \mu_2\, \frac{\eps {+} {\log s}/{\log N}}{1{+} \eps {+} {\log s}/{\log N}} {+} \frac{C}{\log N}.
\]
But since
\[
\lambda_2> \mu_2\frac{\eps {+} {\log s}/{\log N}}{1{+} \eps {+} {\log s}/{\log N}} 
\]
is equivalent to
\[
\frac{\log s}{\log N} < \frac{\rho_2}{1{-}\rho_2}{-}\eps = \alpetu{2}{-}\eps,
\]
the infinitesimal drift of $(\tilde{L}_2^N(t))$ is bounded from below by some $c_\eps>0$ on the interval $[N^{\alpetu{1}-\eps/2},\sigma_N\wedge N^{(\alpetu{2}\wedge 1)-2\eps}]$. The ergodic theorem for Poisson processes thus guarantees that $\tilde{L}_2^N(s)/s$ remains greater than $C_\eps{=}\kappa{+}c_\eps/2$ with probability tending to $1$ as $N\rightarrow \infty$, and so
\begin{equation}\label{partial result}
\lim_{N\rightarrow \infty}\P\left(\inf_{s\in [N^{\alpetu{1}-\eps/2},\sigma_N\wedge N^{(\alpetu{2}\wedge 1)-2\eps}]} \frac{L_2^N(s)}{s}\geq C_\eps\right) = 1.
\end{equation}

Now, using this first result together with Theorem~\ref{th: averaging}, for $N$ large enough  one can write that on the smaller time  interval $[N^{\alpetu{1}-\eps/4},\sigma_N\wedge N^{(\alpetu{2}\wedge 1)-2\eps}]$, the process $(L_1^N(t))$ is stochastically bounded from above by $N^{\eps/2} + X_+(\cdot \, - N^{\alpetu{1}-\eps/4})$, where $(X_+(t))$ is a birth and death process reflected at $0$, with birth rate $\lambda_1$ and a death rate equal to
\begin{multline*}
\mu_1\, \frac{{\eps}/{2}+\alpetu{1} -{\eps}/{4}+{\log C_\eps}/{\log N}}{1+ {\eps}/{2}+\alpetu{1} -{\eps}/{4}+{\log C_\eps}/{\log N}} \\= \mu_1\, \frac{\alpetu{1}}{1+\alpetu{1}} + C'\eps + \frac{C''}{\log N} = \lambda_1 + C'\eps + \frac{C''}{\log N},
\end{multline*}
for some constants $C'$ and $C''>0$. Hence, b) of Proposition~\ref{King} enables us to conclude that $\sigma_N>N^{(\alpetu{2}\wedge 1) -2\eps}$ holds with probability tending to $1$. Recalling Relation (\ref{partial result}) and the fact that  $\sigma_N>N^{(\alpetu{2}\wedge 1) -2\eps}$ is equivalent to
$$
\sup_{s\in [N^{\alpetu{1}-\eps/2},N^{(\alpetu{2}\wedge 1)-2\eps}]}L_1^N(s)<N^\eps,
$$
Lemma~\ref{lem: L1 vanishes} is proved. \end{proof}

One can now complete the proof of Proposition~\ref{prop: phase 3}.

\begin{proof}[Proof of a) of Proposition~\ref{prop: phase 3}]
  Let $\e>0$. From Lemma~\ref{lem: L1 vanishes}, one knows that with probability tending to 1, $L_1^N(s)$ remains below $N^\e$ and $L_2^N(s)/s$ remains above $C_\e$ on the time interval $[N^{\alpetu{1}-\e/4},N^{(\alpetu{2}\wedge 1)-2\e}]$.

  Hence, on the sub-interval $[N^{\alpetu{1}}\log N,N^{(\alpetu{2}\wedge 1)-2\e}]$, $(L_1^N(t))$ is stochastically bounded from above by $N^\e + X_+(\cdot\, - N^{\alpetu{1}}\log N)$, where $X_+$ is a birth and death process starting at 0, reflected at $-N^\e$, with birth rate $\lambda_1$ and death rate
$$
\mu_1\, \frac{\alpetu{1} + {\log(C_\e\log N)}/{\log N}}{1+ \alpetu{1}+ {\log(C_\e\log N)}/{\log N}}  = \lambda_1 + C\, \frac{\log \log N}{\log N}
$$
for some $C>0$. Standard estimates on random walks thus yield
$$
\lim_{N\rightarrow \infty}\P\big((X_+(t))\hbox{ does not hit }(-N^{\e}) \hbox{ before }N^\e\log N\big) = 0,
$$
from which the result follows. \end{proof}

\begin{proof}[Proof of b) of Proposition~\ref{prop: phase 3}]
The same coupling as in the proof of a) of  Proposition~\ref{prop: phase 3} still holds on the interval $[\theta_0^N,N^{(\alpetu{2}\wedge 1)-2\e}]$ (replacing the initial value $N^\e$ by $0$ and reflecting $X_+$ at $0$ instead of $-N^\e$). In particular, by Proposition~\ref{King}b)
$$
\lim_{N\rightarrow \infty}\P\left((X_+(t)) \hbox{ reaches }\frac{(\log N)^2}{\sqrt{\log\log N}} \hbox{ before time }N^{\alpetu{1}+\e}\right)=0.
$$

Next, since $(L_2^N(s))$ increases linearly on the time interval $[N^{\alpetu{1}+\e},N^{(\alpetu{2}\wedge 1)-2\e}]$, another coupling in which $(X_+(t))$ has infinitesimal drift $-C\e$ (due to the fact that $s\geq N^{\alpetu{1}+\e}$) and the initial value is $(\log N)^2/\sqrt{\log\log N}$) shows that
$$
\lim_{N\rightarrow \infty}\P\left(\sup_{s\in[N^{\alpetu{1}+\e},N^{(\alpetu{2}\wedge 1)-2\e}]} L_1^N(s)  > (\log N)^2\right) = 0,
$$
by Proposition~\ref{King}. These two facts combined give the result. \end{proof}

\begin{proof}[Proof of c) of  Proposition~\ref{prop: phase 3}]
Fix $\e>0$ small. Since $\theta_0^N< N^{\alpetu{1}+\e}$ with probability tending to $1$ by a) of Proposition~\ref{prop: phase 3}, by b) of Proposition~\ref{prop: phase 3} one has that $(L_1^N(t))$ is bounded by $(\log N)^2$ on the time interval  $[N^{\alpetu{1}+\e},N^{(\alpetu{2}\wedge 1)-\e}]$. A proof similar to that of Proposition~2 in \cite{RV} then gives the result.
\end{proof}

\subsection{Proof of Theorem~\ref{th: fluid limit}}\label{ss:proof 3}
Fix $\eps>0$ small. Since $(L_0^N(t))$ is stochastically bounded from above by a Poisson process with rate $\lambda_0$, and from below by $N$ minus a Poisson process with rate $\mu_0$, if $\eta\leq \min\{\eps/(2\lambda_0), \eps/(2\mu_0)\}$ one has
\begin{equation}\label{encadrement}
\lim_{N\rightarrow \infty}\P\left(\sup_{s\in [0,\eta N]}\bigg|\frac{L_0^N(s)}{N}-1\bigg|\leq \eps\right) = 1.
\end{equation}

Suppose the conditions of case~\ref{th2a}) Theorem~\ref{th: fluid limit} are satisfied. It is easy to see that Proposition~\ref{prop: phase 1} holds on the interval $(0,1+(\log \eta)/\log N]$. Hence,
\begin{multline*}
\lim_{N\rightarrow \infty} \P\bigg(\bigg|\frac{L_0^N(\eta N)}{N}{-}1\bigg|\leq \eps, \bigg|\frac{L_1^N(\eta N)}{N}{-}\mu_1\bigg(\rho_1{-}\frac{2}{3}\bigg)\eta \bigg|\leq \eps, \\
 \bigg|\frac{L_2^N(\eta N)}{N}{-}\mu_2\bigg(\rho_2{-}\frac{2}{3}\bigg)\eta \bigg|\leq\eps \bigg) {=} 1.
\end{multline*}
From $\eta N$ on, the processes $(L_0^N(t))$, $(L_1^N(t))$ and $(L_2^N(t))$ are all of the order of $N$. Recalling the definition~\eqref{WL} of the quantity $W(L)$, one can thus conclude that the processes of the number of requests $(L_1^N(t))$ and ($L_2^N(t))$  receive a fraction $2/3$ of the capacity of the channels, while the number of requests in the central node $(L_0^N(t))$ receive $1/3$ of the capacity. By coupling $(L_0^N(\eta N+Nt),L_1^N(\eta N + Nt),L_2^N(\eta N + Nt))$ with the solutions to the system (\ref{SDE1}) starting from the extremal values
\[
(1 \pm \eps, \mu_1(\rho_1-2/3)\eta \pm \eps, \mu_2(\rho_2-2/3)\eta\pm \eps),
\]
one obtains that for any $T\in [\eta,t_0-\eta)$,
\begin{multline*}
\lim_{N\to+\infty} \P\bigg(\sup_{t\in [\eta,T]}\bigg|\frac{L_0^N(Nt)}{N}{-}1{-}\mu_0\bigg(\rho_0{-}\frac{1}{3}\bigg)t\bigg|\leq 2\eps,\\ \sup_{t\in [\eta,T]}\bigg|\frac{L_1^N(Nt)}{N}{-}\mu_1\bigg(\rho_1{-}\frac{2}{3}\bigg)t \bigg|\leq 2\eps, \\
\sup_{t\in [\eta,T]}\bigg|\frac{L_2^N(Nt)}{N}{-}\mu_2\bigg(\rho_2{-}\frac{2}{3}\bigg)t \bigg|\leq 2\eps \bigg)=1.
\end{multline*}
This result shows in particular that $L_0^N(Nt)$ becomes negligible compared to $N$ when $t$ approaches $t_0$, hence the bound on the interval of time considered. Since $\eta$ can be chosen as small as one wants, this proves the desired uniform convergence on $(0,t_0)$.

Suppose now that the conditions of case~\ref{th2b}) of Theorem~\ref{th: fluid limit} are satisfied. Using Relation~\eqref{encadrement}, Theorem~\ref{th: averaging} can be extended to the time interval $[\alpetu{1}/2,1+(\log \eta)/\log N]$. Consequently, with probability tending to $1$, $L_0^N(\eta N)$ and $L_2^N(\eta N)$ are both of the order of  $N$ while $L_1^N(\eta N)$ is of the order of $N^{\alpetu{1}-1}$. Then a close look at the proof of Proposition~\ref{prop: averaging} reveals that $L_1^NL_2^N/N^{\alpetu{1}}$ converges to $1$ as long as $L_2^N$ is of the order of $N$. Consequently, one obtains that, on the time interval of interest, node~$0$ receives a fraction $1/(1+\alpetu{1})=1-\rho_1$ of the capacity of the channel, and nodes~1 and 2 receive a fraction $\alpetu{1}/(1+\alpetu{1})=\rho_1$. Using the coupling with the system (\ref{SDE1}) starting at the extremal values mentioned in the previous paragraph, one can then conclude.

Assuming that the conditions of case~\ref{th2c}) of Theorem~\ref{th: fluid limit} are satisfied, Proposition~\ref{prop: phase 3} can be extended to the time interval $(\alpetu{1},1+(\log \eta)/\log N]$, showing that this time $L_0^N(\eta N)$ and $L_2^N(\eta N)$ are of order $N$ while $L_1^N(\eta N)\leq (\log N)^2$ is negligible compared to any power of $N$. Thus, as long as $(L_0^N(t))$ and $(L_2^N(t))$ remain of order $N$, the same proof as that of b) of  Proposition~\ref{prop: phase 3} guarantees that with probability tending to $1$, $(L_1^N(t))$ remains below $(\log N)^2$. In particular, by the definition~\eqref{WL} of $W(L)$, this means that each of the nodes $0$ and $2$ receives a fraction $1/2$ of the capacity of the channels. The conclusion follows from the same arguments as above (see the proofs of Theorem~4 and Proposition~8 in \cite{RV}) for more details).

Finally, the same reasoning together with Theorem~3 in \cite{RV} prove the result of case~\ref{th2d}) of Theorem~\ref{th: fluid limit}.

\section{General case}\label{SecGen}
In this section, one extends the results of Section~\ref{secsec} to the case $J\geq 3$. As before, each queue $i$, independently of the others, receives new jobs at rate $\lambda_i$ and has an exponential service time with parameter $\mu_i$. Recall that when the Markov process is in state  $L=(L_j)$, for every $i\in \{1,\ldots,J\}$, queue $i$ is served at rate
\[
W(L)={\sum_{j=1}^{J}\log (1{+}L_j)}\left/{\sum_{j=0}^J \log (1{+}L_j)}\right.
\]
while  queue $0$ is served at rate
\[
1{-}W(L)={\log (1{+}L_0)}\left/{\sum_{j=0}^J \log (1{+}L_j)}\right..
\]
The initial state is $(L_0^N(0),\ldots,L_J^N(0))=(N,0, \ldots, 0)$ and  $\alpetu{i}$ denotes ${\rho_i}/{(1-\rho_i)}$, where $\rho_i=\lambda_i/\mu_i$. The nodes with indices greater than or equal to $1$ are numbered so that $\rho_1{<}\rho_2{<}\cdots{<}\rho_J$.

Theorem~\ref{th: fluid general2} at the end of this section summarizes the results obtained on the fluid time scale. Because most of its proof consists in using or slightly adapting the arguments presented in Section~\ref{secsec}, below one only details the reasoning for the particularly interesting case when only queues~0 and $J$ are non trivial in the fluid regime. One will first analyze the network on the time interval $[0,N^{\alpetu{1}/(J-1)}]$, and then on $[N^{\alpetu{1}/(J-1)},+\infty)$. Concerning the first interval, the results are analogous to those obtained in Section~\ref{secsec} and their proofs are very similar (if not identical). For this reason, only the non-obvious modifications will be given. Concerning the second time interval, assuming that $\alpetu{1}/(J{-}1){<}1$, one will show that after time $N^{\alpetu{1}/(J-1)}$, the process $(L_1^N(t))$ remains negligible compared to the sizes of the other queues and therefore does not contribute to $W(L^N)$.  As a consequence,  the impact of queue~1 on the other queues can be ignored,  one is left with a system of $J$ queues, and a simple recurrence then concludes the study.

The results concerning the first phase on the time interval $[0,N^{\alpetu{1}/(J-1)}]$ are the following. Their proofs are sketched towards the end of this section.
\begin{proposition}\label{prop: general}
The convergence in distribution
\[
\lim_{N\rightarrow  \infty} \bigg(\frac{L_1^N(N^t)}{N^t},\, \ldots\, , \frac{L_J^N(N^t)}{N^t}\bigg) = \bigg(\lambda_1{-} \mu_1\, \frac{Jt}{1{+} Jt}, \, \ldots\, , \lambda_J {-} \mu_J\, \frac{Jt}{1{+} Jt}\bigg)
\]
holds on the time interval $\big(0,\alpetu{1}/J\wedge 1\big)$.
\end{proposition}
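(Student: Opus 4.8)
The plan is to run the bootstrap argument used for Proposition~2 of~\cite{RV} (the case $J{=}1$) and for Proposition~\ref{prop: phase 1} above (the case $J{=}2$), now with $J$ queues interacting simultaneously. First I would reduce to $L_0^N\equiv N$ on the time scale $(N^t,\,0{<}t{<}1)$: on $[0,N^b]$ with $b<1$ the numbers of arrivals and of departures at node~$0$ are $\mathcal{O}(N^b)=o(N)$, so coupling $L_0^N$ with $N+\mathcal{N}_{\lambda_0}$ from above and $N-\mathcal{N}_{\mu_0}$ from below (as in the proof of Theorem~\ref{th: averaging} and of Proposition~1 in~\cite{RV}) shows that using $N$ in place of $L_0^N$ perturbs $W(L^N)$ by $\mathcal{O}(1/\log N)$, uniformly. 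The heuristic to be justified is then: as long as $L_j^N(N^t)$ is of order $N^t$ for every $j\geq 1$, the exponents~\eqref{def Y} satisfy $Y_0^N(N^t)\to 1$ and $Y_j^N(N^t)\to t$, hence $W(L^N(N^t))\to Jt/(1+Jt)$, so node~$j$ has net input rate $\phi_j(t):=\lambda_j-\mu_j\,Jt/(1+Jt)$; since $\rho_1<\cdots<\rho_J$, i.e. $\alpetu{1}<\cdots<\alpetu{J}$, one has $\phi_j(t)>0$ simultaneously for all $j$ precisely when $t<\alpetu{1}/J$, and the fluid identity $L_j^N(N^t)\sim f_j(t)N^t\Rightarrow f_j\equiv\phi_j$ is obtained exactly as in~\cite{RV}.

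To make this rigorous on a compact $[a,b]\subset(0,\alpetu{1}/J\wedge 1)$ I would argue in three steps. \emph{(i) Crude upper bound.} Each $L_j^N$, $j\geq1$, is dominated by its Poisson arrival process, so the ergodic theorem gives $\sup_{s\in[N^{a/2},N^b]}L_j^N(s)/s<2\lambda_j$ for all $j$ with probability tending to $1$; hence $Y_j^N(N^t)\leq t+\mathcal{O}(1/\log N)$ and $W(L^N(N^t))\leq Jt/(1+Jt)+\mathcal{O}(1/\log N)$ uniformly on $[a/2,b]$. \emph{(ii) Lower bound on the queues.} This bounds the departure rate of $L_j^N$ from above; since $b<\alpetu{1}/J\leq\alpetu{j}/J$, which implies $Jb/(1+Jb)<\rho_j$, the resulting net drift stays bounded below by a positive constant on $[N^{a},N^b]$, and comparison with a transient birth-and-death process plus the ergodic theorem yields $C_j>0$ with $\inf_{s\in[N^{a},N^b]}L_j^N(s)/s>C_j$ for all $j$, with high probability. \emph{(iii) Closing the loop.} On this event $Y_j^N(N^t)=t+\mathcal{O}(1/\log N)$ for all $j\geq1$ uniformly on $[a,b]$, so $W(L^N(N^t))=Jt/(1+Jt)+\mathcal{O}(1/\log N)$ uniformly; then for each small $\eps>0$ one sandwiches $L_j^N$ on $[N^{a},N^b]$ between two birth-and-death processes with up-rate $\lambda_j$ and down-rate $\mu_j\big(Jt/(1+Jt)\pm\eps\big)$, $t=\log s/\log N$, and — exactly as in the proof of Proposition~2 of~\cite{RV} — the scaled versions of these comparison processes converge uniformly to $\phi_j$, the $N^t$-scale fluctuations being negligible by Doob's inequality and Proposition~\ref{King}. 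Letting $\eps\downarrow0$ and then $a\downarrow0$, $b\uparrow\alpetu{1}/J\wedge1$ gives the convergence on compact subsets of $(0,\alpetu{1}/J\wedge1)$; when $\alpetu{1}/J>1$ one simply takes $b<1$ arbitrary.

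The only genuinely new feature compared with~\cite{RV}, and the point I expect to require the most care, is that the bootstrap between $W(L^N)$ and the whole vector $(L_j^N)_{1\leq j\leq J}$ must be carried out for all $J$ queues at once, with estimates uniform in $j$. This is manageable because the queues enter $W$ only through their common leading exponent $t$, and the single inequality $t<\alpetu{1}/J$ keeps every $\phi_j(t)$ bounded away from $0$ on compacts, so steps (i)--(iii) can be run with a common error term; everything else is a transcription of the $J{=}1$ argument.
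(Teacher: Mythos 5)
Your proposal is correct and takes essentially the same route as the paper: the paper omits the proof of Proposition~\ref{prop: general}, noting it is the $J$-queue analogue of Proposition~\ref{prop: phase 1}, whose proof is in turn declared identical to that of Proposition~2 of~\cite{RV}, and your three-step bootstrap (crude Poisson upper bound on each $L_j^N$, positive-drift lower bound via transient birth--death comparison using $t<\alpetu{1}/J\Rightarrow Jt/(1+Jt)<\rho_1\leq\rho_j$, then sandwiching by birth--death processes with up-rate $\lambda_j$ and down-rate $\mu_j(Jt/(1+Jt)\pm\eps)$) is precisely that argument written out for $J$ interacting queues. The one genuinely new point, that the single inequality $t<\alpetu{1}/J$ keeps every drift $\phi_j$ simultaneously bounded away from $0$ so the bootstrap closes with a common error term, is correctly identified.
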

Next, assuming that $\alpetu{1}/J<1$, once again there exists $\gamma\in (\alpetu{1}/J, (\alpetu{2}/J)\wedge 1)$ such that the infinitesimal drift of each of the queues with index between $2$ and $J$ remains positive up to time $N^{\gamma}$. As in Section~\ref{secsec}, this leads to the following theorem.
\begin{theorem}\label{th: general}
The convergence in distribution
\begin{multline*}
\lim_{N\rightarrow \infty} \bigg(\frac{L_1^N(N^t)}{N^{\alpetu{1} - (J-1)t}},\frac{L_2^N(N^t)}{N^t},\, \ldots\, , \frac{L_J^N(N^t)}{N^t}\bigg) \\
=  \bigg(\prod_{j=2}^J\frac{1}{ \mu_j(\rho_j{-}\rho_1)},\mu_2(\rho_2{-}\rho_1),\, \ldots\, , \mu_J(\rho_J{-}\rho_1)\bigg)
\end{multline*}
holds on the time interval $(\alpetu{1}/J, \alpetu{1}/(J{-}1)\wedge 1)$.
\end{theorem}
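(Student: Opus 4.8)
The plan is to follow, step by step, the three-phase argument used to prove Theorem~\ref{th: averaging} in the case $J=2$, the only genuinely new ingredient being the bookkeeping of the number $J$ of peripheral queues competing with node~$0$ inside the capacity $W$ of~\eqref{WL}. In the first phase, up to time $N^{\alpetu{1}/J}$, every queue $j\geq 1$ has a number of requests of order $N^t$ while $L_0^N\equiv N$, so $W(L^N(N^t))\simeq Jt/(1{+}Jt)$ and Proposition~\ref{prop: general} --- whose proof is that of Proposition~\ref{prop: phase 1} with $2$ replaced by $J$ --- applies; at $t=\alpetu{1}/J$ the drift $\lambda_1-\mu_1 Jt/(1{+}Jt)$ of queue~$1$ vanishes, while those of queues $2,\dots,J$ stay positive since $\rho_j>\rho_1$. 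I would then fix $\gamma\in(\alpetu{1}/J,\alpetu{2}/J\wedge 1)$ and prove the analogue of Lemma~\ref{lem: first bounds}: on $[N^{\alpetu{1}/J},N^\gamma]$, $L_j^N(s)/s$ stays between two positive constants for $j\geq 2$ (the crude bound $W(L^N(s))\leq J\gamma/(1{+}J\gamma)+o(1)$ keeps the drift of these queues positive), whereas $L_1^N(s)=\mathcal{O}(N^{\alpetu{1}/J})$, because once $L_1^N$ exceeds $N^{\alpetu{1}/J}$ the fact that $W$ is increasing in $L_1$ forces the death rate of queue~$1$ strictly above $\lambda_1$, so its excursions are controlled by Proposition~\ref{King}. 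Exactly as in Lemma~\ref{lem: uniform estimate}, these crude bounds yield the uniform invariance of the exponents, $\sup_t\big|\sum_{i=1}^J Y_i^N(N^t)-\alpetu{1}\big|\to 0$ in probability uniformly on compact subsets of $(\alpetu{1}/J,\gamma]$; in particular $W(L^N(N^t))\to \alpetu{1}/(1{+}\alpetu{1})=\rho_1$, so queue $j\geq 2$ has asymptotic drift $\lambda_j-\mu_j\rho_1=\mu_j(\rho_j-\rho_1)>0$.

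The second, averaging, phase reproduces Propositions~\ref{prop: phase 2} and~\ref{prop: averaging}. For the $L^2$-estimate of each $L_j^N$, $j\geq 2$, I would use a family of space-time harmonic functions in the spirit of~\eqref{approx F}, one for every $j\geq 2$, roughly of the form
\[
F_j(l,t)=\tfrac12\Big(\tfrac{l_j}{N^t}-\mu_j(\rho_j-\rho_1)\Big)^2-\tfrac{\mu_j}{\mu_1}\,\tfrac{l_1}{N^t}\Big(\tfrac{l_j}{N^t}-\mu_j(\rho_j-\rho_1)\Big),
\]
the cross term being designed so that $G^N F_j$ produces the clean negative term $-(\log N)\big(L_j^N/N^t-\mu_j(\rho_j-\rho_1)\big)^2$ together with remainder terms whose expectations have a time-integral bounded uniformly in $N$; the bound $L_1^N=\mathcal{O}(N^{\alpetu{1}/J})$, Cauchy--Schwarz and Proposition~\ref{King} (to control the excursions of $L_1^N$) are what make these remainders small enough. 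Gronwall's lemma then gives $\E\big((L_j^N(N^t)/N^t-\mu_j(\rho_j-\rho_1))^2\big)\to 0$, Doob's maximal inequality applied to the associated martingales upgrades this to uniform convergence on compacts of $(\alpetu{1}/J,\gamma]$, and a finite induction in $\gamma$, exactly as in the third step of the proof of Proposition~\ref{prop: phase 2}, extends it to all of $(\alpetu{1}/J,\alpetu{1}/(J{-}1)\wedge 1)$. Separately, I would prove as in Proposition~\ref{prop: averaging} the invariance of the full product, $L_1^N(N^t)L_2^N(N^t)\cdots L_J^N(N^t)/N^{\alpetu{1}}\to 1$, by covering $[N^{\alpetu{1}/J+\eta},N^{(\alpetu{1}/(J-1)\wedge 1)-\eta}]$ with short sub-intervals and reusing the excursion estimates behind Lemmas~\ref{lem: first excursion}--\ref{lem: initial condition}: given the slowly varying values of $L_2^N,\dots,L_J^N$, the queue $L_1^N$ relaxes towards $N^{\alpetu{1}}/(L_2^N\cdots L_J^N)$ in a time of order $N^{\alpetu{1}-(J-1)t}$, which is $o(N^t)$ precisely because $t>\alpetu{1}/J$, hence much faster than the other queues move. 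Combining the convergence of $(L_j^N(N^t)/N^t)_{2\leq j\leq J}$ towards $\mu_j(\rho_j-\rho_1)$ with this product invariance gives $L_1^N(N^t)/N^{\alpetu{1}-(J-1)t}\to\prod_{j=2}^J 1/(\mu_j(\rho_j-\rho_1))$, which is the last remaining coordinate.

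As in the case $J=2$, I expect the hard part to be the second phase: one has to design the family $(F_j)_{2\leq j\leq J}$ so that \emph{all} the cross corrections simultaneously decouple $L_1^N$ from $L_2^N,\dots,L_J^N$, and then check that the resulting generator remainders are uniformly of an order small enough for Gronwall to close. This is precisely where one establishes that the exponent of $L_1^N$ genuinely starts to decrease after time $N^{\alpetu{1}/J}$, flagged in the introduction as the most delicate technical point of the paper. A secondary difficulty is keeping the excursion estimates behind the product invariance summable over the $\mathcal{O}(N^{\alpetu{1}/J})$ sub-intervals while the relaxation scale $N^{\alpetu{1}-(J-1)t}$ itself varies along the interval.
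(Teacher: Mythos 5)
Your proposal is correct and follows essentially the same route as the paper: Proposition~\ref{prop: general} as the phase-one analogue of Proposition~\ref{prop: phase 1}, crude bounds and exponent-sum invariance $\sum_{i=1}^J Y_i^N(N^t)\to\alpetu{1}$ as in Lemmas~\ref{lem: first bounds}--\ref{lem: uniform estimate}, the family of functions $F_j$ you wrote down is exactly~\eqref{martingale function} used in the paper, Gronwall plus Doob upgrades the $L^2$-estimate for each $L_j^N/N^t$ to uniform convergence, and the product invariance $L_1^N L_2^N\cdots L_J^N(N^t)/N^{\alpetu{1}}\to 1$ via Proposition~\ref{prop: averaging}-type excursion estimates finishes the job. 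Your worry about having to ``simultaneously decouple'' all the cross corrections is slightly overcautious: the paper invokes the martingale problem for each $j\in\{2,\dots,J\}$ \emph{separately}, the single unstable coordinate $L_1^N$ supplies the same relaxation term in every $F_j$, and the remainders $C_{j,k}^N$ are all controlled by the one bound on $(L_1^N)$.
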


Finally, concerning the second phase $[N^{\alpetu{1}/(J-1)},+\infty)$ one has the following analogue  of Proposition~\ref{prop: phase 3}.
\begin{proposition}
  Under the condition $\alpetu{1}/(J-1)<1$, if
  \[
  \theta_0^N=\inf\{t>0: L_1^N(t)=0\},
  \]
  then
\begin{enumerate}
\item for $\eps>0$, one has
\[
\lim_{N\rightarrow \infty} \P\left(\theta_0^N\leq N^{\alpetu{1}/(J-1)}\log N + N^\eps \log N\right) = 1.
\]
\item For  $\eps \in (0,\alpetu{2}/(J-1)\wedge 1)$, one has
\begin{equation}\label{ref2}
\lim_{N\rightarrow \infty} \P\left(\sup_{s\in [\theta_0^N,N^{(\alpetu{2}/(J-1)\wedge 1) -2\eps}]} L_1^N(s) \leq (\log N)^2\right) = 1.
\end{equation}
\item The convergence of processes
\begin{multline}\label{ref3}
\lim_{N\rightarrow \infty}\bigg(\frac{L_2^N(N^t)}{N^t},\ldots,\frac{L_J^N(N^t)}{N^t}\bigg)\\ = \bigg(\lambda_2 - \mu_2\, \frac{(J-1)t}{1+(J-1)t}, \ldots, \lambda_J - \mu_J\, \frac{(J-1)t}{1+(J-1)t}\bigg)
\end{multline}
holds on the time interval $(\alpetu{1}/(J-1),\alpetu{2}/(J-1)\wedge 1)$.
\end{enumerate}
\end{proposition}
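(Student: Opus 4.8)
The plan is to reproduce, \emph{mutatis mutandis}, the three-node argument of Proposition~\ref{prop: phase 3}, everywhere replacing the exponent $\alpetu{1}$ by $\alpetu{1}/(J{-}1)$ and tracking the $J{-}1$ growing queues with indices $2,\dots,J$ in place of the single queue~$2$. The input is Theorem~\ref{th: general}: at ``time'' $N^{\alpetu{1}/(J-1)-\delta}$ (small $\delta>0$), $L_1^N$ is of order $N^{\alpetu{1}-(J-1)(\alpetu{1}/(J-1)-\delta)}=N^{(J-1)\delta}$, each $L_j^N$ with $j\geq 2$ is of order $N^{\alpetu{1}/(J-1)-\delta}$, and $L_0^N$ still sits at $N$. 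The elementary fact driving everything is that, with $L_1^N$ negligible in $\log$-scale and $\sum_{j=2}^J Y_j^N(N^t)\approx(J{-}1)t$, the service rate $\mu_1W(L^N(N^t))$ of queue~$1$ is approximately $\mu_1(J{-}1)t/(1+(J{-}1)t)$, which equals $\lambda_1$ at $t=\alpetu{1}/(J{-}1)$ (since $\mu_1\alpetu{1}/(1+\alpetu{1})=\lambda_1$) and strictly exceeds $\lambda_1$ for $t>\alpetu{1}/(J{-}1)$; hence queue~$1$ starts to drain after that time.

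The first step is the analogue of Lemma~\ref{lem: L1 vanishes}: for $\eps>0$ small there is $C_\eps>0$ with
\[
\lim_{N\to\infty}\P\Bigl(\sup_{s\in I_N}L_1^N(s)<N^\eps,\ \inf_{s\in I_N}\ \min_{2\leq j\leq J}\frac{L_j^N(s)}{s}\geq C_\eps\Bigr)=1,
\]
with $I_N=[N^{\alpetu{1}/(J-1)-\eps/2},N^{(\alpetu{2}/(J-1)\wedge 1)-2\eps}]$. As in Section~\ref{secsec}, this is a bootstrap through $\sigma_N=\inf\{s\geq N^{\alpetu{1}/(J-1)-\eps/2}:L_1^N(s)\geq N^\eps\}$, which by Theorem~\ref{th: general} satisfies $\sigma_N>N^{\alpetu{1}/(J-1)-\eta}$ for $\eta<\eps$: on $[N^{\alpetu{1}/(J-1)-\eps/2},\sigma_N\wedge N^{(\alpetu{2}/(J-1)\wedge 1)-2\eps}]$ each $L_j^N$, $j\geq 2$, is bounded below by a birth-and-death process whose drift stays positive as long as $\log s/\log N<\alpetu{j}/(J{-}1)-\eps$, so $L_j^N(s)/s\geq C_\eps$ there by the ergodic theorem for Poisson processes; plugging this lower bound back into the rate of queue~$1$ shows its death rate exceeds $\lambda_1+C'\eps$, and Proposition~\ref{King}b) pushes $\sigma_N$ past the right endpoint of $I_N$. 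Given this crude bound, part~1) copies a) of Proposition~\ref{prop: phase 3}: on $[N^{\alpetu{1}/(J-1)}\log N,\,N^{(\alpetu{2}/(J-1)\wedge 1)-2\eps}]$, $L_1^N$ is dominated by a reflected birth-and-death process started at~$0$, reflected at $-N^\eps$, with death rate $\lambda_1+C\log\log N/\log N$, and a standard random-walk hitting-time estimate (drift $\times$ time $\asymp(\log\log N/\log N)\cdot N^\eps\log N=N^\eps\log\log N\gg N^\eps$) shows it reaches $-N^\eps$, i.e.\ $L_1^N$ hits~$0$, within an extra $N^\eps\log N$ units of time with probability $\to1$. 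Part~2) copies b) of Proposition~\ref{prop: phase 3}: after $\theta_0^N$ restart the dominating process at~$0$ reflected at~$0$, use Proposition~\ref{King}b) to exclude reaching $(\log N)^2/\sqrt{\log\log N}$ before $N^{\alpetu{1}/(J-1)+\eps}$, then use the now genuinely negative drift $-C\eps$ available for $s\geq N^{\alpetu{1}/(J-1)+\eps}$ together with Proposition~\ref{King}b) once more to cap $\sup L_1^N$ by $(\log N)^2$ over the rest of the interval.

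For part~3), knowing $L_1^N(s)\leq(\log N)^2$ on $[N^{\alpetu{1}/(J-1)+\eps},N^{(\alpetu{2}/(J-1)\wedge 1)-\eps}]$ makes $\log(1+L_1^N)/\log N$ of order $\log\log N/\log N$, so queue~$1$ can be dropped from $W(L^N)$; the network then evolves, on the time scale $(N^t)$, like a star network with $J{-}1$ peripheral nodes $2,\dots,J$ and central node~$0$ still at level~$N$, with the ``initial condition'' at time $N^{\alpetu{1}/(J-1)}$ being $L_j^N\sim\mu_j(\rho_j{-}\rho_1)N^{\alpetu{1}/(J-1)}$ (which matches continuously the endpoint value in Theorem~\ref{th: general}). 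Running the phase-one analysis of Proposition~\ref{prop: general} with $J$ replaced by $J{-}1$ — equivalently the argument of Proposition~2 of~\cite{RV}, based on the averaging of the fast drift $\log N\,[\lambda_j-\mu_j W(L^N(N^t))-L_j^N(N^t)/N^t]$ of $L_j^N(N^t)/N^t$ — yields $L_j^N(N^t)/N^t\to\lambda_j-\mu_j(J{-}1)t/(1+(J{-}1)t)$ on $(\alpetu{1}/(J-1),\alpetu{2}/(J-1)\wedge 1)$, which is Relation~\eqref{ref3}. The main obstacle is the bootstrap estimate above: queue~$1$ staying below $N^\eps$ and the other $J{-}1$ queues staying of order $N^t$ are controls that feed on each other through $W(L^N)$, and the margin is thin—near time $N^{\alpetu{1}/(J-1)}\log N$ the negative drift of queue~$1$ is only of order $\log\log N/\log N$, so one cannot afford any slack in the lower bounds on $\sum_{j\geq2}Y_j^N$, and one needs the death rate of queue~$1$ to be uniformly $\geq\lambda_1+C\log\log N/\log N$ (not merely $\lambda_1+o(1)$) for the random-walk estimate to reach~$0$ within the claimed $N^\eps\log N$ window. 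Everything else is a faithful transcription of the three-node proofs; the recurrence on $j$ announced before the statement then reduces the whole network to the two-node case of~\cite{RV}.
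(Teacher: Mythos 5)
Your approach is the one the paper intends: transpose the three steps of the proof of Proposition~\ref{prop: phase 3} to the $J$-node setting, using Theorem~\ref{th: general} as input where the three-node proof used Theorem~\ref{th: averaging} and tracking queues $2,\dots,J$ collectively in place of queue~$2$ alone. This matches the paper, which gives no separate proof here and says only that its proof ``consists in using or slightly adapting the arguments'' of Section~\ref{secsec}. The organisation of parts~1)--3) is right, your identification of the thin $\log\log N/\log N$ margin in the drift of queue~$1$ near $t{=}\alpetu{1}/(J{-}1)$ is the correct point to worry about, and the part~3) reduction to a $(J{-}1)$-peripheral network with the matched initial condition is fine.

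There is, however, a genuine quantitative error in the bootstrap lemma you formulate, and it matters precisely when $J\geq 3$. You put $I_N=[N^{\alpetu{1}/(J-1)-\eps/2},N^{(\alpetu{2}/(J-1)\wedge 1)-2\eps}]$ and claim $\sup_{s\in I_N}L_1^N(s)<N^\eps$ with high probability. But Theorem~\ref{th: general} gives $L_1^N(N^t)\sim c\,N^{\alpetu{1}-(J-1)t}$, so at $t=\alpetu{1}/(J{-}1)-\eps/2$ one has $L_1^N\sim c\,N^{(J-1)\eps/2}$, which for $J\geq 3$ is at least of order $N^\eps$; the event you need thus has probability tending to $0$, not $1$, and the stopping time $\sigma_N$ coincides with the left endpoint of $I_N$, degenerating the coupling. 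For the same reason your assertion that plugging the lower bound on $L_j^N$, $j\geq 2$, back into $W$ makes the death rate of queue~$1$ ``exceed $\lambda_1+C'\eps$'' fails at the left edge of your interval: at level $L_1^N=N^{\eps/2}$ and $s=N^{\alpetu{1}/(J-1)-\eps/2}$, the peripheral contribution to $W$ is roughly $\eps/2+(J{-}1)\big(\alpetu{1}/(J{-}1)-\eps/2\big)=\alpetu{1}-(J{-}2)\eps/2<\alpetu{1}$, so the service rate of queue~$1$ is \emph{below} $\lambda_1$ when $J\geq3$. The source of the error is that the exponent $Y_1^N(N^t)=\alpetu{1}-(J{-}1)t+o(1)$ moves $J{-}1$ times as fast as $t$: the three-node offsets $\eps/2$ (for the lower bounds on the growing queues) and $\eps/4$ (for the dominating reflected walk of queue~$1$) must therefore be shrunk by a factor $J{-}1$, e.g.\ replaced by $\eps/(2(J{-}1))$ and $\eps/(4(J{-}1))$. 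With that adjustment, which is internal to the auxiliary lemma and does not alter the statements of parts~1)--3), your argument is complete.
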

Consequently, Relations~\eqref{ref2} and~\eqref{ref3} tell us that from time $N^{\alpetu{1}/(J-1)}$ on,  queue~$1$ does not contribute to the quantity $W(L^N)$. One is thus  left with a network with  $J$ nodes indexed by $0$, $2$, \ldots,~$J$ and starting from the state
\begin{align*}
L_0^N \sim N \text{ and } L_j^N \sim \mu_j(\rho_j -\rho_1) N^{\alpetu{1}/(J-1)},  \quad 2\leq j\leq J.
\end{align*}
Under the condition $\alpetu{2}/(J{-}1)< 1$, at time $N^{\alpetu{2}/(J-1)}$ the infinitesimal drift of $L_2^N$ cancels while the  infinitesimal drifts of the processes $(L_3^N(t))$, \ldots, $(L_J^N(t))$ remain positive for some time. Consequently, the processes $(L_3^N(N^t))$, $\ldots$, $(L_J^N(N^t))$  grow proportionally to $N^t$. At the same time, the product $(L_2^N \cdots L_J^N(N^t))$ remains close to $N^{\alpetu{2}}$, and so $(L_2^N(N^t))$ decreases like $N^{\alpetu{2}-(J-2)t}$. As before, once $(L_2^N(t))$ has reached $0$, it remains below $(\log N)^2$ with probability tending to $1$.  From time $N^{\alpetu{2}/(J-2)}$ on, one is left with a system of $J{-}1$ queues, and so on.

As mentioned earlier, the following theorem describes the most interesting case, in which only queues $0$ and $J$ are non trivial on the fluid time scale. Its proof is similar to that of Theorem~\ref{th: fluid limit} and is therefore omitted.
\begin{theorem}\label{th: fluid general}
Under the condition  $\alpetu{J}{<}1$, the convergence in distribution
\[
\lim_{N\rightarrow \infty}\bigg(\frac{L_0^N(Nt)}{N}, \frac{L_1^N(Nt)}{(\log N)^3}, \ldots, \frac{L_{J-1}^N(Nt)}{(\log N)^3}, \frac{L_J^N(Nt)}{N^{\alpetu{J}}} \bigg) = \big(\gamma_0(t),0, \ldots, 0, \gamma_0(t)^{\alpetu{J}}\big)
\]
holds on the time interval $(0,+\infty)$ with $\gamma_0(t){=} (1{+}\mu_0(\rho_0 {+}\rho_J {-} 1)t)^+$.
\end{theorem}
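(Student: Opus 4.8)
The proof follows the same three-stage architecture as that of Theorem~\ref{th: fluid limit}, the endgame being essentially case~\ref{th2d}) with queue~$1$ replaced by the whole block of queues $1,\dots,J{-}1$ (all of which become negligible) and queue~$2$ replaced by queue~$J$. Since $\alpetu{J}<1$ and $\rho_1<\cdots<\rho_J$, one has $\alpetu{j}/(J{-}j)\le\alpetu{j}<1$ for every $1\le j\le J{-}1$, so $\kappa=J$ and the whole cascade of growth and decrease phases described after the statement of Proposition~\ref{prop: general} --- including the Ornstein--Uhlenbeck stabilisation of $L_J^N$ around $N^{\alpetu{J}}$, which happens on the time scale $N^{\alpetu{J}}\log N=o(N)$ --- is completed before the fluid time scale begins.

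\textbf{Stage 1: state of the network at the end of the time scale $(N^t,\,t<1)$.} Apply Proposition~\ref{prop: general}, Theorem~\ref{th: general} and the proposition analogous to Proposition~\ref{prop: phase 3} to the full network; then re-apply the same three results to the sub-network on nodes $\{0,2,\dots,J\}$ started at time $N^{\alpetu{1}/(J-1)}$ with worker queues already at a polynomial level; and iterate over $j=1,\dots,J{-}1$. One obtains that queue $j$ reaches $0$ near time $N^{\alpetu{j}/(J-j)}$ and stays below $(\log N)^2$ afterwards, and that at the end of the cascade the network has reduced to the two-node system $\{0,J\}$, for which the first two regimes of Section~\ref{ss: two nodes} (with $\rho_1$ replaced by $\rho_J$) give the equilibration of $L_J^N$ around $N^{\alpetu{J}}$. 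Hence, for small $\eps>0$ and with probability tending to~$1$, one has $L_0^N(N^{1-\eps})\sim N$, $L_J^N(N^{1-\eps})\sim N^{\alpetu{J}}$, and $L_j^N(N^{1-\eps})\le(\log N)^2$ for $1\le j\le J{-}1$.

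\textbf{Stage 2: reduction to the two-node system and its fluid limit.} Coupling $L_0^N$ with its arrival and departure processes as in the proof of Proposition~1 of~\cite{RV} (Relation~\eqref{encadrement}-type bounds), one extends the conclusion of Stage~1 to time $\eta N$ for $\eta>0$ small; at that time $L_J^N$ is already past its stabilisation, so $L_J^N(\eta N)/N^{\alpetu{J}}\to1$. On $[\eta N,\infty)$, a queue of size $O((\log N)^2)$ contributes only $O(\log\log N)=o(\log N)$ to the logarithms in~\eqref{WL}, so the pair $(L_0^N,L_J^N)$ evolves, up to $o(1)$ in the allocated capacities, like the two-node system of~\cite{RV} started from $(N,\lfloor N^{\alpetu{J}}\rfloor)$. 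Sandwiching $(L_0^N(\eta N+Nt),L_J^N(\eta N+Nt))$ between the solutions of the two-node SDE started from the extremal values $(N(1{\pm}\eps),N^{\alpetu{J}}(1{\pm}\eps))$ and invoking Theorem~3 of~\cite{RV} (the fluid limit of the two-node network), one gets $\bigl(L_0^N(Nt)/N,\ L_J^N(Nt)/N^{\alpetu{J}}\bigr)\to(\gamma_0(t),\gamma_0(t)^{\alpetu{J}})$, since $1+(\lambda_0-\mu_0(1-\rho_J))t=1+\mu_0(\rho_0{+}\rho_J{-}1)t$.

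\textbf{Stage 3: the small queues, and the main obstacle.} As long as $L_0^N$ is of order $N$ and $L_J^N$ is at least a fixed power of $N$ --- which covers all $t>0$ when $\rho_0{+}\rho_J\ge1$, and $t<t_0=1/(\mu_0(1{-}\rho_0{-}\rho_J))$ otherwise --- one has $W(L^N)\to\alpetu{J}/(1{+}\alpetu{J})=\rho_J$, so each $j\in\{1,\dots,J{-}1\}$, when nonempty, is served at rate $\mu_jW(L^N)\to\mu_j\rho_J>\mu_j\rho_j=\lambda_j$; domination by a subcritical reflected birth-and-death process and Proposition~\ref{King}b) then give $\sup_{s\le TN}L_j^N(s)=O(\log N)$, well within the $(\log N)^3$ normalisation (the fluid-scale version of part b) of Proposition~\ref{prop: phase 3}). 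When $\rho_0{+}\rho_J<1$ and $t>t_0$ one uses instead that $L_0^N(Nt)/N\to0$ from the two-node fluid limit: this forces $W(L^N)\to1$, the drifts of $L_J^N$ and of the $L_j^N$ become negative, and the transience/stability arguments of Section~7 of~\cite{RV} together with the ergodicity of $(L_j,0\le j\le J)$ show that $L_J^N(Nt)/N^{\alpetu{J}}\to0$ and the small queues drain. Granting the three phase results quoted above and the two-node results of~\cite{RV}, the genuinely delicate point is the induction of Stage~1: one must check that each removal leaves the remaining worker queues at a polynomial level with exactly the right exponent so that the phase results can be re-applied to the smaller network with a shifted initial time and non-zero initial data, and that a removed queue stays $O((\log N)^2)$ --- hence negligible in $W(L^N)$ --- not only up to $t=1$ but uniformly over all the subsequent, longer, time intervals up to the fluid scale. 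Once this bookkeeping is set up, Stages~2 and~3 are routine adaptations of the corresponding parts of the proof of Theorem~\ref{th: fluid limit}.
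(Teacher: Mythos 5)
Your proposal is correct and follows essentially the same route the paper intends: it runs the cascade of Proposition~\ref{prop: general}, Theorem~\ref{th: general}, and the analogue of Proposition~\ref{prop: phase 3} to eliminate queues $1,\dots,J{-}1$ one by one on the time scale $(N^t)$ (using that $\alpetu{j}/(J{-}j)<\alpetu{J}<1$ so the whole cascade finishes before $t=1$), reduces to the two-node system $\{0,J\}$, and then invokes the fluid limit of~\cite{RV} together with the coupling/sandwiching argument from the proof of Theorem~\ref{th: fluid limit}. The paper simply states that ``its proof is similar to that of Theorem~\ref{th: fluid limit} and is therefore omitted,'' and your write-up supplies exactly the bookkeeping that omission conceals.
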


Before formulating the most general result that can be obtained in the case $J\geq 2$, let us give the main modifications to the proof of Theorem~\ref{th: averaging}  required to prove Theorem~\ref{th: general}. 
\begin{proof}[Sketch of the proof of Theorem~\ref{th: general}] The analogue of  the function $F(\cdot)$   of the proof of Proposition~\ref{prop: phase 2} is given by the functions
\begin{equation}\label{martingale function}
F_j(\ell,t) = \frac{1}{2}\bigg(\frac{l_j}{N^t} {-} \mu_j(\rho_j {-} \rho_1)\bigg)^2 {-} \frac{\mu_j}{\mu_1}\, \frac{l_1}{N^t}\, \bigg(\frac{l_j}{N^t} {-} \mu_j(\rho_j{-}\rho_1)\bigg),
\end{equation}
for $j\geq 2$, $\ell=(l_1,\ldots,l_J)\in \N^J$ and $t\geq 0$. The infinitesimal generator $G^N$ of the Markov process $(L_1(N^t),\ldots, L_J(N^t),t)$ applied to this function $F_j$ yields
\begin{multline*}
G^N(F_j)(\ell, t) = -(\log N) \bigg(\frac{l_j}{N^t}{-}\mu_j(\rho_j{-}\rho_1)\bigg)^2 {+} C_{j,1}^N(\ell, t)\,\frac{l_1l_j}{N^{2t}}\, \log N \\  +  C_{j,2}^N(\ell, t)\,\frac{l_1}{N^t}\, \log N +  C_{j,3}^N(\ell, t)\,\frac{\log N}{N^t},
\end{multline*}
where the functions $C_{j,1}^N(\cdot)$, $C_{j,2}^N(\cdot)$ and $C_{j,3}^N(\cdot)$ are bounded by some constant $K>0$, uniformly in their arguments and in $N\geq 1$. Using the corresponding martingale problem for each $j\in \{2,\ldots,J\}$ separately, the same arguments as in the proof of Proposition~\ref{prop: phase 2} carry over and lead to the uniform convergence of each coordinate. From this, it is straightforward to conclude.
\end{proof}
One concludes by gathering some of the results of the paper into the following theorem. It is restricted to the time interval where the central node is still in the fluid scale regime, i.e. of the order of $N$. The quantity $\kappa$ defined in this theorem is  related to the number of nodes which can be removed without changing the behaviour of the other nodes on the fluid time scale.

If the central node~$0$ becomes empty, the formulation of the results after that instant is not difficult. It corresponds to the case where the central node and a subset of the other nodes are at equilibrium, in the sense that their numbers of requests is $o((\log N)^3)$ on a finite time interval on the fluid time scale.  Analogous results can be stated  when the initial state $L^N(0)$ given by Relation~\eqref{initstate}  is changed in the following way:
\[
L^N(0){=}(L^N_j(0))= N\cdot (\ell_0,\ldots,\ell_J)+o(N),
\]
where $(\ell_j)\in\R_+^{J+1}$ and $\ell_0{+}\cdots{+}\ell_J{=}1$.

\newpage
\begin{theorem}[Convergence on the Fluid Time Scale]\label{th: fluid general2}
Suppose that $\rho_1{<}\rho_2{<}\cdots{<}\rho_J$, recall that
  \[
\alpetu{j}{=}\frac{\rho_j}{1{-}\rho_j},  
\]
and let, for $1\leq j\leq J$,
\[
\beta_j^*\stackrel{\text{\rm def.}}{=}\frac{\alpetu{j}}{J{-}j}, \qquad \kappa \stackrel{\text{\rm def.}}{=} \sup\left\{k: \frac{\alpetu{k}}{J{-}k{+}1}<1\right\}
\]
with the convention that $\sup(\emptyset){=}0$.
Condition~(C) is  that either $\kappa{=}0$ or that  $1{\leq}\kappa{<}J$ and $\beta_\kappa^*{<}1$.

For $j{\geq}1$ and $t{\geq}0$, define
\begin{equation*}\label{gammaf}
(\gamma_0(t), \gamma_j(t)){=}
    \begin{cases}
   \displaystyle   \left(1{+}\mu_0\left(\!\rho_0{-}\frac{1}{J{-}\kappa{+}1}\right)t,  \mu_j\left(\!\rho_j{-}\frac{J{-}\kappa}{J{-}\kappa{+}1}\right)t\right) \text{ if  (C) holds }\\
  \displaystyle    \left(1+\mu_0\left(\rho_0 +\rho_{\kappa}-1 \right)t, \mu_j\left(\rho_j{-}\rho_\kappa\right)t \right), \text{ otherwise,}
    \end{cases}
\end{equation*}
and  
\[
t_0\stackrel{\text{\rm def.}}{=}
\begin{cases}
  \displaystyle \frac{J{-}\kappa{+}1}{\mu_0(1{-}\rho_0(J{-}\kappa{+}1))^+} \text{ if  (C) holds, } \\
  \displaystyle  \frac{1}{\mu_0(1{-}\rho_0{-}\rho_\kappa)^+} \text{ otherwise.}
\end{cases}
\]

The following convergences in distribution of processes hold  on the time interval $(0,t_0)$
  \begin{enumerate}
  \item If $\kappa{=}0$, 
    \[
    \lim_{N\rightarrow \infty}\left(\frac{L_{0}^N(Nt)}{N}, \ldots,\frac{L_{J}^N(Nt)}{N}\right) = (\gamma_{0}(t),\gamma_{1}(t),\ldots,\gamma_J(t));
    \]
  \item In the case $1{\leq}\kappa{<}J$, there are two possible behaviours depending on $\beta_\kappa^*$,\medskip
    \begin{enumerate}
      \item If $\beta_\kappa^*{<}1$, 
\begin{multline*}
\lim_{N\rightarrow \infty}\left(\frac{L_{0}^N(Nt)}{N}, \frac{L_1^N(Nt)}{(\log N)^3}, \ldots, \frac{L_\kappa^N(Nt)}{(\log N)^3}, \frac{L_{\kappa+1}^N(Nt)}{N}, \ldots,\frac{L_{J}^N(Nt)}{N}\right)
\\= (\gamma_0(t),0^{(\kappa)},\gamma_{\kappa+1}(t),\ldots,\gamma_J(t)),
\end{multline*}
where $0^{(\kappa)}$ is the $\kappa$-th dimensional zero vector;\medskip
\item If $\beta_\kappa^*{>}1$,  
\begin{multline*}
\lim_{N\rightarrow \infty}\left(\!\frac{L_{0}^N(Nt)}{N}, \frac{L_1^N(Nt)}{(\log N)^3}, \ldots, \frac{L_{\kappa-1}^N(Nt)}{(\log N)^3}, \frac{L_{\kappa}^N(Nt)}{N^{\alpha_\kappa^*{-}(J{-}\kappa)}}, \frac{L_{\kappa+1}^N(Nt)}{N}, \ldots,\frac{L_{J}^N(Nt)}{N}\!\right)
\\= \left(\gamma_0(t),0^{(\kappa-1)},\frac{1}{{\gamma}_{\kappa+1}(t){\gamma}_{\kappa+2}(t)\cdots {\gamma}_J(t)},{\gamma}_{\kappa+1}(t),\ldots,{\gamma}_J(t)\right),
\end{multline*}
    \end{enumerate}
\item If $\kappa=J$, 
  \[
\lim_{N\rightarrow \infty}\bigg(\frac{L_0^N(Nt)}{N}, \frac{L_1^N(Nt)}{(\log N)^3}, \ldots, \frac{L_{J-1}^N(Nt)}{(\log N)^3}, \frac{L_J^N(Nt)}{N^{\alpetu{J}}} \bigg) = \big({\gamma}_0(t), 0^{(J{-}1)}, {\gamma}_0(t)^{\alpetu{J}}\big).
\]
  \end{enumerate}
\end{theorem}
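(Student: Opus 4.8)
The plan is to argue by induction on the number $J$ of peripheral nodes, the base cases $J{=}1$ (Robert and V\'eber~\cite{RV}, recalled in Section~\ref{ss: two nodes}) and $J{=}2$ (Theorem~\ref{th: fluid limit}) being already available. Assuming the statement for every network with at most $J{-}1$ peripheral nodes, the first step is to describe the state of the network at the ``end'' of the time scale $(N^t,\,t\in(0,1))$. Proposition~\ref{prop: general} gives that on $(0,\alpetu{1}/J\wedge 1)$ every peripheral coordinate grows like $N^t$; and if $\alpetu{1}/J<1$, Theorem~\ref{th: general} gives that on $(\alpetu{1}/J,\alpetu{1}/(J{-}1)\wedge 1)$ the coordinate $L_1^N(N^t)$ decreases like $N^{\alpetu{1}-(J-1)t}$ while the $L_j^N(N^t)$, $j\geq 2$, keep growing like $N^t$. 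Two cases then arise. If $\alpetu{1}/(J{-}1)\geq 1$, which forces $\kappa\in\{0,1\}$, no peripheral node is removed on this time scale: every $L_j^N(N^t)$ with $j\geq 2$ reaches order $N$ at $t{=}1$, and $L_1^N(N^t)$ reaches order $N$ (if $\kappa{=}0$) or order $N^{\alpetu{1}-(J-1)}$ (if $\kappa{=}1$, $\beta_1^*>1$). If instead $\alpetu{1}/(J{-}1)<1$, the analogue of Proposition~\ref{prop: phase 3} shows that from time $N^{\alpetu{1}/(J-1)}$ on, $L_1^N$ stays below $(\log N)^2$, hence no longer contributes to $W(L^N)$, and one is left with the $(J{-}1)$-peripheral network on $\{0,2,\ldots,J\}$.

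For this reduced network, started at real time $N^{\alpetu{1}/(J-1)}$ from a state in which nodes $2,\ldots,J$ are of the small order $N^{\alpetu{1}/(J-1)}$, Relation~\eqref{ref3} shows that on $(\alpetu{1}/(J{-}1),\alpetu{2}/(J{-}1)\wedge 1)$ the coordinates $L_j^N(N^s)/N^s$, $j\geq 2$, converge to exactly the limit that Proposition~\ref{prop: general} with $J$ replaced by $J{-}1$ predicts for a fresh start from $(N,0,\ldots,0)$. A coupling argument of the type used in Lemma~\ref{lem: first bounds} (bounding each small coordinate above by a birth-and-death process reflected at $0$ and below by a birth-and-death process with the correct drift) then shows that these small initial sizes are irrelevant for the subsequent behaviour on the time scale $(N^s,s\in(0,1))$, so the induction hypothesis applies to this $(J{-}1)$-node network. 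Its index $\kappa$ equals $\kappa{-}1$ for the original network and its $\beta^*_{\kappa-1}$ equals $\beta_\kappa^*$, node~$1$ joins the list of removed nodes with $L_1^N(N^t)=o((\log N)^3)$, and tracking the relabellings through cases~1, 2 and~3 of the induction hypothesis produces the asserted description of the network at time $t{=}1^-$.

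It remains to pass to the fluid time scale. As in the proof of Theorem~\ref{th: fluid limit}, Relation~\eqref{encadrement} ($L_0^N$ stays of order $N$ on $[0,\eta N]$) lets one extend the above $(N^t)$-scale descriptions to real times $\eta N$ for $\eta>0$ small, which is why the convergence holds on the \emph{open} interval $(0,t_0)$. Let $\mathcal A=\{0\}\cup\{j:j>\kappa\}$ be the set of coordinates of order $N$ at time $\eta N$; the removed nodes (namely $1,\ldots,\kappa$ if Condition~(C) holds and $1,\ldots,\kappa{-}1$ otherwise) are at most $(\log N)^2$, and when (C) fails the node $\kappa$ is of the intermediate order $N^{\alpetu{\kappa}-(J-\kappa)}$. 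Since $\log(1{+}L_j^N(\eta N{+}Nt))=\log N+\mathcal O(1)$ for $j\in\mathcal A$, $=\mathcal O(\log\log N)$ for the removed nodes, and $=(\alpetu{\kappa}-(J-\kappa))\log N+\mathcal O(1)$ for node $\kappa$ when (C) fails, Definition~\eqref{WL} gives that $W(L^N(\eta N{+}Nt))$ converges, uniformly on compact time sets, to $(J{-}\kappa)/(J{-}\kappa{+}1)$ when (C) holds and to $\rho_\kappa$ otherwise, as long as the coordinates in $\mathcal A$ remain of order $N$. Coupling $(L_j^N(\eta N{+}Nt))_{j\in\mathcal A}$ with the solutions of~\eqref{SDE1} started at the extremal values, exactly as in the proof of Theorem~\ref{th: fluid limit}, yields the convergence of these coordinates divided by $N$ to the affine functions obtained by integrating $\dot\gamma_0=\mu_0(\rho_0-(1{-}W))$ and $\dot\gamma_j=\mu_j(\rho_j-W)$ with $\gamma_0(0){=}1$, $\gamma_j(0){=}0$, i.e. the functions of the statement, valid up to the first zero $t_0$ of $\gamma_0$; the same coupling with Proposition~\ref{King} keeps the removed nodes below $(\log N)^3$ throughout $[0,t_0N)$. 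Finally, in case~2(b) the averaging result of Theorem~\ref{th: general} (the product $L_\kappa^N\cdots L_J^N$ stays close to $N^{\alpetu{\kappa}}$), extended to the fluid scale as in case~\ref{th2b}) of Theorem~\ref{th: fluid limit}, gives $L_\kappa^N(Nt)/N^{\alpetu{\kappa}-(J-\kappa)}\to 1/(\gamma_{\kappa+1}(t)\cdots\gamma_J(t))$, and in case~3 the two-node scaling relation of~\cite{RV} gives $L_J^N(Nt)/N^{\alpetu{J}}\to\gamma_0(t)^{\alpetu{J}}$.

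The hard part will be the reduction step: one must verify that removing node~$1$ (and inductively nodes $1,\ldots,\kappa$) is legitimate not only on the time scale $(N^t,t\in(0,1))$ but on the whole fluid window $[0,t_0N)$, i.e. that each such coordinate stays of order at most $(\log N)^3$ over a time interval of length $\Theta(N)$. This requires the uniform excursion estimates of Proposition~\ref{King} applied repeatedly, the point being that the drift of each removed node is negative (it is the least loaded among the nodes still active) and bounded away from $0$ on the negative side, of order $(\log\log N)/\log N$ initially and $\Theta(1)$ once its $L_j^N$ has grown. The second delicate point, already flagged, is the averaging estimate behind the coordinate $\kappa$ in case~2(b): its ``equilibrium'' $N^{\alpetu{\kappa}}/(L_{\kappa+1}^N\cdots L_J^N)$ is itself moving as the other coordinates grow, so one has to show the product tracks it uniformly over the whole fluid interval, which is carried out exactly as in the proof of Proposition~\ref{prop: averaging}.
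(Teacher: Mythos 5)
Your proposal is correct and takes essentially the same route as the paper: the paper's own treatment of Theorem~\ref{th: fluid general2} is the informal ``simple recurrence'' of removing the lightest active peripheral node at each step on the $(N^t,\,t\in(0,1))$ scale (via Proposition~\ref{prop: general}, Theorem~\ref{th: general}, and the general-$J$ analogue of Proposition~\ref{prop: phase 3}), followed by the passage to the fluid scale by coupling with the extremal initial values as in the proof of Theorem~\ref{th: fluid limit}. Your proposal formalizes exactly this recursion as an induction on $J$, verifies the relabelling $\kappa\mapsto\kappa{-}1$, $\beta^*_{\kappa}\mapsto\beta^*_{\kappa-1}$ under the reduction, and correctly computes the limiting value of $W$ (hence the affine $\gamma_j$ and $t_0$) as $(J{-}\kappa)/(J{-}\kappa{+}1)$ under Condition~(C) and $\rho_\kappa$ otherwise, matching the paper's statement.
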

\noindent
Note that,  by definition of $\kappa$, we have
\[
\frac{\alpetu{\kappa}}{J{-}\kappa {+}1}<1\leq \frac{\alpetu{\kappa{+}1}}{J{-}\kappa}\,\text{ and }
\beta_\kappa^*\in \left(\frac{\alpetu{\kappa}}{J{-}\kappa {+}1},\frac{\alpetu{\kappa{+}1}}{J{-}\kappa}\right).
 \]
 Cases 2a) or 2b) depend on $\beta_\kappa^*$ being before or after $1$ in the last time interval. Either the  queue with index $\kappa$ has the time to come back to $0$ on the time scale $(N^t, t{\in}(0,1))$, corresponding to case 2)a), or it does not, and this is case 2)b).
All  other results are direct consequences of Theorems~\ref{th: fluid limit} and~\ref{th: fluid general}.

\providecommand{\bysame}{\leavevmode\hbox to3em{\hrulefill}\thinspace}
\providecommand{\MR}{\relax\ifhmode\unskip\space\fi MR }
\providecommand{\MRhref}[2]{%
  \href{http://www.ams.org/mathscinet-getitem?mr=#1}{#2}
}
\providecommand{\href}[2]{#2}

\end{document}